\documentclass{article}

\usepackage[latin1]{inputenc}
\usepackage[T1]{fontenc}

\usepackage{bbm}
\usepackage{xypic}

\usepackage{amsmath}
\usepackage{amsfonts}
\usepackage{amssymb}
\usepackage{amsthm}
\usepackage{graphicx}
\usepackage{enumerate}


\newtheorem{theorem}{Theorem}

\newtheorem{corollary}[theorem]{Corollary}
\newtheorem{definition}[theorem]{Definition}
\newtheorem{lemma}[theorem]{Lemma}

\newtheorem{proposition}[theorem]{Proposition}
\newtheorem{remark}[theorem]{Remark}
\numberwithin{equation}{section}

\newcommand{\PP}{\mathbb{P}}
\newcommand{\PPP}{\hat{\mathbb{P}}}
\newcommand{\ppp}{\tilde{\mathbb{P}}}
\newcommand{\EE}{\mathbb{E}}
\newcommand{\EEE}{\hat{\mathbb{E}}}
\newcommand{\At}{\tilde{A}}
\newcommand{\Att}{\tilde{\tilde{A}}}
\newcommand{\Ab}{\bar{A}}

\newcommand{\Sj}{\tilde{\mathfrak{S}}_j}

\newcommand{\It}{\tilde{I}}


\begin{document}
\title{Near-critical percolation in two dimensions}
\author{Pierre Nolin}
\date{\'Ecole Normale Sup\'erieure and Universit\'e Paris-Sud}
\maketitle


\begin{abstract}
We give a self-contained and detailed presentation of Kesten's results that allow to relate critical and near-critical percolation on the triangular lattice. They constitute an important step in the derivation of the exponents describing the near-critical behavior of this model. For future use and reference, we also show how these results can be obtained in more general situations, and we state some new consequences.
\end{abstract}

\vspace{1.7cm}

\begin{center}
\includegraphics[width=12cm]{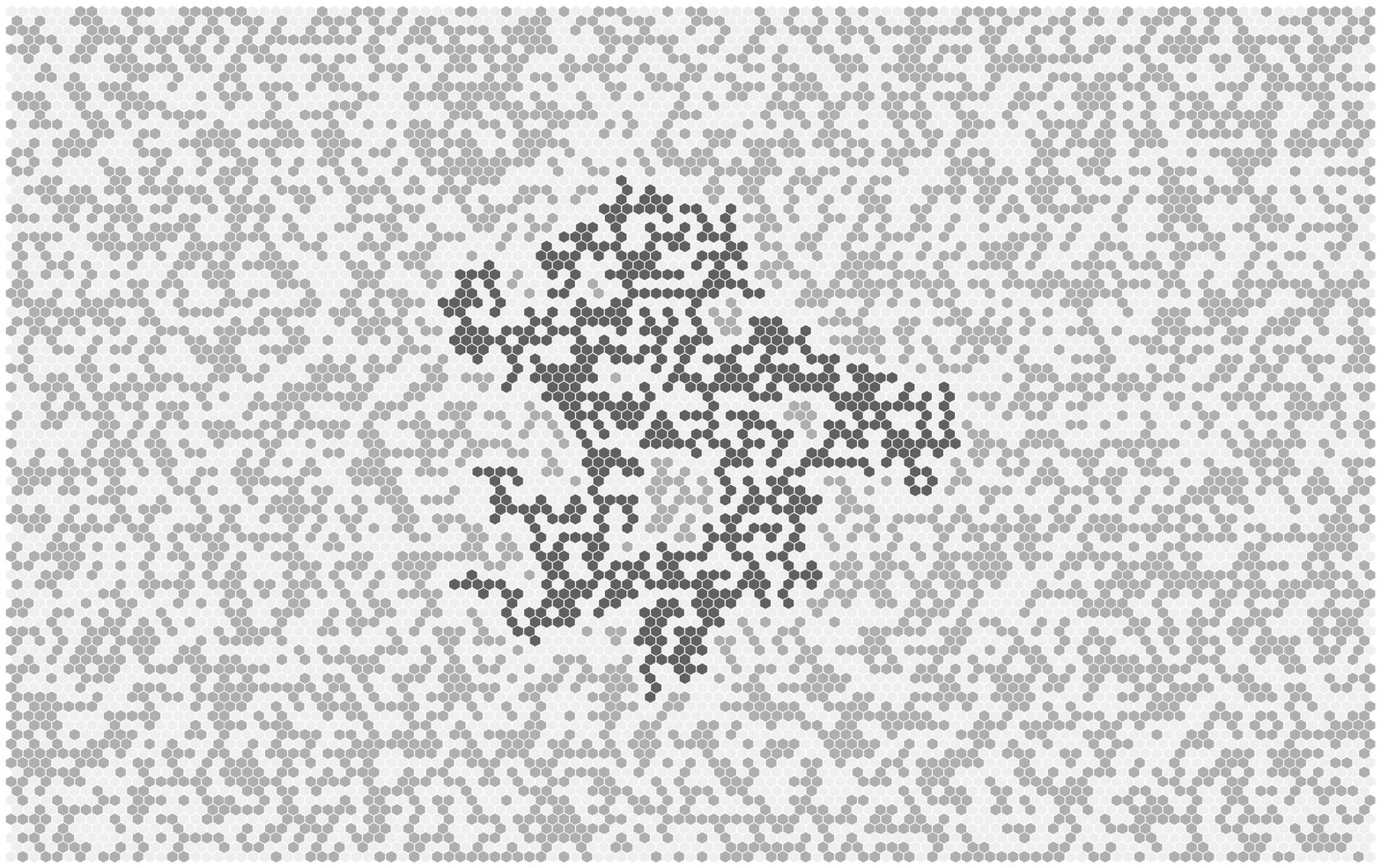}
\end{center}

\newpage

~

\vfill

D\'epartement de Math\'ematiques et Applications, \'Ecole Normale Sup\'erieure, 45 rue d'Ulm, 75230 Paris Cedex 05, France

\medskip

Laboratoire de Math\'ematiques, B\^atiment 425, Universit\'e Paris-Sud 11, 91405 Orsay Cedex, France

\bigskip
\bigskip
\bigskip

Research supported by the Agence Nationale pour la Recherche under the grant ANR-06-BLAN-0058.

\newpage

\tableofcontents

\newpage

~

\newpage

\section{Introduction}

Since 2000, substantial progress has been made on the mathematical understanding of percolation on the triangular lattice. In fact, it is fair to say that it is now well-understood. Recall that performing a percolation with parameter $p$ on a lattice means that each site is chosen independently to be black or white with probability $p$. We then look at the connectivity properties of the set of black sites (or the set of white ones). It is well-known that on the regular triangular lattice, when $p \leq 1/2$ there is almost surely no infinite black connected component, whereas when $p > 1/2$ there is almost surely one infinite black connected component. Its mean density can then be measured via the probability $\theta(p)$ that a given site belongs to this infinite black component.

Thanks to Smirnov's proof of conformal invariance of the percolation model at $p=1/2$ \cite{Sm}, allowing to prove that critical percolation interfaces converge toward $SLE_6$, and to the derivation of the $SLE_6$ critical exponents \cite{LSW1,LSW2} by Lawler, Schramm and Werner, it is possible to prove results concerning the behavior of the model when $p$ is exactly equal to $1/2$, that had been conjectured in the physics literature, such as the values of the arm exponents \cite{SmW,LSW4}. See e.g. \cite{W2} for a survey and references.

More than ten years before the above-mentioned papers, Kesten had shown in his 1987 paper \emph{Scaling relations for 2D-percolation} \cite{Ke4} that the behavior of percolation at criticality (\emph{ie} when $p=1/2$) and near criticality (\emph{ie} when $p$ is close to $1/2$) are very closely related. In particular, the exponents that describe the behavior of quantities such as $\theta(p)$ when $p \to 1/2^+$ and the arm exponents for percolation at $p=1/2$ are related via relations known as scaling (or hyperscaling) relations. At that time, it was not proved that any of the exponents existed (not to mention their actual value) and Kesten's paper explains how the knowledge of the existence and the values of some arm exponents allows to deduce the existence and the value of the exponents that describe ``near-critical'' behavior. Therefore, by combining this with the derivation of the arm exponents, we can for instance conclude \cite{SmW} that $\theta (p) = (p-1/2)^{5/36 + o(1)}$ as $p \to 1/2^+$.

Reading Kesten's paper in order to extract the statement that is needed to derive this result can turn out to be not so easy for a non-specialist, and the first goal of the present paper is to give a complete self-contained proof of Kesten's results that are used to describe near-critical percolation. We hope that this will be useful and help a wider community to have a clear and complete picture of this model.

It is also worth emphasizing that the proofs contain techniques (such as separation lemmas for arms) that are interesting in themselves and that can be applied to other situations. The second main purpose of the present paper is to state results in a more general setting than in \cite{Ke4}, for possible future use. In particular, we will see that the ``uniform estimates below the characteristic length'' hold  for an arbitrary number of arms and non-homogeneous percolation (see Theorem \ref{armsep} on separation of arms, and Theorem \ref{armnear} on arm events near criticality). Some technical difficulties arise due to these generalizations, but these new statements turn out to be useful. They are for instance instrumental in our study of gradient percolation in \cite{N1}. Other new statements in the present paper concern arms ``with defects'' or the fact that the finite-size scaling characteristic length $L_{\epsilon}(p)$ remains of the same order of magnitude when $\epsilon$ varies in $(0,1/2)$ (Corollary \ref{lengths}) -- and not only for $\epsilon$ small enough. This last fact is used in \cite{NW} to study the ``off-critical'' regime for percolation.

\section{Percolation background}

Before turning to near-critical percolation in the next section, we review some general notations and properties concerning percolation. We also sketch the proof of some of them, for which small generalizations will be needed. We assume the reader is familiar with the standard fare associated with percolation, and we refer to the classic references \cite{Ke_book,G_book} for more details.

\subsection{Notations}

\subsubsection*{Setting}

Unless otherwise stated, we will focus on site percolation in two dimensions, on the triangular lattice. This lattice will be denoted by $\mathbb{T} = (\mathbb{V}^T,\mathbb{E}^T)$, where $\mathbb{V}^T$ is the set of vertices (or ``sites''), and $\mathbb{E}^T$ is the set of edges (or ``bonds''), connecting adjacent sites. We restrict ourselves to this lattice because it is at present the only one for which the critical regime has been proved to be conformal invariant in the scaling limit.

The usual (homogeneous) site percolation process of parameter $p$ can be defined by taking the different sites to be \emph{black} (or occupied) with probability $p$, and \emph{white} (vacant) with probability $1-p$, independently of each other. This gives rise to a product probability measure on the set of configurations, which is referred to as $\PP_p$, the corresponding expectation being $\mathbb{E}_p$. We usually represent it as a random (black or white) coloring of the faces of the dual hexagonal lattice (see Figure \ref{hexagons}).

\begin{figure}
\begin{center}
\includegraphics[width=7cm]{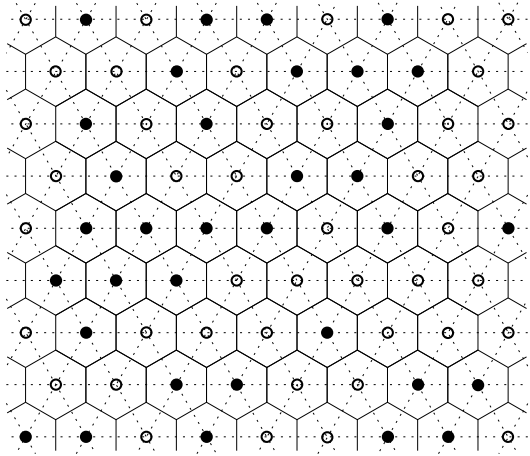}
\caption{Percolation on the triangular lattice can be viewed as a random coloring of the dual hexagonal lattice.}
\label{hexagons}
\end{center}
\end{figure}

More generally, we can associate to each family of parameters $\hat{p}=(\hat{p}_v)_v$ a product measure $\PPP$ for which each site $v$ is black with probability $\hat{p}_v$ and white with probability $1-\hat{p}_v$, independently of all other sites.

\subsubsection*{Coordinate system}

We sometimes use complex numbers to position points in the plane, but we mostly use oblique coordinates, with the origin in $0$ and the basis given by $1$ and $e^{i\pi/3}$, \emph{ie} we take the $x$--axis and its image under  rotation of angle $\pi/3$ (see Figure \ref{lattice}). For $a_1 \leq a_2$ and $b_1 \leq b_2$, the parallelogram $R$ of corners $a_j + b_k e^{i\pi/3}$ ($j,k = 1,2$) is thus denoted by $[a_1,a_2] \times [b_1,b_2]$, its interior being $\mathring{R} := ]a_1,a_2[ \times ]b_1,b_2[ = [a_1+1,a_2-1] \times [b_1+1,b_2-1]$ and its boundary $\partial R := R \setminus \mathring{R}$ the concatenation of the four boundary segments $\{a_i\} \times [b_1,b_2]$ and $[a_1,a_2] \times \{b_i\}$.

We denote by $\|z\|_{\infty}$ the infinity norm of a vertex $z$ as measured with respect to these two axes, and by $d$ the associated distance. For this norm, the set of points at distance at most $N$ from a site $z$ forms a rhombus $S_N(z)$ centered at this site and whose sides line up with the basis axes. Its boundary, the set of points at distance exactly $N$, is denoted by $\partial S_N(z)$, and its interior, the set of points at distance strictly less that $N$, by $\mathring{S}_N(z)$. To describe the percolation process, we often use $S_N := S_N(0)$ and call it the ``box of size $N$''. Note that it can also be written as $S_N=[-N,N] \times [-N,N]$. It will sometimes reveal useful to have noted that
\begin{equation}
|S_N(z)| \leq C_0 N^2
\end{equation}
for some universal constant $C_0$. For any two positive integers $n \leq N$, we also consider the annulus $S_{n,N}(z):=S_N(z) \setminus \mathring{S}_n(z)$, with the natural notation $S_{n,N} := S_{n,N}(0)$.

\begin{figure}
\begin{center}
\includegraphics[width=8.5cm]{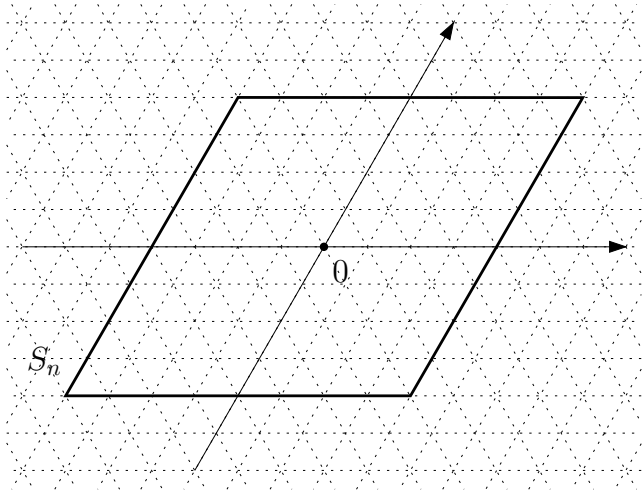}
\caption{We refer to oblique coordinates, and we denote by $S_n$ the ``box of size $n$''.}
\label{lattice}
\end{center}
\end{figure}

\subsubsection*{Connectivity properties}

Two sites $x$ and $y$ are said to be connected if there exists a black path, \emph{ie} a path consisting only of black sites, from $x$ to $y$. We denote it by $x \leadsto y$. Similarly, if there exists a white path from $x$ to $y$, these two sites are said to be $\ast$--connected, which we denote by $x \leadsto^{\ast} y$.

For notational convenience, we allow $y$ to be ``$\infty$'': we say that $x$ is connected to infinity ($x \leadsto \infty$) if there exists an infinite, self-avoiding and black path starting from $x$. We denote by
\begin{equation}
\theta(p) :=  \PP_p\big( 0 \leadsto \infty \big)
\end{equation}
the probability for $0$ (or any other site by translation invariance) to be connected to $\infty$.

To study the connectivity properties of a percolation realization, we often are interested in the connected components of black or white sites: the set of black sites connected to a site $x$ (empty if $x$ is white) is called the cluster of $x$, denoted by $C(x)$. We can define similarly $C^{\ast}(x)$ the white cluster of $x$. Note that $x \leadsto \infty$ is equivalent to the fact that $|C(x)|=\infty$.

If $A$ and $B$ are two sets of vertices, the notation 
$A \leadsto B$
is used to denote the event that some site in $A$ is connected to some site in $B$. If the connection is required to take place using exclusively the sites in some other set $C$, we write
$A \negthinspace \mathop{_{\ ^{\leadsto}\ }^{~~C}} \negthinspace B$.

\subsubsection*{Crossings}

A left-right (or horizontal) crossing of the parallelogram $[a_1,a_2] \times [b_1,b_2]$ is simply a black path connecting its left side to its right side. However, this definition implies that the existence of a crossing in two boxes sharing just a side are not completely independent: it will actually be more convenient to relax (by convention) the condition on its extremities. In other words, we request such a crossing path to be composed only of sites in $]a_1,a_2[ \times ]b_1,b_2[$ which are black, with the exception of its two extremities on the sides of the parallelogram, which can be either black or white. The existence of such a horizontal crossing is denoted by $\mathcal{C}_H([a_1,a_2] \times [b_1,b_2])$. We define likewise top-bottom (or vertical) crossings and denote their existence by $\mathcal{C}_V([a_1,a_2] \times [b_1,b_2])$, and also white crossings, the existence of which we denote by $\mathcal{C}^{\ast}_H$ and $\mathcal{C}^{\ast}_V$.

More generally, the same definition applies for crossings of annuli $S_{n,N}(z)$, from the internal boundary $\partial S_n(z)$ to the external one $\partial S_N(z)$, or even in more general domains $\mathcal{D}$, from one part of the boundary to another part.

\subsubsection*{Asymptotic behavior}

We use the standard notations to express that two quantities are asymptotically equivalent. For two positive functions $f$ and $g$, the notation $f \asymp g$ means that $f$ and $g$ remain of the same order of magnitude, in other words that there exist two positive and finite constants $C_1$ and $C_2$ such that $C_1 g \leq f \leq C_2 g$ (so that the ratio between $f$ and $g$ is bounded away from $0$ and $+ \infty$), while $f \approx g$ means that ${\log f} / {\log g} \to 1$ (``logarithmic equivalence'') -- either when $p \to 1/2$ or when $n \to \infty$, which will be clear from the context. This weaker equivalence is generally the one obtained for quantities behaving like power laws.

\subsection{General properties}

On the triangular lattice, it is known since \cite{Ke0} that percolation features a phase transition at $p=1/2$, called the \emph{critical point}: this means that
\begin{itemize}
\item[$\bullet$] When $p<1/2$, there is (a.s.) no infinite cluster (\emph{sub-critical} regime), or equivalently $\theta(p)=0$.
\item[$\bullet$] When $p>1/2$, there is (a.s.) an infinite cluster (\emph{super-critical} regime), or equivalently $\theta(p)>0$. Furthermore, the infinite cluster turns out to be \emph{unique} in this case.
\end{itemize}

In sub- and super-critical regimes, ``correlations'' decay very fast, this is the so-called \emph{exponential decay} property:
\begin{itemize}
\item[$\bullet$] For any $p<1/2$,
$$\exists C_1, C_2(p)>0, \quad \PP_p(0 \leadsto \partial S_n) \leq C_1 e^{-C_2(p) n}.$$
\item[$\bullet$] We can deduce from it that for any $p>1/2$,
\begin{align*}
\PP_p(0 \leadsto \partial S_n, |C(0)|<\infty) & \\
& \hspace{-2cm} \leq \PP_p(\text{$\exists$ white circuit surrounding $0$ and a site on $\partial S_n$})\\
& \hspace{-2cm} \leq C'_1 e^{-C'_2(p) n}
\end{align*}
for some $C'_1(p), C'_2(p) > 0$.
\end{itemize}

We would like to stress the fact that the speed at which these correlations vanish is governed by a constant $C_2$ which depends on $p$ -- it becomes slower and slower as $p$ approaches $1/2$. To study what happens near the critical point, we need to control this speed for different values of $p$: we will derive in Section \ref{expsection} an exponential decay property that is uniform in $p$.

The intermediate regime at $p=1/2$ is called the \emph{critical} regime. It is known for the triangular lattice that there is no infinite cluster at criticality: $\theta(1/2)=0$. Hence to summarize,
$$\theta(p) > 0 \quad \emph{iff} \quad p>1/2.$$
Correlations no longer decay exponentially fast in this critical  regime, but (as we will see) just like power laws. For instance, non trivial random scaling limits -- with fractal structures -- arise. This particular regime has some very strong symmetry property (conformal invariance) which allows to describe it very precisely.

\subsection{Some technical tools}

\subsubsection*{Monotone events}

We use the standard terminology associated with events: an event $A$ is increasing if ``it still holds when we add black sites'', and decreasing if it satisfies the same property, but when we add white sites.

Recall also the usual coupling of the percolation processes for different values of $p$: we associate to the different sites $x$ i.i.d. random variables $U_x$ uniform on $[0,1]$, and for any $p$, we obtain the measure $\PP_p$ by declaring each site $x$ to be black if $U_x \leq p$, and white otherwise. This coupling shows for instance that
$$p \mapsto \PP_p(A)$$
is a non-decreasing function of $p$ when $A$ is an increasing event. More generally, we can represent in this way any product measure $\PPP$.

\subsubsection*{Correlation inequalities}

The two most common inequalities for percolation concern monotone events: if $A$ and $B$ are increasing events, we have (\cite{BK,G_book})
\begin{itemize}
\item[1.] the Harris-FKG inequality:
$$\PP(A \cap B) \geq \PP(A) \PP(B).$$
\item[2.] the BK inequality:
$$\PP(A \circ B) \leq \PP(A) \PP(B),$$
$A \circ B$ meaning as usual that $A$ and $B$ occur ``disjointly''.
\end{itemize}

In the paper \cite{BK} where they proved the BK inequality, Van den Berg and Kesten also conjectured that this inequality holds in a more general fashion, for any pair of cylindrical events $A$ and $B$: if we define $A \square B$ the disjoint occurrence of $A$ and $B$ in this situation, we have
\begin{equation}
\PP(A \square B) \leq \PP(A) \PP(B).
\end{equation}
This was later proved by Reimer \cite{R}, and it is now known as Reimer's inequality. We will also use the following inequality:
\begin{equation}
\PP_{1/2}(A \circ B) \leq \PP_{1/2}(A \cap \tilde{B}),
\end{equation}
where $\tilde{B}$ denotes the ``opposite'' of $B$. This inequality is an intermediate step in Reimer's proof. On this subject, the reader can consult the nice review \cite{BCR}.

\subsubsection*{Russo's formula}

Russo's formula allows to study how probabilities of events vary when the percolation parameter $p$ varies. Recall that for an increasing event $A$, the event ``$v$ is pivotal for $A$'' is composed of the configurations $\omega$ such that if we make $v$ black, $A$ occurs, and if we make it white, $A$ does not occur. Note that by definition, this event is independent of the particular state of $v$. An analog definition applies for decreasing events.

\begin{theorem}[Russo's formula] \label{russo}
Let $A$ be an increasing event, depending only on the sites contained in some finite set $S$. Then
\begin{equation}
\frac{d}{dp} \PP_p(A) = \sum_{v \in S} \PP_p(\text{$v$ is pivotal for $A$}).
\end{equation}
\end{theorem}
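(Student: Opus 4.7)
The plan is to work in the non-homogeneous setting first: regard $\PP_p(A)$ as the restriction of the polynomial $\PPP(A)$, viewed as a function of $\hat p=(\hat p_v)_{v\in S}$, to the diagonal $\hat p_v=p$ for all $v$. Since $A$ depends only on the finite set of sites $S$, one has
\begin{equation*}
\PPP(A)=\sum_{\omega\in\{0,1\}^S}\mathbbm{1}_A(\omega)\prod_{v\in S}\hat p_v^{\,\omega_v}(1-\hat p_v)^{1-\omega_v},
\end{equation*}
which is a polynomial in the $\hat p_v$, so there are no analytic issues in differentiating. By the chain rule, $\tfrac{d}{dp}\PP_p(A)=\sum_{v\in S}\tfrac{\partial}{\partial \hat p_v}\PPP(A)\big|_{\hat p\equiv p}$, and it suffices to identify each partial derivative with the probability that $v$ is pivotal.

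For a fixed $v$, I would condition on the states of all sites other than $v$. Writing $B_v=\{v\text{ is black}\}$, independence gives
\begin{equation*}
\PPP(A)=\hat p_v\,\PPP(A\mid B_v)+(1-\hat p_v)\,\PPP(A\mid B_v^c),
\end{equation*}
and since the conditional probabilities do not depend on $\hat p_v$, we obtain $\tfrac{\partial}{\partial \hat p_v}\PPP(A)=\PPP(A\mid B_v)-\PPP(A\mid B_v^c)$.

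Now I would use that $A$ is increasing to rewrite this difference as the pivotality probability. Let $P_v=\{v\text{ is pivotal for }A\}$; by definition $P_v$ is measurable with respect to the sites other than $v$, hence independent of $B_v$. Decomposing $A=(A\cap P_v)\cup(A\cap P_v^c)$, on $P_v^c$ the occurrence of $A$ does not depend on the state of $v$, so $\PPP(A\cap P_v^c\mid B_v)=\PPP(A\cap P_v^c\mid B_v^c)$. On $P_v$, since $A$ is increasing, $A$ occurs if $v$ is black and does not occur if $v$ is white; therefore $\PPP(A\cap P_v\mid B_v)=\PPP(P_v)$ and $\PPP(A\cap P_v\mid B_v^c)=0$. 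Subtracting yields $\PPP(A\mid B_v)-\PPP(A\mid B_v^c)=\PPP(P_v)$.

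Combining the two steps and restricting to $\hat p\equiv p$ gives the formula. The only mild subtlety is the clean separation between the state of $v$ and the event $P_v$, and the use of monotonicity to collapse the difference of conditionals into a single pivotality probability; once this is in place, the summation over $v\in S$ is immediate from the chain rule. I do not expect any serious obstacle, as the finiteness of $S$ makes everything polynomial in $\hat p$.
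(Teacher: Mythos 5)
Your proof is correct and follows the same overall architecture as the paper: extend to independent site-parameters $\hat p$, observe that $\PPP(A)$ is polynomial, and apply the chain rule along the diagonal $\hat p\equiv p$. The only (minor) difference is in how the partial derivative $\partial_{\hat p_v}\PPP(A)$ is identified with the pivotality probability: the paper uses the standard monotone coupling to read off $\PPP^{+\epsilon}(A)-\PPP(A)=\epsilon\,\PPP(\text{$v$ pivotal})$ directly, whereas you condition on the state of $v$, write $\PPP(A)=\hat p_v\PPP(A\mid B_v)+(1-\hat p_v)\PPP(A\mid B_v^c)$, and then use the increasing property together with the decomposition $A=(A\cap P_v)\cup(A\cap P_v^c)$ to collapse the difference of conditional probabilities to $\PPP(P_v)$. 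These are two phrasings of the same calculation, and your version makes the role of monotonicity and the independence of $P_v$ from the state of $v$ more explicit; there is no gap.
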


We now quickly remind the reader how to prove this formula, since we will later (in Section \ref{nearcritical}) generalize it a little.
\begin{proof}
We allow the parameters $\hat{p}_v$ ($v \in S$) to vary independently, which amounts to consider the more general function $\mathcal{P}: \hat{p} = (\hat{p}_v)_{v \in S} \mapsto \PPP(A)$. This is clearly a smooth function (it is polynomial), and $\PP_p(A) = \mathcal{P}(p,\ldots, p)$. Now using the standard coupling, we see that for any site $w$, for a small variation $\epsilon>0$ in $w$,
\begin{equation}
\PPP^{+\epsilon}(A) - \PPP(A) = \epsilon \times \PPP(\text{$w$ is pivotal for $A$}),
\end{equation}
so that
$$\frac{\partial}{\partial \hat{p}_w} \PPP(A) = \PPP(\text{$w$ is pivotal for $A$}).$$
Russo's formula now follows readily by expressing $\frac{d}{dp} \PP_p(A)$ in terms of the previous partial derivatives:
\begin{align*}
\frac{d}{dp} \PP_p(A) & = \sum_{v \in S} \bigg( \frac{\partial}{\partial \hat{p}_v} \PPP(A) \bigg)_{\hat{p}= (p,\ldots,p)}\\
& = \sum_{v \in S} \PP_p(\text{$v$ is pivotal for $A$}).
\end{align*}
\end{proof}

\subsubsection*{Russo-Seymour-Welsh theory}

For symmetry reasons, we have:
\begin{equation}\label{sym}
\forall n, \quad \PP_{1/2}(\mathcal{C}_H([0,n] \times [0,n])) = 1/2.
\end{equation}
In other words, the probability of crossing a $n \times n$ box is the same on every scale. In particular, this probability is bounded from below: this is the starting point of the so-called Russo-Seymour-Welsh theory (see \cite{G_book,Ke_book}), that provides lower bounds for crossings in parallelograms of fixed aspect ratio $\tau \times 1$ ($\tau \geq 1$) in the ``hard direction''.

\begin{theorem}[Russo-Seymour-Welsh] \label{thm_RSW}
There exist universal non-decreasing functions $f_k(.)$ ($k \geq 2$), that stay positive on $(0,1)$ and verify: if for some parameter $p$ the probability of crossing a $n \times n$ box is at least $\delta_1$, then the probability of crossing a $k n \times n$ parallelogram is at least $f_k(\delta_1)$. Moreover, these functions can be chosen satisfying the additional property: $f_k(\delta) \to 1$ as $\delta \to 1$, with $f_k(1 - \epsilon) = 1 - C_k \epsilon^{\alpha_k} + o(\epsilon^{\alpha_k})$ for some $C_k, \alpha_k >0$.
\end{theorem}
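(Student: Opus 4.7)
The plan is to proceed in two stages: first the qualitative statement (existence and positivity of the $f_k$ on $(0,1)$) by induction on $k$, with the base case $k=2$ being the classical Russo--Seymour--Welsh argument; then the refined asymptotic near $\delta_1 = 1$ by a direct FKG gluing in the high-probability regime.

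For the base case, write $B = [0,n] \times [0,n]$. Following the classical Russo--Seymour--Welsh argument, one first establishes a ``half-crossing'' estimate: using the lattice symmetries of the triangular lattice (in particular the reflections through the two diagonals of $B$, which swap horizontal and vertical crossings and so preserve $\PP_p$) together with FKG, one shows that the event of a black path from one side of $B$ reaching a prescribed half-segment of the opposite side has probability at least $g(\delta_1)$ for some non-decreasing $g > 0$ on $(0,1)$. Gluing two such half-crossings by FKG yields a horizontal crossing of a parallelogram of aspect ratio $3/2$, which can be bootstrapped to aspect ratio $2$, producing $f_2$. The induction step $k \to k+1$ is the standard overlapping-box gluing: by FKG, a horizontal crossing of $[0,(k+1)n] \times [0,n]$ is implied by the simultaneous occurrence of a horizontal crossing of $[0,kn] \times [0,n]$ (probability $\geq f_k(\delta_1)$), a horizontal crossing of the translated parallelogram $[(k-1)n, (k+1)n] \times [0,n]$ (probability $\geq f_2(\delta_1)$ by translation-invariance of $\PP_p$), and a vertical crossing of the overlap $[(k-1)n, kn] \times [0,n]$ (probability $\geq \delta_1$ by the diagonal-reflection symmetry of the rhombus). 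Taking $f_{k+1}$ to be the product of these three bounds yields a non-decreasing function, positive on $(0,1)$.

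For the refined asymptotic $f_k(1 - \epsilon) = 1 - C_k \epsilon^{\alpha_k} + o(\epsilon^{\alpha_k})$, apply the same overlapping-box gluing but bound the probabilities directly. Covering $[0,kn] \times [0,n]$ by $O(k)$ overlapping $n \times n$ sub-boxes together with the associated connecting crossings, each contributing event has probability at least $1 - \epsilon$ (horizontal crossings by hypothesis, vertical crossings of the overlaps by diagonal-reflection symmetry of the rhombus). FKG then gives $f_k(1 - \epsilon) \geq (1 - \epsilon)^{O(k)} = 1 - C_k \epsilon + o(\epsilon)$, so one may take $\alpha_k = 1$ and $C_k$ linear in $k$. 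The main obstacle in the whole argument is the base case: the RSW-type symmetrization plus FKG must be arranged carefully on the triangular lattice so that the combined events are genuinely increasing on overlapping regions, and so that the resulting $g$ is strictly positive on all of $(0,1)$ rather than merely for $\delta_1$ close to $1$.
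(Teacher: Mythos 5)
The qualitative part of your argument (positivity of $f_k$ on $(0,1)$ via a base case at $k=2$ followed by FKG-gluing induction) is sound and essentially the route the paper takes; your recurrence $f_{k+1} \geq f_k\, f_2\, \delta_1$ reproduces exactly the paper's formula $f_k(\delta) = \delta^{k-2}(f_2(\delta))^{k-1}$. The paper's base case uses a construction on hexagons (which have more symmetries) together with repeated ``square-root tricks'' rather than the diagonal-reflection half-crossing argument you sketch, but both deliver some $f_2 > 0$ on $(0,1)$.

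The real gap is in the refined asymptotic near $\delta = 1$. Your covering of $[0,kn]\times[0,n]$ by $O(k)$ overlapping $n\times n$ sub-boxes cannot give $f_k(1-\epsilon) \geq (1-\epsilon)^{O(k)}$: the connecting events that force consecutive horizontal crossings to actually link up are \emph{not} crossings of $n\times n$ rhombi. If you shift the $n\times n$ sub-boxes by $n/2$, each overlap is an $\tfrac{n}{2}\times n$ region, and the top-to-bottom crossing you need of it is a crossing in the hard direction of an aspect-ratio-$2$ parallelogram --- precisely an RSW-type crossing with probability bounded below only by $f_2(\delta_1)$, not by $\delta_1$. If you instead use $2n\times n$ sub-boxes so that the overlaps are genuine $n\times n$ rhombi, the horizontal crossings of the sub-boxes themselves have probability only $\geq f_2(\delta_1)$, not $\geq \delta_1$. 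Either way the bound you can extract is $f_k(1-\epsilon) \geq (f_2(1-\epsilon))^{k-1}(1-\epsilon)^{k-2}$, so $\alpha_k = \alpha_2$, and everything hinges on the rate at which $f_2(1-\epsilon) \to 1$ --- which your argument never addresses. The paper's explicit construction gives $f_2(\delta) = (1-(1-\delta)^{1/2})^{2t}(1-(1-\tau(\delta))^{1/2})^{2t-1}$ with $\tau(\delta)=(1-(1-\delta)^{1/4})^2$; the nested square roots make the dominant term near $\delta=1$ of order $\epsilon^{1/8}$, so $f_2(1-\epsilon) = 1 - C\epsilon^{1/8} + o(\epsilon^{1/8})$ and $\alpha_2 = 1/8$. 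Your claimed $\alpha_k = 1$ (linear decay in $\epsilon$) does not follow from any argument you give, and would require a qualitatively sharper version of RSW than the one being proved.
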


If for instance $p>1/2$, we know that when $n$ gets very large, the probability $\delta_1$ of crossing a $n \times n$ rhombus becomes closer and closer to $1$, and the additional property tells that the probability of crossing a $k n \times n$ parallelogram tends to $1$ \emph{as a function of $\delta_1$}.

Combined with Eq.(\ref{sym}), the theorem entails:
\begin{corollary}
For each $k \geq 1$, there exists some $\delta_k > 0$ such that
\begin{equation}
\forall n, \quad \PP_{1/2}(\mathcal{C}_H([0,k n] \times [0,n])) \geq \delta_k.
\end{equation}
\end{corollary}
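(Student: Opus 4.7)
The plan is a direct chaining of the two ingredients just stated: the self-duality identity \eqref{sym} and Theorem \ref{thm_RSW}. First, for $k=1$ there is nothing to prove: \eqref{sym} already gives the bound with $\delta_1 = 1/2$ (independent of $n$).

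For $k \geq 2$, I would fix an arbitrary $n$ and apply Theorem \ref{thm_RSW} at parameter $p = 1/2$ and with input $\delta_1 := 1/2$. Since \eqref{sym} ensures that, at this parameter, the hypothesis of Theorem \ref{thm_RSW} is satisfied for \emph{this particular} $n$, the theorem yields
\begin{equation*}
\PP_{1/2}\big(\mathcal{C}_H([0,kn]\times[0,n])\big) \;\geq\; f_k(1/2).
\end{equation*}
Setting $\delta_k := f_k(1/2)$ finishes the proof, because Theorem \ref{thm_RSW} asserts that the functions $f_k$ stay strictly positive on $(0,1)$ and, crucially, do not depend on $n$ or on the parameter $p$. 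So the same constant $\delta_k$ works uniformly in $n$.

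There is no genuine obstacle: the only point worth double-checking is that the RSW bound is applied at the same scale $n$ on both sides (input hypothesis on $n\times n$, output conclusion on $kn\times n$), which matches the statement of Theorem \ref{thm_RSW}, and that \eqref{sym} supplies the required lower bound $1/2$ for every $n$ simultaneously. Both are built into the hypotheses, so the corollary is essentially a one-line consequence.
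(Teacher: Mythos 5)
Your proof is correct and matches the paper's (implicit) argument exactly: the paper states the corollary as a direct consequence of combining the symmetry identity $\PP_{1/2}(\mathcal{C}_H([0,n]\times[0,n]))=1/2$ with Theorem \ref{thm_RSW}, which is precisely what you do, taking $\delta_k = f_k(1/2)$ for $k\geq 2$ and $\delta_1 = 1/2$.
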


Note that only going from rhombi to parallelograms of aspect ratio slightly larger than $1$ is difficult. For instance, once the result is known for $2n \times n$ parallelograms, the construction of Figure \ref{RSW_construction} shows that we can take
\begin{equation} \label{RSW_k}
f_k(\delta) = \delta^{k-2} (f_2(\delta))^{k-1}.
\end{equation}

\begin{figure}
\begin{center}
\includegraphics[width=12cm]{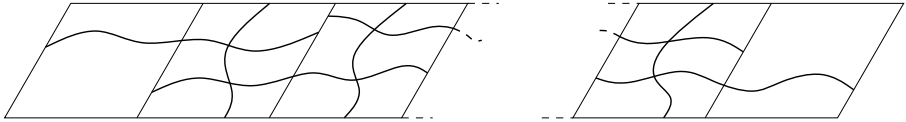}
\caption{\label{RSW_construction}This construction shows that we can take $f_k(\delta) = \delta^{k-2} (f_2(\delta))^{k-1}$.}
\end{center}
\end{figure}

\begin{proof}
The proof goes along the same lines as the one given by Grimmett \cite{G_book} for the square lattice. We briefly review it to indicate the small adaptations to be made on the triangular lattice. We hope Figure \ref{RSW_theory} will make things clear. For an account on RSW theory in a general setting, the reader should consult Chapter 6 of \cite{Ke_book}.

\begin{figure}
\begin{center}
\includegraphics[width=11cm]{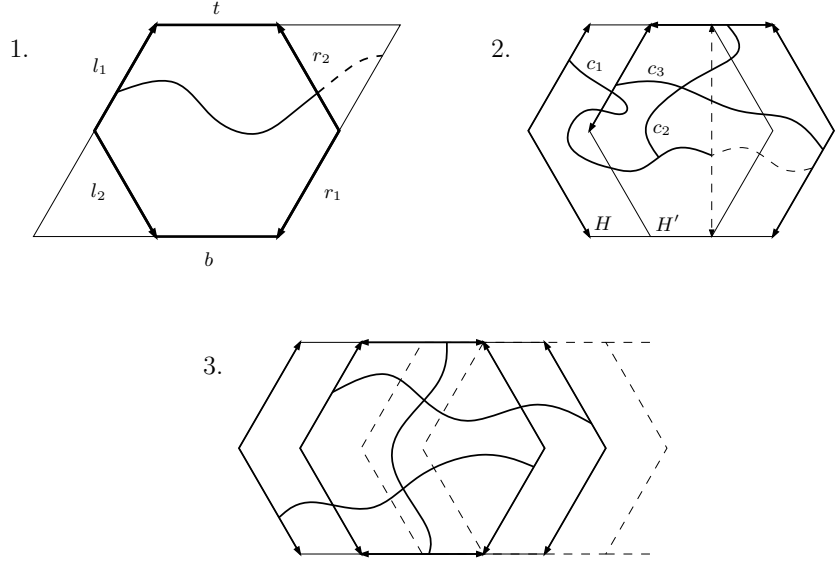}
\caption{\label{RSW_theory}Proof of the RSW theorem on the triangular lattice.}
\end{center}
\end{figure}

We work with hexagons, since they exhibit more symmetries. Note that a crossing of an $N \times N$ rhombus induces a left-right crossing of an hexagon of side length $N/2$ (see Figure \ref{RSW_theory}.1). We then apply the ``square-root trick'' -- used recurrently during the proof -- to the four events $\{i \leadsto j\}$: one of them occurs with probability at least $1 - (1-\delta)^{1/4}$. This implies that
\begin{equation}
\PP(l_1 \leadsto r_1) = \PP(l_2 \leadsto r_2) \geq (1 - (1-\delta)^{1/4})^2 =: \tau(\delta).
\end{equation}
(if $\PP(l_1 \leadsto r_1) = \PP(l_2 \leadsto r_2) \geq 1 - (1-\delta)^{1/4}$ we are OK, otherwise we just combine a crossing $l_1 \leadsto r_2$ and a crossing $l_2 \leadsto r_1$).

Now take two hexagons $H$, $H'$ as on Figure \ref{RSW_theory}.2 (with obvious notation for their sides). With probability at least $1 - (1-\delta)^{1/2}$ there exists a left-right crossing in $H$ whose last intersection with $l'_1 \cup l'_2$ is on $l'_2$. Assume this is the case, and condition on the lowest left-right crossing in $H$: with probability at least $1 - (1-\tau(\delta))^{1/2}$ it is connected to $t'$ in $H'$. We then use a crossing from $l'_1$ to $r'_1 \cup r'_2$, occurring with probability at least $1 - (1-\delta)^{1/2}$, to obtain
$$\PP(l_1 \cup l_2 \leadsto r'_1 \cup r'_2) \geq (1 - (1-\delta)^{1/2}) \times (1 - (1-\tau(\delta))^{1/2}) \times (1 - (1-\delta)^{1/2}).$$
The hard part is done: it now suffices to use $t$ successive ``longer hexagons'', and $t-1$ top-bottom crossings of regular hexagons, for $t$ large enough (see Figure \ref{RSW_theory}.3). We construct in such a way a left right-crossing of a $2N \times N$ parallelogram, with probability at least
\begin{equation}
f_2(\delta) := (1 - (1-\delta)^{1/2})^{2t} (1 - (1-\tau(\delta))^{1/2})^{2t - 1}.
\end{equation}
When $\delta$ tends to $1$, $\tau(\delta)$, and thus $f_2(\delta)$, also tend to $1$. Moreover, it is not hard to convince oneself that $f_2$ admits near $\delta = 1$ an asymptotic development of the form
\begin{equation}
f_2(1 - \epsilon) = 1 - C \epsilon^{1/8} + o(\epsilon^{1/8}).
\end{equation}
Eq.(\ref{RSW_k}) then provides the desired conclusion for any $k \geq 2$.
\end{proof}

\section{Near-critical percolation overview}

\subsection{Characteristic length}

We will use throughout the paper a certain ``characteristic length'' $L(p)$ defined in terms of crossing probabilities, or ``sponge-crossing probabilities''. This length is often convenient to work with, and it has been used in many papers concerning finite-size scaling, e.g. \cite {CCF,CCFS,BCKS1,BCKS2}.

Consider the rhombi $[0,n] \times [0,n]$. At $p=1/2$, $\PP_{p}(\mathcal{C}_H([0,n] \times [0,n])) = 1/2$. When $p<1/2$ (sub-critical regime), this probability tends to $0$ when $n$ goes to infinity, and it tends to $1$ when $p>1/2$ (super-critical regime). We introduce a quantity that measures the scale up to which these crossing probabilities remain bounded away from $0$ and $1$: for each $\epsilon_0 \in (0,1/2)$, we define
\begin{equation}
L_{\epsilon_0}(p)=\left\lbrace
\begin{array}{c l}
\min\{n \text{\: s.t. \:} \PP_p(\mathcal{C}_H([0,n] \times [0,n])) \leq \epsilon_0 \} & \text{when}\: p<1/2 \\[2mm]
\min\{n \text{\: s.t. \:} \PP_p(\mathcal{C}_H^*([0,n] \times [0,n])) \leq \epsilon_0 \} & \text{when}\: p>1/2 \\
\end{array} \right.
\end{equation}

Hence by definition,
\begin{equation}
\PP_p(\mathcal{C}_H([0,L_{\epsilon_0}(p)-1] \times [0,L_{\epsilon_0}(p)-1])) \geq \epsilon_0
\end{equation}
and
\begin{equation}
\PP_p(\mathcal{C}_H([0,L_{\epsilon_0}(p)] \times [0,L_{\epsilon_0}(p)])) \leq \epsilon_0
\end{equation}
if $p<1/2$, and the same with $\ast$'s if $p>1/2$.

Note that by symmetry, we also have directly $L_{\epsilon_0}(p)=L_{\epsilon_0}(1-p)$. Since $\PP_{1/2}(\mathcal{C}_H([0,n] \times [0,n]))$ is equal to $1/2$ on every scale, we will take the convention $L(1/2) = +\infty$, so that in the following, the expression ``for any $n \leq L(p)$'' must be interpreted as ``for any $n$'' when $p=1/2$. This convention is also consistent with the following property.

\begin{proposition}
For any fixed $\epsilon_0 \in (0,1/2)$, $L_{\epsilon_0}(p) \to + \infty$ when $p \to 1/2$.
\end{proposition}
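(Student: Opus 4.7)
The claim is that for any fixed $\epsilon_0 \in (0,1/2)$ and any fixed integer $N$, we have $L_{\epsilon_0}(p) > N$ for all $p$ sufficiently close to $1/2$. This is the natural reformulation: the characteristic length exceeds any prescribed threshold near criticality.

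My plan is to exploit the trivial observation that for a fixed parallelogram $[0,N]\times[0,N]$, the crossing event $\mathcal{C}_H([0,N]\times[0,N])$ depends only on the finitely many sites in this rhombus. Hence $p \mapsto \PP_p(\mathcal{C}_H([0,N]\times[0,N]))$ is a polynomial (in particular continuous) function of $p$. By the symmetry relation \eqref{sym}, its value at $p=1/2$ is exactly $1/2$, which is strictly greater than $\epsilon_0$. Therefore there exists $\eta = \eta(N,\epsilon_0) > 0$ such that
\begin{equation}
\PP_p(\mathcal{C}_H([0,N]\times[0,N])) > \epsilon_0 \quad \text{for all } p \in (1/2 - \eta, 1/2).
\end{equation}
By the very definition of $L_{\epsilon_0}(p)$ for $p < 1/2$, this inequality forces $L_{\epsilon_0}(p) > N$ on that interval.

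For $p > 1/2$, I would use the self-duality of the triangular lattice: the observation that $L_{\epsilon_0}(p) = L_{\epsilon_0}(1-p)$ (which the author states just above as a consequence of the symmetry between black and white, i.e.\ $\PP_p(\mathcal{C}_H) = \PP_{1-p}(\mathcal{C}_H^{\ast})$ after a reflection of the rhombus) reduces this side to the subcritical case already treated. Combining both sides, for every $N$ we have $L_{\epsilon_0}(p) > N$ whenever $|p - 1/2| < \eta(N,\epsilon_0)$, which is exactly the statement $L_{\epsilon_0}(p) \to +\infty$ as $p \to 1/2$.

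There is no real obstacle here; the only point that one might worry about is whether the continuity argument really produces a uniform bound on one side of $1/2$ (as opposed to just at $p=1/2$), but polynomial continuity on the compact neighborhood $[1/2 - \eta, 1/2]$ takes care of this immediately. The entire argument is essentially a reformulation of ``at $p=1/2$ crossing probabilities are bounded away from $0$ and $1$ on every scale, and the crossing probability of a fixed box is continuous in $p$.''
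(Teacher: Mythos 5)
Your overall strategy is the same as the paper's: use that $p \mapsto \PP_p(\mathcal{C}_H([0,N]\times[0,N]))$ is a polynomial in $p$ for fixed $N$, hence continuous, and that it equals $1/2 > \epsilon_0$ at $p=1/2$; then reduce $p>1/2$ to $p<1/2$ via $L_{\epsilon_0}(p)=L_{\epsilon_0}(1-p)$. However, there is a genuine logical gap in the step ``this inequality forces $L_{\epsilon_0}(p) > N$.'' From the definition
$L_{\epsilon_0}(p) = \min\{n : \PP_p(\mathcal{C}_H([0,n]\times[0,n])) \leq \epsilon_0\}$
for $p<1/2$, the conclusion $L_{\epsilon_0}(p)>N$ is equivalent to $\PP_p(\mathcal{C}_H([0,n]\times[0,n])) > \epsilon_0$ \emph{for every} $n \le N$, whereas you only control the probability at scale $n=N$. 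Your inference silently assumes that $n \mapsto \PP_p(\mathcal{C}_H([0,n]\times[0,n]))$ is non-increasing in $n$ for subcritical $p$, which is neither a standard fact nor established in the paper, and it is not needed.

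The repair is trivial and keeps your approach intact. Either apply your continuity argument to each of the finitely many scales $n=1,\dots,N$ to get positive radii $\eta_1,\dots,\eta_N$, and take $\eta = \min_n \eta_n$, which yields $\PP_p(\mathcal{C}_H([0,n]\times[0,n])) > \epsilon_0$ for all $n\le N$ and all $p\in(1/2-\eta,1/2)$; or argue by contradiction as the paper does: if $L_{\epsilon_0}(p_k)$ were bounded along some $p_k\to 1/2$, one could extract a subsequence with $L_{\epsilon_0}(p_k) = N$ constant, and then the definition (used in its correct direction) gives $\PP_{p_k}(\mathcal{C}_H([0,N]\times[0,N])) \le \epsilon_0$, contradicting continuity at $p=1/2$. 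The paper's version of the argument only ever needs the implication ``$L=N \Rightarrow$ crossing probability at $N$ is $\le \epsilon_0$,'' which is immediate from the definition, whereas you used its (unproved) converse.
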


\begin{proof}
Was it not the case, we could find an integer $N$ and a sequence $p_k \to 1/2$, say $p_k < 1/2$, such that for each $k$, $L_{\epsilon_0}(p_k) = N$, which would imply
$$\PP_{p_k}(\mathcal{C}_H([0,N] \times [0,N])) \leq \epsilon_0.$$
This contradicts the fact that
$$\PP_{p_k}(\mathcal{C}_H([0,N] \times [0,N])) \to 1/2,$$
the function $p \mapsto \PP_{p}(\mathcal{C}_H([0,N] \times [0,N]))$ being continuous (it is polynomial in $p$).
\end{proof}

\subsection{Russo-Seymour-Welsh type estimates}

When studying near-critical percolation, we will have to consider product measures $\PPP$ more general than simply the measures $\PP_p$ ($p \in [0,1]$), with associated parameters $\hat{p}_v$ which are allowed to depend on the site $v$, but have to remain between $p$ and $1-p$. We will say that $\PPP$ is ``between $\PP_p$ and $\PP_{1-p}$''.

The Russo-Seymour-Welsh theory implies that for each $k \geq 1$, there exists some $\delta_k = \delta_k(\epsilon_0) > 0$ (depending only on $\epsilon_0$) such that for all $p$, $\PPP$ between $\PP_p$ and $\PP_{1-p}$,
\begin{equation}
\forall n \leq L_{\epsilon_0}(p), \quad \PPP(\mathcal{C}_H([0,k n] \times [0,n])) \geq \delta_k,
\end{equation}
and for symmetry reasons this bound is also valid for horizontal white crossings.

These estimates for crossing probabilities will be the basic building blocks on which most further considerations are built. They imply that when $n$ is not larger than $L(p)$, things can still be compared to critical percolation: roughly speaking, $L(p)$ is the scale up to which percolation can be considered as ``almost critical''.

In the other direction, we will see in Section \ref{expsection} that $L(p)$ is also the scale at which percolation starts to look sub- or super-critical. Assume for instance that $p>1/2$, we know that
$$\PP_p(\mathcal{C}_H([0,L_{\epsilon_0}(p)] \times [0,L_{\epsilon_0}(p)])) \geq 1 - \epsilon_0.$$
Then using RSW (Theorem \ref{thm_RSW}), we get that
$$\PP_p(\mathcal{C}_H([0,2 L_{\epsilon_0}(p)] \times [0,L_{\epsilon_0}(p)])) \geq 1 - \epsilon_1,$$
where $1-\epsilon_1 = f_2(1-\epsilon_0)$ can be made arbitrarily close to $1$ by taking $\epsilon_0$ sufficiently small. This will be useful in the proof of Lemma \ref{explem} (but actually only for it).

\subsection{Outline of the paper}

In the following, we fix some value of $\epsilon_0$ in $(0,1/2)$. For notational convenience, we forget about the dependence on $\epsilon_0$. We will see later (Corollary \ref{lengths}) that the particular choice of $\epsilon_0$ is actually not relevant, in the sense that for any two $\epsilon_0$, $\epsilon'_0$, the corresponding lengths are of the same order of magnitude.

In Section \ref{sec_separation} we define the so-called arm events. On a scale $L(p)$, the RSW property, which we know remains true, allows to derive separation results for these arms. Section \ref{sec_critical} is devoted to critical percolation, in particular how arm exponents -- describing the asymptotic behavior of arm events -- can be computed. In Section \ref{nearcritical} we study how arm events are affected when we make vary the parameter $p$: if we stay on a scale $L(p)$, the picture does not change too much. It can be used to describe the characteristic functions, which we do in Section \ref{charac}. Finally, Section \ref{sec_remarks} concludes the paper with some remarks and possible applications.

With the exception of this last section, the organization follows the implication between the different results: each section depends on the previous ones. A limited number of results can however be obtained directly, we will indicate it clearly when this is the case.

\section{Arm separation} \label{sec_separation}

We will see that when studying critical and near-critical percolation, certain exceptional events play a central role: the arm events, referring to the existence of some number of crossings (``arms'') of the annuli $S_{n,N}$ ($n < N$), the color of each crossing (black or white) being prescribed. These events are useful because they can be combined together, and they will prove to be instrumental for studying more complex events. Their asymptotic behavior can be described precisely using $SLE_6$ (see next section), allowing to derive further estimates, especially on the characteristic functions.

\subsection{Arm events}

Let us consider an integer $j \ge 1$. A color sequence $\sigma$ is a sequence $(\sigma_1,\ldots,\sigma_j)$ of ``black'' and ``white'' of length $j$. We use the letters ``$W$'' and ``$B$'' to encode the colors: the sequence $(\text{black},\text{white},\text{black})$ is thus denoted by ``$BWB$''. Only the cyclic order of the arms is relevant, and we identify two sequences if they are the same up to a cyclic permutation: for instance, the two sequences ``$BWBW$'' and ``$WBWB$'' are the same, but they are different from ``$BBWW$''. The resulting set is denoted by $\Sj$. For any color sequence $\sigma$, we also introduce $\tilde{\sigma} = (\tilde{\sigma}_1,\ldots,\tilde{\sigma}_j)$ the inverted sequence, where each color is replaced by its opposite.

For any two positive integers $n \leq N$, we define the event 
\begin{equation}
A_{j,\sigma}(n,N) := \{ \partial S_n \leadsto^{j,\sigma} \partial S_N\}
\end{equation}
that there exist $j$ disjoint monochromatic arms in the annulus $S_{n,N}$, whose colors are those prescribed by $\sigma$ (when taken in counterclockwise order). We denote such an ordered set of crossings by $\mathcal{C} = \{c_i\}_{1 \leq i \leq j}$, and we say it to be ``$\sigma$-colored''. Recall that by convention, we have relaxed the color prescription for the extremities of the $c_i$'s. Hence for $j=1$ and $\sigma=B$, $A_{j,\sigma}(0,N)$ just denotes the existence of a black path $0 \leadsto \partial S_N$.

\begin{figure}
\begin{center}
\includegraphics[width=10cm]{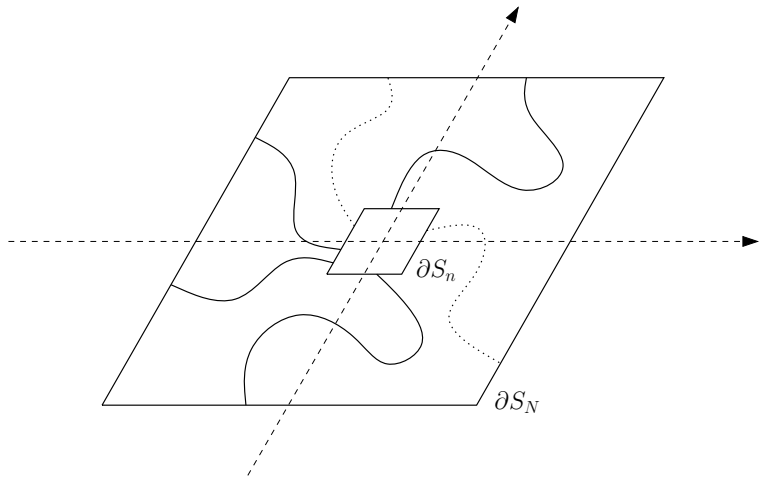}
\caption{\label{arms}The event $A_{6,\sigma}(n,N)$, with $\sigma=BBBWBW$.}
\end{center}
\end{figure}

Note that a combinatorial objection due to discreteness can arise: if $j$ is too large compared to $n$, the event $A_{j,\sigma}(n,N)$ can be void, just because the arms do not have enough space on $\partial S_{n}$ to arrive all together. For instance $A_{j,\sigma}(0,N)=\varnothing$ if $j \geq 7$. In fact, we just have to check that $n$ is large enough so that the number of sites touching the exterior of $|\partial S_{n}|$ (\emph{ie} $|\partial S_{n+1}|$ with the acute corners removed) is at least $j$: if this is true, we can then draw straight lines heading toward the exterior. For each positive integer $j$, we thus introduce $n_0(j)$ the least such nonnegative integer, and we have
$$\forall N \geq n_0(j), \quad A_{j,\sigma}(n_0(j),N) \neq \varnothing.$$
Note that $n_0(j)=0$ for $j=1,\ldots,6$, and that $n_0(j) \leq j$. For asymptotics, the exact choice of $n$ is not relevant since anyway, for any fixed $n_1, n_2 \geq n_0(j)$,
$$\PPP(A_{j,\sigma}(n_1,N)) \asymp \PPP(A_{j,\sigma}(n_2,N)).$$

\begin{remark}
Note that Reimer's inequality implies that for any two integers $j$, $j'$, and two color sequences $\sigma$, $\sigma'$ of these lengths, we have:
\begin{equation}
\label{csq_Reimer}
\PPP(A_{j+j',\sigma\sigma'}(n,N)) \leq \PPP(A_{j,\sigma}(n,N)) \PPP(A_{j',\sigma'}(n,N))
\end{equation}
for any $\PPP$, $n \leq N$ (denoting by $\sigma\sigma'$ the concatenation of $\sigma$ and $\sigma'$).
\end{remark}

\subsection{Well-separateness}

We now impose some restrictions on the events $A_{j,\sigma}(n,N)$. Our main goal is to prove that we can separate macroscopically (the extremities of) any sequence of arms: with this additional condition, the probability of $A_{j,\sigma}(n,N)$ does not decrease from more than a (universal) constant factor. This result is not really surprising, but we will need it recurrently for technical purposes.

Let us now give a precise meaning to the property of being ``separated'' for sets of crossings. In the following, we will actually consider crossings in different domain shapes. We first state the definition for a parallelogram of fixed ($1 \times \tau$) aspect ratio, and explain how to adapt it in other cases.

We first require that the extremities of these crossings are distant from each other. We also need to add a condition ensuring that the crossings can easily be extended: we impose the existence of ``free spaces'' at their extremities, which will allow then to construct longer extensions. This leads to the following definition, similar to Kesten's ``fences'' \cite{Ke4}.
\begin{definition}
Consider some $M \times \tau M$ parallelogram $R = [a_1,a_1+M] \times [b_1,b_1+\tau M]$, and $\mathcal{C} = \{c_i\}_{1 \leq i \leq j}$ a ($\sigma$-colored) set of $j$ disjoint left-right crossings. Introduce $z_i$ the extremity of $c_i$ on the right side of the parallelogram, and for some $\eta \in (0,1]$, the parallelogram $r_i = z_i + [0,\sqrt{\eta} M] \times [-\eta M,\eta M]$, attached to $R$ on its right side.

We say that $\mathcal{C}$ is \emph{well-separated} at scale $\eta$ (on the right side) if the two following conditions are satisfied:
\begin{enumerate}[1.]
\item The extremity $z_i$ of each crossing is not too close from the other ones:
\begin{equation}
\forall i \neq j, \quad \text{dist}(z_i,z_j) \geq 2 \sqrt{\eta} M,
\end{equation}
nor from the top and bottom right corners $Z_+, Z_-$ of $R$:
\begin{equation}
\forall i, \quad \text{dist}(z_i,Z_{\pm}) \geq 2 \sqrt{\eta} M.
\end{equation}

\item Each $r_i$ is crossed vertically by some crossing $\tilde{c}_i$ of the same color as $c_i$, and
\begin{equation}
c_i \leadsto \tilde{c}_i \quad  \text{in $\mathring{S}_{\sqrt{\eta} M}(z_i)$}.
\end{equation} 
\end{enumerate}
\end{definition}

\begin{figure}
\begin{center}
\includegraphics[width=7cm]{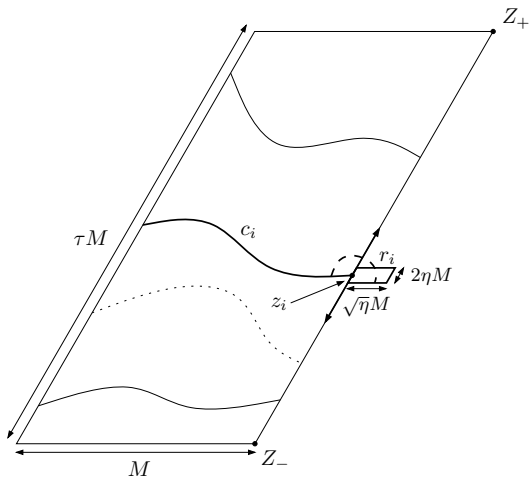}
\caption{Well-separateness for a set of crossings $\mathcal{C} = \{c_i\}$.}
\label{well_separateness}
\end{center}
\end{figure}

For the second condition, we of course require the path connecting $c_i$ and $\tilde{c}_i$ to be of the same color as these two crossings. The crossing $\tilde{c}_i$ is thus some small extension of $c_i$ on the right side of $R$. The free spaces $r_i$ will allow us to use locally an FKG-type inequality to further extend the $c_i$'s on the right.

\begin{definition}
We say that a set $\mathcal{C} = \{c_i\}_{1 \leq i \leq j}$ of $j$ disjoint left-right crossings of $R$ can be \emph{made well-separated} on the right side if there exists another set $\mathcal{C}' = \{c'_i\}_{1 \leq i \leq j}$ of $j$ disjoint crossings that is well-separated on the right side, such that $c'_i$ has the same color as $c_i$, and the same extremity on the left side.
\end{definition}

The same definitions apply for well-separateness on the left side, and also for top-bottom crossings. Consider now a set of crossings of an annulus $S_{n,N}$. We can divide this set into four subsets, according to the side of $\partial S_N$ on which they arrive. Take for instance the set of crossings arriving on the right side: we say it to be \emph{well-separated} if, as before, the extremities of these crossings on $\partial S_N$ are distant from each other and from the top-right and bottom-right corners, and if there exist free spaces to extend them. Then, we say that a set of crossings of $S_{n,N}$ is well-separated on the external boundary if each of the four previous sets is itself well-separated. Note that requiring the extremities to be not too close from the corners ensures that they are not too close from the extremities of the crossings arriving on other sides either. We take the same definition for the internal boundary $\partial S_n$: in this case, taking the extremities away from the corners also ensures that the free spaces are included in $S_n$ and do not intersect each other.

We are in position to define our first sub-event of $A_{j,\sigma}(n,N)$: for any $\eta, \eta' \in (0,1)$,
\begin{equation}
\At_{j,\sigma}^{\eta / \eta'}(n,N) := \{ \partial S_n \leadsto_{j,\sigma}^{\eta / \eta'} \partial S_N\}
\end{equation}
denotes the event $A_{j,\sigma}(n,N)$ with the additional condition that the set of $j$ arms is well-separated at scale $\eta$ on $\partial S_n$, and at scale $\eta'$ on $\partial S_N$.

\bigskip

We can even prescribe the ``landing areas'' of the different arms, \emph{ie} the position of their extremities. We introduce for that some last definition:
\begin{definition}
Consider $\partial S_N$ for some integer $N$: a \emph{landing sequence} $\{I_i\}_{1 \leq i \leq j}$ on $\partial S_N$ is a sequence of disjoint sub-intervals $I_1,\ldots,I_j$ on $\partial S_N$ in counterclockwise order. It is said to be $\eta$-separated if for each $i$\footnote{As usual, we consider cyclic indices, so that here for instance $I_{j+1} = I_1$.},
\begin{enumerate}[1.]
\item $\text{dist}(I_i,I_{i+1}) \geq 2 \sqrt{\eta} N$,
\item and $\text{dist}(I_i,Z_{\pm}) \geq 2 \sqrt{\eta} N$.

\noindent It is called a landing sequence of size $\eta$ if the additional property
\item $\text{length}(I_i) \geq \eta N$
\end{enumerate}
is also satisfied.
\end{definition}

We identify two landing sequences on $\partial S_N$ and $\partial S_{N'}$ if they are identical up to a dilation. This leads to the following sub-event of $\At_{j,\sigma}^{\eta / \eta'}(n,N)$: for two landing sequences $I=\{I_i\}_{1 \leq i \leq j}$ and $I'=\{I'_i\}_{1 \leq i' \leq j}$,
\begin{equation}
\Att_{j,\sigma}^{\eta,I / \eta',I'}(n,N) := \{ \partial S_n \leadsto_{j,\sigma}^{\eta,I / \eta',I'} \partial S_N\}
\end{equation}
denotes the event $\At_{j,\sigma}^{\eta / \eta'}(n,N)$, with the additional requirement on the set of crossings $\{c_i\}_{1 \leq i \leq j}$ that for each $i$, the extremities $z_i$ and $z'_i$ of $c_i$ on (respectively) $\partial S_n$ and $\partial S_N$ satisfy $z_i \in I_i$ and $z'_i \in I'_i$.

We will also have use for another intermediate event between $A$ and $\Att$: $\Ab_{j,\sigma}^{I/I'}(n,N)$, for which we impose the landing areas $I / I'$ of the different arms without requiring the existence of the free spaces. We do not ask \emph{a priori} the sub-intervals to be $\eta$-separated either, just to be disjoint. Note that if they are $\eta / \eta'$-separated, then the extremities of the different crossings will be $\eta / \eta'$-separated too.

To summarize:
\begin{displaymath}
\xymatrix{
& A_{j,\sigma}(n,N) = \{\text{$j$ arms $\partial S_n \leadsto \partial S_N$, color $\sigma$}\} \ar[dl]|{\text{separated at scale $\eta / \eta'$ + small extensions}} \ar[dr]|{\text{landing areas $I / I'$}} & \\
\At_{j,\sigma}^{\eta / \eta'}(n,N) \ar[dr]|{\text{landing areas $I / I'$}} & & \Ab_{j,\sigma}^{I / I'}(n,N) \ar[dl]|{\text{small extensions (if $I / I'$ are $\eta / \eta'$-separated)}} \\
& \Att_{j,\sigma}^{\eta,I / \eta',I'}(n,N) & }
\end{displaymath}

\begin{remark}
If we take for instance alternating colors ($\bar{\sigma} = BWBW$), and as landing areas $\bar{I}_1, \ldots ,\bar{I}_4$ the (resp.) right, top, left and bottom sides of $\partial S_N$, the $4$-arm event $\Ab_{4,\bar{\sigma}}^{./\bar{I}}(0,N)$ (the ``.'' meaning that we do not put any condition on the internal boundary) is then the event that $0$ is pivotal for the existence of a left-right crossing of $S_N$.
\end{remark}

\subsection{Statement of the results}

\subsubsection*{Main result}

Our main separation result is the following:

\begin{theorem} \label{armsep}
Fix an integer $j \geq 1$, some color sequence $\sigma \in \Sj$ and $\eta_0,\eta'_0 \in (0,1)$. Then we have
\begin{equation}
\PPP\big(\Att_{j,\sigma}^{\eta,I / \eta',I'}(n,N)\big) \asymp \PPP\big(A_{j,\sigma}(n,N)\big)
\end{equation}
uniformly in all landing sequences $I / I'$ of size $\eta / \eta'$, with $\eta \geq \eta_0$ and $\eta' \geq \eta'_0$, $p$, $\PPP$ between $\PP_p$ and $\PP_{1-p}$, $n \leq N \leq L(p)$.
\end{theorem}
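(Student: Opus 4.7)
Since $\Att_{j,\sigma}^{\eta,I/\eta',I'}(n,N) \subset A_{j,\sigma}(n,N)$ is immediate, only the lower bound $\PPP(\Att) \geq c \, \PPP(A)$ is at stake. I would split the problem into two stages: (i) upgrade $A$ to the well-separated event $\At_{j,\sigma}^{\eta_0/\eta'_0}(n,N)$ at some fixed scales $\eta_0,\eta'_0$ depending only on $j$ and $\sigma$; (ii) given a well-separated configuration, reroute the arms into the prescribed landing intervals $I$ and $I'$.

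\textbf{Stage 1: producing well-separateness.} The plan is to apply a ``one-step extension lemma'' at both boundaries. For the outer boundary: condition on the configuration inside $S_{N/2}$ and look at the outer shell $S_{N/2,N}$. Since $N \leq L(p)$, the RSW estimates of Section 3.2 hold uniformly in that shell: every monochromatic crossing event in a parallelogram of bounded aspect ratio has $\PPP$-probability at least some $\delta > 0$ depending only on $\epsilon_0$ (and the color, by the symmetric RSW bound for white crossings). Using FKG applied to monochromatic crossings lying in $j$ essentially disjoint angular sub-sectors of $S_{N/2,N}$ (each sub-sector oriented toward a different target arc on $\partial S_N$), I can construct extensions of the arms reaching $\partial S_{N/2}$ into $j$ arms of the correct colors reaching $j$ prescribed macroscopic target arcs on $\partial S_N$, together with the free spaces $r'_i$ required by the definition of well-separateness. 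This gives outer well-separateness conditionally on the inside, with a lower bound independent of that conditioning. The symmetric argument in $S_{n,2n}$ gives inner well-separateness at scale $\eta_0$. The cyclic order of the $\sigma$-arms is automatically preserved at each junction because arms of opposite color cannot cross in the plane.

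\textbf{Stage 2: prescribing landing intervals.} Start now from an $\eta_0/\eta'_0$-well-separated realization. Each arm $c_i$ emerges from its free space $r'_i$ centered at some point $z'_i \in \partial S_N$; the target interval $I'_i$ has length $\geq \eta' N$ and sits elsewhere on $\partial S_N$ at distance $O(N)$, on the same side of any other arm's free space in cyclic order. I would then build, for each $i$, a monochromatic path of color $\sigma_i$ running in a thin parallelogram along $\partial S_N$ from $r'_i$ to $I'_i$. By RSW at scales $\asymp N \leq L(p)$, each such crossing has probability $\geq \delta' > 0$. Since the $j$ parallelograms can be chosen disjoint (they live in a thin collar, one per angular sector containing one $z'_i$), FKG applied to the $j$ monochromatic extension events yields a joint lower bound. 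The analogous construction inside $S_{n,2n}$ takes care of the inner landing intervals $I_i$. Combined with Stage 1, this gives $\PPP(\Att) \geq c'' \PPP(A)$, as desired.

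\textbf{Main obstacle.} The heart of the argument is Stage 1, and the technical subtlety is to make the conditioning rigorous: one must ensure that the ``extension in $S_{N/2,N}$'' is carried out in a region that is, in a controlled sense, independent of the information used to check that the arms already reach $\partial S_{N/2}$. The standard device (following Kesten \cite{Ke4}) is to explore the arms from outside in (or inside out) along monochromatic interfaces, revealing as few sites as possible, and then to apply RSW and FKG to the unexplored region. One must also verify carefully that all constants ($\delta$, $\delta'$, $c$, $c''$) are truly uniform in $p$, in $\PPP$ between $\PP_p$ and $\PP_{1-p}$, and in the landing sequences $I, I'$ of sizes $\eta \geq \eta_0$, $\eta' \geq \eta'_0$; this uniformity rests entirely on the fact, recalled at the end of Section 3.2, that RSW bounds are available uniformly for all such $\PPP$ at every scale $\leq L(p)$.
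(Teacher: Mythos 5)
The main gap is in Stage 1. You propose a single-step extension: condition on the configuration inside $S_{N/2}$, then use RSW and FKG in the shell $S_{N/2,N}$ to steer the $j$ arms into $j$ prescribed macroscopic arcs equipped with free spaces. But the conditional success probability is not bounded below uniformly in the conditioning: the $j$ arms may emerge at $\partial S_{N/2}$ arbitrarily close together or clustered at a corner, and then there is simply no room in the shell to place $j$ disjoint angular sub-sectors each holding one arm endpoint and routing it to a distinct macroscopic target without the paths entangling. FKG (or independence of the sub-sectors) cannot rescue this, because the very geometry of the construction fails. This is precisely why the paper does not run a one-annulus argument. Its Lemma~\ref{well_sep} only asserts that, with probability $\geq 1-\delta$, the crossings in one U-shaped region \emph{can be made} $\eta(\delta)$-well-separated, where $\eta(\delta)\to 0$ as $\delta\to 0$; from that one cannot conclude directly, because making $\delta$ small ruins $\eta$. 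The missing ingredient is the multi-scale bootstrap: descend through dyadic annuli $S_{2^{i-1},2^i}$, retrying separation at each new scale when it fails, and once separated at some scale $2^{K-i'}$, climb back up at a \emph{fixed} macroscopic scale $\eta'_0$ at cost $C_0^{i'-1}$. The inclusion
\begin{equation*}
A_{j,\sigma}(2^k,2^K) \subseteq \At_{j,\sigma}^{./\eta'}(2^k,2^K) \cup \Big(\{\text{separation fails at scale $2^K$}\} \cap A_{j,\sigma}(2^k,2^{K-1})\Big)
\end{equation*}
together with the choice $4\delta C_0 < 1/2$ turns this into a convergent geometric series. Without this iteration there is no uniform lower bound, and your Stage 1 does not close.

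A secondary point: even the per-annulus separation step is more delicate than ``condition and apply RSW.'' The paper's Lemma~\ref{well_sep} first bounds the number of disjoint crossings via BK, protects the two corners $Z_\pm$ by circuits in concentric annuli, and then conditions iteratively on the lowest, second-lowest, \dots crossings — revealing only what lies below the current one — before applying RSW in concentric annuli around each exposed endpoint. Your ``main obstacle'' paragraph gestures at the right exploration idea, but it is embedded in the failed single-step scheme and does not produce a quantitative bound of the form needed by the recursion. Your Stage 2 (rerouting an already well-separated set of arms to prescribed $\eta$-separated landing intervals via a thin collar) is essentially correct and matches the extendability and landing content of Proposition~\ref{easy_prop}, so the defect is confined to Stage~1.
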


\subsubsection*{First relations}

Before turning to the proof of this theorem, we list some direct consequences of the RSW estimates that will be needed.

\begin{proposition} \label{easy_prop}
Fix $j \geq 1$, $\sigma \in \Sj$ and $\eta_0,\eta'_0 \in (0,1)$.

\begin{enumerate}
\item \emph{``Extendability''}: We have
$$\PPP\big(\Att_{j,\sigma}^{\eta,I / \tilde{\eta}',\It'}(n,2N)\big), \: \PPP\big(\Att_{j,\sigma}^{\tilde{\eta},\It / \eta',I'}(n/2,N)\big) \asymp \PPP\big(\Att_{j,\sigma}^{\eta,I / \eta',I'}(n,N)\big)$$
uniformly in $p$, $\PPP$ between $\PP_p$ and $\PP_{1-p}$, $n \leq N \leq L(p)$, and all landing sequences $I / I'$ (resp. $\It / \It'$) of size $\eta / \eta'$ (resp. $\tilde{\eta} / \tilde{\eta}'$) larger than $\eta_0 / \eta'_0$. In other words: ``once well-separated, the arms can easily be extended''. \label{extend}

\item \emph{``Quasi-multiplicativity''}: We have
$$\PPP(\Att_{j,\sigma}^{. / \eta,I_{\eta}}(n_1,n_2/4)) \PPP(\Att_{j,\sigma}^{\eta',I_{\eta'} / .}(n_2,n_3)) \asymp \PPP(A_{j,\sigma}(n_1,n_3))$$ 
uniformly in $p$, $\PPP$ between $\PP_p$ and $\PP_{1-p}$, $n_0(j) \leq n_1 < n_2 < n_3 \leq L(p)$ with $n_2 \geq 4 n_1$, and all landing sequences $I / I'$ of size $\eta / \eta'$ larger than $\eta_0 / \eta'_0$.
\label{quasi_mult2}

\item For any $\eta,\eta' >0$, there exists a constant $C = C(\eta,\eta')>0$ with the following property: for any $p$, $\PPP$ between $\PP_p$ and $\PP_{1-p}$, $n \leq N \leq L(p)$, there exist $I$ and $I'$ of size $\eta$ and $\eta'$ (they may depend on all the parameters mentioned) such that
$$\PPP\big(\Att_{j,\sigma}^{\eta,I / \eta',I'}(n,N)\big) \geq C \: \PPP\big(\At_{j,\sigma}^{\eta,\eta'}(n,N)\big).$$
\label{existI}

\end{enumerate}

\end{proposition}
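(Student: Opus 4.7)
All three assertions rest on the uniform RSW bounds below $L(p)$ combined with FKG. Part (3) is a pigeonhole over finitely many landing types, while parts (1)--(2) both exploit a common \emph{gluing principle}: whenever arms are well-separated and equipped with free spaces at some boundary, disjoint RSW-built crossings in an adjacent annular region can be concatenated via FKG to extend or join them without changing the probability by more than a uniform constant.

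For (3), the number of $\eta$-separated landing sequences of size $\eta$ on $\partial S_n$ is bounded by some $M_j(\eta)$ depending only on $\eta$ and $j$; likewise $M_j(\eta')$ on $\partial S_N$. On $\At^{\eta,\eta'}_{j,\sigma}(n,N)$ the arm extremities fall into \emph{some} such pair $(I_*, I'_*)$, so a straightforward pigeonhole yields a single pair $(I,I')$ with $\PPP\big(\Att^{\eta,I/\eta',I'}\big) \geq M_j(\eta)^{-1} M_j(\eta')^{-1} \PPP\big(\At^{\eta,\eta'}\big)$, giving (3) with $C=M_j(\eta)^{-1}M_j(\eta')^{-1}$.

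For (1), take the outward half and compare $\Att^{\eta, I/\eta', I'}(n,N)$ with $\Att^{\eta, I/\tilde\eta', \It'}(n,2N)$. On the former each arm $c_i$ carries a free space $r_i \subset S_N^c$ containing a color-matching crossing $\tilde c_i$ connected to $c_i$. Using $\eta' \geq \eta'_0$ and $\tilde\eta' \geq \tilde\eta'_0$, I carve out of $S_{N,2N}$ a family of $j$ pairwise-disjoint ``tubes'' $T_i$, each joining $r_i$ at its inner end to $\It'_i$ at its outer end together with room for a fresh free space attached at $\partial S_{2N}$. Inside $T_i$ a chain of parallelograms of aspect ratio bounded in terms of $\eta'_0, \tilde\eta'_0$ realizes, by iterated Theorem~\ref{thm_RSW} (applicable since $2N \leq 2L(p)$), a color-$\sigma_i$ crossing from $r_i$ to $\It'_i$ plus the new free space, each with probability at least a uniform $c(\eta'_0, \tilde\eta'_0, j) > 0$. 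All of these are monotone events on disjoint subregions of $S_{N,2N}$, so FKG gives their joint occurrence with uniformly positive probability; independence of $S_{N,2N}$ from $S_N$ then yields
\[
\PPP\big(\Att^{\eta, I/\tilde\eta', \It'}(n,2N)\big) \geq c' \cdot \PPP\big(\Att^{\eta, I/\eta', I'}(n,N)\big).
\]
The reverse inequality is proved by running the very same tube construction from the outside in, producing free spaces at $\partial S_N$ matching $(\eta',I')$ from those at $\partial S_{2N}$. The inner-extension half of (1) is proved verbatim in the annulus $S_{n/2, n}$.

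For (2), the lower bound is gluing in the bridging annulus $S_{n_2/4, n_2}$: given independent realizations of the two $\Att$-events---whose free spaces respectively point outward from $\partial S_{n_2/4}$ and inward from $\partial S_{n_2}$---$j$ disjoint tubes in $S_{n_2/4, n_2}$ link the outer free spaces of the inner event with the matching inner free spaces of the outer event via color-respecting RSW chains combined by FKG. The upper bound follows from independence on the two disjoint annuli $S_{n_1, n_2/4}$ and $S_{n_2, n_3}$, which gives $\PPP(A(n_1,n_3)) \leq \PPP(A(n_1, n_2/4))\, \PPP(A(n_2, n_3))$, and then each factor is bounded by the corresponding $\Att$-event up to a multiplicative constant by combining part (1) applied inward/outward (trading $A$ at one scale for $\Att$ at an adjacent scale) with the selection step of part (3). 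The main obstacle throughout is the tube layout: the $j$ disjoint tubes and their iterated RSW chains must respect the prescribed cyclic color sequence $\sigma$ for arbitrary $j$, and all bounds must be uniform in the landing sequences, which forces the tube geometry to depend only on the specified landings and separation scales and never on the rest of the percolation configuration.
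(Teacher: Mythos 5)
Your part (3) is correct and essentially matches the paper, which covers $\partial S_n$ (resp.\ $\partial S_N$) by $O(\eta^{-1})$ (resp.\ $O(\eta'^{-1})$) intervals and pigeonholes.

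For parts (1) and (2) your tube/RSW geometry is the right one, but the correlation step that glues the tube crossings onto the pre-existing $\Att$ event is not justified, and this is precisely where the paper has to do real work. Two problems. First, the claimed ``independence of $S_{N,2N}$ from $S_N$'' is false: the event $\Att_{j,\sigma}^{\eta,I/\eta',I'}(n,N)$ is \emph{not} measurable with respect to the configuration in $S_N$ --- its very definition requires the free spaces $r_i = z_i + [0,\sqrt{\eta'}N]\times[-\eta'N,\eta'N]$, which are attached to the \emph{outside} of $\partial S_N$ and hence sit inside $S_{N,2N}$, exactly where you are building your tubes. So the two events share sites. Second, even once that overlap is isolated, you cannot invoke the plain FKG inequality to positively correlate the tube events with $\Att^{\eta,I/\eta',I'}(n,N)$: for non-constant $\sigma$ the latter is the intersection of an increasing event (black arms) and a decreasing one (white arms), and FKG applied naively to such a mixed pair gives a bound in the \emph{wrong} direction. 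The paper resolves both issues with a dedicated ``locally monotone'' FKG lemma: writing $\Att = A^+\cap A^-$ with $A^+$ increasing and $A^-$ decreasing, it exploits that $A^+$ and the black tube extensions live on $\mathcal{A}\cup\mathcal{A}^+$, $A^-$ and the white tube extensions on $\mathcal{A}\cup\mathcal{A}^-$, with $\mathcal{A}^+$ and $\mathcal{A}^-$ disjoint, and proves
\[
\PPP(\tilde{A}^+\cap\tilde{A}^-\,|\,A^+\cap A^-)\ \geq\ \PPP(\tilde{A}^+)\,\PPP(\tilde{A}^-)
\]
by conditioning on $\omega_{\mathcal{A}}$, splitting into independent $\mathcal{A}^+$/$\mathcal{A}^-$ halves, and applying FKG within each half. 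Without this lemma (or an equivalent argument that controls the correlation between a non-monotone event and the extensions sharing the free-space region), the inequality in your parts (1) and (2) does not follow from what you wrote.
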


\begin{proof}
The proof relies of gluing arguments based on RSW constructions. However, the events considered are not monotone when $\sigma$ is non-constant (there is at least one black arm and one white arm). We will thus need a slight generalization of the FKG inequality for events ``locally monotone''.

\begin{lemma}
Consider $A^+$, $\tilde{A}^+$ two increasing events, and $A^-$, $\tilde{A}^-$ two decreasing events. Assume that there exist three disjoint finite sets of vertices $\mathcal{A}$, $\mathcal{A}^+$ and $\mathcal{A}^-$ such that $A^+$, $A^-$, $\tilde{A}^+$ and $\tilde{A}^-$ depend only on the sites in, respectively, $\mathcal{A} \cup \mathcal{A}^+$, $\mathcal{A} \cup \mathcal{A}^-$, $\mathcal{A}^+$ and $\mathcal{A}^-$. Then we have
\begin{equation}
\PPP(\tilde{A}^+ \cap \tilde{A}^- | A^+ \cap A^-) \geq \PPP(\tilde{A}^+) \PPP(\tilde{A}^-)
\end{equation}
for any product measure $\PPP$.
\end{lemma}

\begin{proof}
Conditionally on the configuration $\omega_{\mathcal{A}}$ in $\mathcal{A}$, the events $A^+ \cap \tilde{A}^+$ and $A^- \cap \tilde{A}^-$ are independent, so that
$$\PPP(A^+ \cap \tilde{A}^+ \cap A^- \cap \tilde{A}^- | \omega_{\mathcal{A}}) = \PPP(A^+ \cap \tilde{A}^+ | \omega_{\mathcal{A}}) \PPP(A^- \cap \tilde{A}^- | \omega_{\mathcal{A}}).$$
The FKG inequality implies that
\begin{align*}
\PPP(A^+ \cap \tilde{A}^+ | \omega_{\mathcal{A}}) & \geq \PPP(A^+ | \omega_{\mathcal{A}}) \PPP(\tilde{A}^+ | \omega_{\mathcal{A}})\\
& = \PPP(A^+ | \omega_{\mathcal{A}}) \PPP(\tilde{A}^+)
\end{align*}
and similarly with $A^-$ and $\tilde{A}^-$. Hence,
\begin{align*}
\PPP(A^+ \cap \tilde{A}^+ \cap A^- \cap \tilde{A}^- | \omega_{\mathcal{A}}) & \geq \PPP(A^+ | \omega_{\mathcal{A}}) \PPP(\tilde{A}^+) \PPP(A^- | \omega_{\mathcal{A}}) \PPP(\tilde{A}^-)\\
& = \PPP(A^+ \cap A^- | \omega_{\mathcal{A}}) \PPP(\tilde{A}^+) \PPP(\tilde{A}^-).
\end{align*}
The conclusion follows by summing over all configurations $\omega_{\mathcal{A}}$.
\end{proof}

Once this lemma at our disposal, items \ref{extend}. and \ref{quasi_mult2}. are straightforward. For item \ref{existI}., we consider a consider a covering of $\partial S_n$ (resp. $\partial S_N$) with at most $8 \eta^{-1}$ (resp. $8 \eta'^{-1}$) intervals $(I)$ of length $\eta$ (resp. $(I')$ of length $\eta'$). Then for some $I$, $I'$,
$$\PPP\big(\Att_{j,\sigma}^{\eta,I / \eta',I'}(n,N)\big) \geq (8 \eta^{-1})^{-1} (8 \eta'^{-1})^{-1} \PPP\big(\At_{j,\sigma}^{\eta,\eta'}(n,N)\big).$$
\end{proof}

We also have the following a-priori bounds for the arm events:
\begin{proposition}
Fix some $j \geq 1$, $\sigma \in \Sj$ and $\eta_0,\eta'_0 \in (0,1)$. Then there exist some exponents $0 < \alpha_j, \alpha' < \infty$, as well as constants $0 < C_j,C' < \infty$, such that
\begin{equation} \label{apriori}
C_j \bigg(\frac{n}{N}\bigg)^{\alpha_j} \leq \PPP\big(\Att_{j,\sigma}^{\eta,I / \eta',I'}(n,N)\big) \leq C' \bigg(\frac{n}{N}\bigg)^{\alpha'}
\end{equation}
uniformly in $p$, $\PPP$ between $\PP_p$ and $\PP_{1-p}$, $n \leq N \leq L(p)$, and all landing sequences $I / I'$ of size $\eta / \eta'$ larger than $\eta_0 / \eta'_0$.
\end{proposition}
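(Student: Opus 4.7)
The proof of the a-priori bounds splits naturally into an upper bound and a lower bound, each polynomial in $n/N$, handled by dyadic iteration up to scale $L(p)$.

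For the upper bound, observe that since $\sigma$ is nonempty, at least one of the two colors appears in it; without loss of generality black does, so $\Att_{j,\sigma}^{\eta,I/\eta',I'}(n,N) \subseteq A_{1,B}(n,N)$, the event that there is a single black crossing $\partial S_n \leadsto \partial S_N$. Such a crossing must traverse every dyadic sub-annulus $S_{2^k n, 2^{k+1} n}$, and any such traversal is prevented by the existence of a white circuit inside that sub-annulus. By the RSW estimates recalled in the previous subsection (which hold uniformly for $\PPP$ between $\PP_p$ and $\PP_{1-p}$ at every scale $\leq L(p)$), each white circuit can be built by gluing four white crossings of $2 \times 1$ parallelograms, each of probability at least $\delta_2 > 0$; FKG then gives the circuit a probability at least some $\delta = \delta(\epsilon_0) > 0$. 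Since the $\lfloor \log_2(N/n)\rfloor$ dyadic sub-annuli are disjoint the blocking events are independent, so
\begin{equation*}
\PPP\big(A_{1,B}(n,N)\big) \leq (1-\delta)^{\lfloor \log_2(N/n) \rfloor} \leq C' (n/N)^{\alpha'}, \qquad \alpha' := -\log_2(1-\delta) > 0.
\end{equation*}

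For the lower bound, the strategy is to construct well-separated $\sigma$-colored arms annulus by annulus and paste. In each dyadic annulus $S_{m,2m}$ (with $m \geq n_0(j)$, discrete corrections at the innermost scale being absorbed into the constant), one partitions the annulus into $j$ disjoint angular ``tubes'' in the cyclic order prescribed by $\sigma$, and in each tube builds a monochromatic crossing of the right color with probability at least some $c_j = c_j(\epsilon_0) > 0$ by an explicit RSW construction; since the tubes involve disjoint sites, the $j$ events are independent. One arranges simultaneously the well-separation at both boundaries of $S_{m,2m}$ and the free-space buffer rectangles $r_i$, all within the same construction, at uniformly positive cost. At the innermost and outermost scales one additionally targets the prescribed landing intervals $I, I'$; this costs only a uniform constant since $\eta \geq \eta_0$ and $\eta' \geq \eta'_0$ give a fixed (bounded) number of combinatorial possibilities. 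Consecutive scales are then glued: the free spaces make adjacent arm-tips join with uniformly positive conditional probability, via a local application of the locally-monotone FKG inequality (the lemma used inside the proof of Proposition \ref{easy_prop}), which is required because arms of different colors coexist in the same small region. Multiplying across the $\lfloor \log_2(N/n)\rfloor$ dyadic scales yields
\begin{equation*}
\PPP\big(\Att_{j,\sigma}^{\eta,I/\eta',I'}(n,N)\big) \geq C_j (n/N)^{\alpha_j}, \qquad \alpha_j := -\log_2 c_j > 0.
\end{equation*}

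The main obstacle is the per-annulus construction of $j$ well-separated arms of prescribed cyclic colors together with the associated free spaces. The constraint $n \geq n_0(j)$ is exactly what ensures that $j$ disjoint tubes fit at the innermost scale; at larger scales the room is abundant. Within a single dyadic annulus, because arms of opposite colors occupy disjoint tubes, the relevant events are independent rather than merely FKG-correlated, so RSW alone suffices; the locally-monotone FKG enters only at the gluing step, where in the disjoint neighborhoods of the matching extremities one combines an increasing event (black path in a small free space) with a decreasing one (white path in another). Uniformity in $p$ and in $\PPP$ between $\PP_p$ and $\PP_{1-p}$ follows from the RSW bounds being valid up to $L(p)$, and uniformity over the landing sequences $I, I'$ of sizes $\geq \eta_0, \eta'_0$ comes from the finitely many discrete choices entering the construction at the extreme scales.
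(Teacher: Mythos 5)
Your proof is correct and takes essentially the same approach as the paper's two-sentence proof: for the upper bound a blocking-circuit argument in disjoint dyadic sub-annuli (the paper blocks a white arm with black circuits, you block a black arm with white circuits, which is the same argument up to color symmetry), and for the lower bound an RSW/locally-monotone-FKG construction of well-separated arms scale by scale, which is exactly what the paper's instruction to ``iterate Item~\ref{extend}.\ of Proposition~\ref{easy_prop}'' amounts to when unfolded.
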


The lower bound comes from iterating item \ref{extend}. The upper bound can be obtained by using concentric annuli: in each of them, RSW implies that there is a probability bounded away from zero to observe a black circuit, preventing the existence of a white arm (consider a white circuit instead if $\sigma = BB \ldots B$).

\subsection{Proof of the main result}

Assume that $A_{j,\sigma}(n,N)$ is satisfied: our goal is to link this event to the event $\Att_{j,\sigma}^{\eta_0,I_{\eta_0} / \eta'_0,I_{\eta'_0}}(n,N)$, for some fixed scales $\eta_0, \eta'_0$.

\begin{proof}
First note that it suffices to prove the result for $n$, $N$ which are powers of two: then we would have, if $k$, $K$ are such that $2^{k-1} < n \leq 2^k$ and $2^K \leq n < 2^{K+1}$,
\begin{align*}
\PPP\big(A_{j,\sigma}(n,N)\big) & \leq \PPP\big(A_{j,\sigma}(2^k,2^K)\big)\\
& \leq C_1 \PPP\big(\Att_{j,\sigma}^{\eta,I / \eta',I'}(2^k,2^K)\big)\\
& \leq C_2 \PPP\big(\Att_{j,\sigma}^{\eta,I / \eta',I'}(n,N)\big).\\
\end{align*}

We have to deal with the extremities of the $j$ arms on the internal boundary $\partial S_n$, and on the external boundary $\partial S_N$.

\subsubsection*{1. External extremities}

Let us begin with the external boundary. In the course of proof, we will have use for the intermediate event $\At_{j,\sigma}^{. / \eta'}(n,N)$ that there exists a set of $j$ arms that is well-separated on the external side $\partial S_N$ only, and also the event $\Att_{j,\sigma}^{. / \eta',I'}(n,N)$ associated to some landing sequence $I'$ on $\partial S_N$. Each of the $j$ arms induces in $S_{2^{K-1},2^K}$ a crossing of one of the four U-shaped regions $U_{2^{K-1}}^{1,\textrm{ext}},\ldots,U_{2^{K-1}}^{4,\textrm{ext}}$ depicted in Figure \ref{U_shapes_ext}. The ``ext'' indicates that a crossing of such a region connects the two marked parts of the boundary. For the internal extremities, we will use the same regions, but we distinguish different parts of the boundary. The key observation is the following:

\begin{figure}
\begin{center}
\includegraphics[width=11cm]{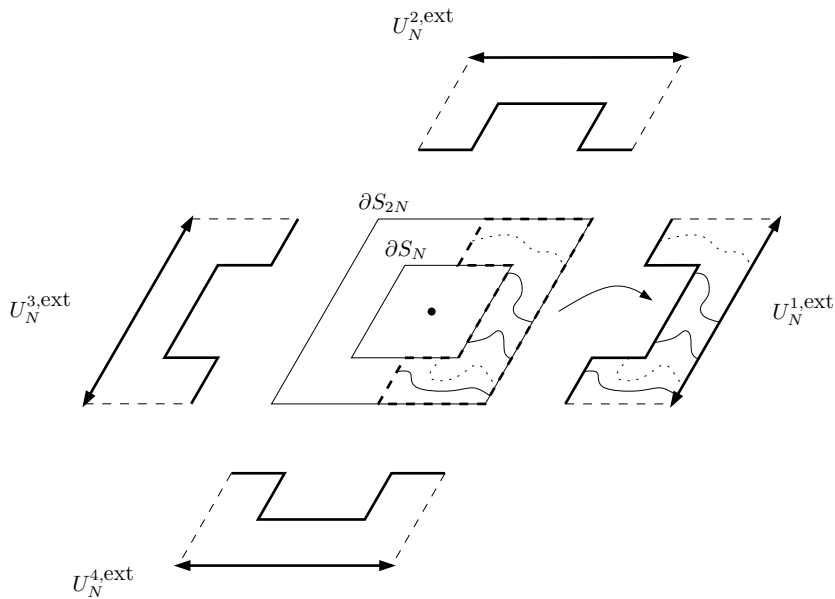}
\caption{\label{U_shapes_ext}The four U-shaped regions that we use for the external extremities.}
\end{center}
\end{figure}

\noindent \textbf{In a U-shaped region, any set of disjoint crossings can be made well-separated with high probability.}

More precisely, if we take such an $N \times 4N$ domain, the probability that any set of disjoint crossings can be made $\eta$-well-separated (on the external boundary) can be made arbitrarily close to $1$ by choosing $\eta$ sufficiently small, uniformly in $N$. We prove the following lemma, which implies that on \emph{every} scale, with very high probability the $j$ arms can be made well-separated.

\begin{lemma} \label{well_sep}
For any $\delta>0$, there exists a size $\eta(\delta) >0$ such that for any $p$, any $\PPP$ between $\PP_p$ and $\PP_{1-p}$ and any $N \leq L(p)$: in the domain $U_{N}^{1,\textrm{ext}}$,
\begin{equation}
\PPP(\text{Any set of disjoint crossings can be made $\eta$-well-separated}) \geq 1 - \delta.
\end{equation}
\end{lemma}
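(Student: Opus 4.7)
The plan is to construct, with probability at least $1-\delta$, a dense scaffolding of nested bi-colored paths near the external side of $U_N^{1,\mathrm{ext}}$ along which an arbitrary set of disjoint arms can be diverted to land at prescribed, mutually well-separated positions.

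First, I would cover the external side of $U_N^{1,\mathrm{ext}}$ by $K \asymp \eta^{-1/2}$ disjoint candidate landing intervals $I_1,\ldots,I_K$ of length $\sqrt{\eta}\,N$, pairwise separated (and separated from the corners) by gaps of at least $2\sqrt{\eta}\,N$. Let $z_k$ be the center of $I_k$ and $r_k$ the attached free-space region of sides of order $\sqrt{\eta}\,N$ appearing in the definition of well-separateness, lying inside $U_N^{1,\mathrm{ext}}$. For each $k$, I would consider nested dyadic sub-annuli $A_k^{(i)} := S_{2^{i-1},2^i}(z_k) \cap r_k$ with $2^i$ ranging between a fixed constant and $\sqrt{\eta}\,N/2$; split each $A_k^{(i)}$ into two disjoint concentric sub-rings, and let $G_k$ be the event that for every $i$ the inner sub-ring contains a black ``half-circuit'' (a path joining the two radial sides of $r_k$ inside the sub-ring) and the outer sub-ring contains a white half-circuit, and that the innermost pair of sub-rings further connect radially to $I_k$ through both colors.

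Next, each sub-ring event is bounded below by a universal constant $c_0>0$ thanks to RSW (Theorem \ref{thm_RSW}), applied at scale $2^i \leq \sqrt{\eta}\,N \leq L(p)$ and uniformly in $\PPP$ between $\PP_p$ and $\PP_{1-p}$. The use of disjoint sub-rings restores independence between the two colors within each scale, and different dyadic scales are independent as well, so $\PPP(G_k^c) \leq (1-c_0^2)^m$ where $m \asymp \log_2(\sqrt{\eta}\,N)$; iterating RSW more times within each sub-ring if needed, this can be made $\leq \eta^\gamma$ for any prescribed $\gamma > 0$. A union bound over the $K$ zones then yields $\PPP(G^c) \leq K \eta^\gamma \leq C \eta^{\gamma-1/2}$, which is at most $\delta$ once $\gamma > 1/2$ and $\eta$ is small. (The case $\sqrt{\eta}\,N$ bounded is trivial, as the well-separateness condition is then essentially vacuous.)

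On the event $G := \bigcap_k G_k$, given any set $\mathcal{C}=\{c_i\}_{1\leq i\leq j}$ of $j$ disjoint $\sigma$-colored crossings, I would assign the $c_i$ to $j$ distinct landing zones $I_{k_i}$ respecting the cyclic order along the external side (possible since $K \gg j$ for $\eta$ small). For each $i$, truncate $c_i$ at its last entry into the outermost sub-annulus of $r_{k_i}$, then splice with the $\sigma_i$-colored half-circuit and radial path supplied by $G_{k_i}$; this yields a new arm $c'_i$ of color $\sigma_i$ with extremity in $I_{k_i}$. The innermost sub-annuli of $G_{k_i}$ also supply the crossing $\tilde{c}'_i$ of $r_{k_i}$ required by the definition of well-separateness. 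Because the half-parallelograms $r_{k_i}$ are pairwise disjoint and every modification is confined to its own $r_{k_i}$, the resulting $c'_i$ are globally disjoint and preserve the cyclic pattern $\sigma$, so $\mathcal{C}'=\{c'_i\}$ witnesses the desired well-separateness.

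The main obstacle is the non-monotonicity of arm events: each local scaffolding must carry paths of \emph{both} colors simultaneously, and the original arms entering a zone may do so with an unpredictable color at an unpredictable location. The remedy is to decompose each dyadic sub-annulus into disjoint concentric sub-rings assigned to each color, restoring independence so that RSW can be applied color-by-color, while confining all modifications to the mutually disjoint $r_k$ prevents re-routed arms from interfering with one another.
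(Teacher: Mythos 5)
Your re-routing step has a genuine gap. You pre-select a fixed set of $K \asymp \eta^{-1/2}$ landing zones $I_k$ and build local scaffolding only inside the attached regions $r_k$, then ``assign'' each original crossing $c_i$ to some designated $I_{k_i}$. But the original crossings can arrive anywhere on the external side, and once you impose the cyclic-order constraint on the assignment $i \mapsto k_i$, some $c_i$ may be forced onto a zone $I_{k_i}$ far from its actual extremity (for instance if several crossings arrive very close together). Such a $c_i$ need never enter the sub-annuli of $r_{k_i}$: there is nothing to ``truncate,'' the scaffolding there is disconnected from $c_i$, and the splice cannot be performed. The paper avoids this by building the protection \emph{adaptively}, around the actual crossing extremities: it orders the crossings bottom-up ($c_1$ the lowest, then the lowest above $c_1$, and so on), conditions on them one at a time (which leaves the sites above fresh and keeps RSW applicable), surrounds each extremity $z_u$ with circuits of both colors, and then handles an arbitrary disjoint family $\mathcal{C}'$ by a surgery argument that replaces the tip of each $c'_v$ with the tip of the lowest $c_u$ it intersects. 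Maximality of the canonical family guarantees that each $c'_v$ does intersect some $c_u$, which is precisely the contact that your fixed scaffolding cannot supply.

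Two smaller issues. First, you define $G_k$ as an intersection over all dyadic scales and then claim $\PPP(G_k^c) \leq (1-c_0^2)^m$; for an intersection of $m$ independent events each of probability at least $c_0^2$ that inequality is backwards (one only gets $\PPP(G_k) \geq c_0^{2m}$, which decays with $m$), and in any case your $m \asymp \log_2(\sqrt{\eta}\,N)$ depends on $N$, so the union bound over $K \asymp \eta^{-1/2}$ zones cannot be made uniform in $N$. The paper instead restricts the protective annuli to radii lying between two fixed fractional powers of $\eta$ (times $N$), giving $\asymp \log(1/\eta)$ scales independent of $N$. Second, the number $j$ of disjoint crossings is not a priori bounded by $K$: it can grow like $N$. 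The paper first shows, via the BK inequality, that with probability at least $1 - \delta/4$ there are fewer than $T(\delta)$ disjoint crossings; your argument needs this bound too before ``$K \gg j$'' can be invoked.
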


\begin{proof}
First we note that there cannot be too many disjoint crossings in $U_{N}^{1,\textrm{ext}}$. Indeed, the probability of crossing this domain is less than some $1 - \delta'$ (by RSW): combined with the BK inequality, this implies that the probability of observing at least $h$ crossings is less than
\begin{equation}
(1-\delta')^h.
\end{equation}
We thus take $T$ such that this quantity is less than $\delta/4$.

Consider for the moment any $\eta \in (0,1)$ (we will see during the proof how to choose it). We note that we can put disjoint annuli around $Z_-$ and $Z_+$ to prevent crossings from arriving there. Consider $Z_-$ for instance, and look at the disjoint annuli centered on $Z_-$ of the form $S_{2^{l-1},2^l}(Z_-)$, with $\eta^{3/8} \leq 2^{l-1} < 2^l \leq \sqrt{\eta}$ (see Figure \ref{Separation}). We can take at least $- C_4 \log \eta$ such disjoint annuli for some universal constant $C_4 > 0$, and with probability at least $1-(1-\delta'')^{-C_4 \log \eta}$ there exists a black circuit in one of the annuli. Consider then the annuli $S_{2^{l-1},2^l}(Z_-)$, with $\eta^{1/4} \leq 2^{l-1} < 2^l \leq \eta^{3/8}$: with probability at least $1- (1-\delta'')^{-C'_4 \log \eta}$ we observe a white circuit in one of them. If two circuits as described exist, we say that $Z_-$ is ``protected''. The same reasoning applies for $Z_+$.

Consider now the following construction: take $c_1$ the lowest (\emph{ie} closest to the bottom side) monochromatic crossing (which can be either black or white), then $c_2$ the lowest monochromatic crossing disjoint from $c_1$, and so on. The process stops after $t$ steps, and we denote by $\mathcal{C} = \{c_{u}\}_{1 \leq u \leq t}$ the set of crossings so-obtained. Of course, $\mathcal{C}$ can be void: we set $t=0$ in this case. We have
\begin{equation}
\PP(t \geq T) \leq (1-\delta')^T \leq \delta/4
\end{equation}
by definition of $T$. We denote by $z_u$ the extremity of $c_u$ on the right side, and by $\sigma_u \in \{B,W\}$ its color.

In order to get some independence and be able to apply the previous construction around the extremities of the crossings, we condition on the successive crossings. Consider some $u \in \{1,\ldots,T\}$ and some ordered sequence of crossings $\tilde{c}_1, \tilde{c}_2, \ldots, \tilde{c}_u$, together with colors $\tilde{\sigma}_1, \tilde{\sigma}_2, \ldots, \tilde{\sigma}_u$. The event $E_u := \{ t \geq u \text{ and } c_v = \tilde{c}_v, \: \sigma_v=\tilde{\sigma}_v \text{ for any } v \in \{1,\ldots,u\} \}$ is independent from the status of the sites above $\tilde{c}_u$. Hence, if we condition on $E_u$, percolation there remains unbiased and we can use the RSW theorem.

We now do the same construction as before. Look at the disjoint annuli centered on $z_u$ of the form $S_{2^{l-1},2^l}(z_u)$, with $\eta^{3/8} \leq 2^{l-1} < 2^l \leq \sqrt{\eta}$ on one hand, and with $\eta^{1/4} \leq 2^{l-1} < 2^l \leq \eta^{3/8}$ on the other hand. Assume for instance that $\tilde{\sigma}_u=B$. With probability at least $1-(1-\delta'')^{-C''_4 \log \eta}$ we observe a white circuit in one of the annuli in the first set, preventing other disjoint black crossings to arrive near $z_u$, and also a black one in the second set, preventing white crossings to arrive. Moreover, by considering a black circuit in the annuli $S_{2^{l-1},2^l}(z_u)$ with $\sqrt{\eta} \leq 2^{l-1} < 2^l \leq \eta^{3/4}$, we can construct a small extension of $c_u$. If the three circuits described exist, $c_u$ is said to be ``protected from above''. Summing over all possibilities for $\tilde{c}_i$, $\tilde{\sigma}_i$ ($1 \leq i \leq u$), we get that for some $C'''_4$,
\begin{equation}
\PP(\text{$t \geq u$ and $c_u$ is not protected from above}) \leq (1-\delta'')^{-C'''_4 \log \eta}.
\end{equation}

\begin{figure}
\begin{center}
\includegraphics[width=8cm]{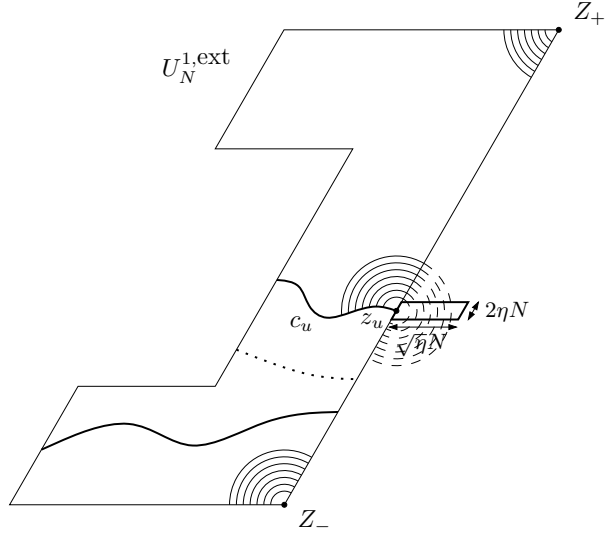}
\caption{\label{Separation}We apply RSW in concentric annuli around $Z_-$ and $Z_+$, and then around the extremity $z_u$ of each crossing $c_u$.}
\end{center}
\end{figure}

Now for our set of crossings $\mathcal{C}$,
\begin{align*}
\PP(\text{$\mathcal{C}$ is not $\eta$-well-separated}) & \\
& \hspace{-3cm} \leq \PP(t \geq T) + \sum_{u=1}^{T-1} \PP(\text{$t \geq u$ and $c_u$ is not protected from above} )\\
& \hspace{-2.5cm} + \PP(\text{$Z_-$ is not protected}) + \PP(\text{$Z_+$ is not protected}).
\end{align*}
First, each term in the sum, as well as the last two terms, are less than $(1-\delta'')^{-C'''_4 \log \eta}$. We also have
$\PP(t \geq T) \leq \delta/4$, so that the right-hand side is at most
\begin{equation}
(T+1) (1-\delta'')^{-C'''_4 \log \eta} + \frac{\delta}{4}.
\end{equation}
It is less than $\delta$ if we choose $\eta$ sufficiently small ($T$ is fixed).

We now assume that $\mathcal{C}$ is $\eta$-well-separated, and prove that any other set $\mathcal{C}' = \{c'_{u}\}_{1 \leq u \leq t'}$ of $t'$ ($\leq t$) disjoint crossings (we take it ordered) can also be made $\eta$-well-separated. For that purpose, we replace recursively the tip of each $c'_v$ by the tip of one of the $c_u$'s. If we take $c'_1$ for instance, it has to cross at least one of the $c_v$ (by maximality of $\mathcal{C}$). Let us call $c_{v_1}$ the lowest one: still by maximality, $c'_1$ cannot go below it. Take the piece of $c'_1$ between its extremity $z'_1$ and its last intersection $a_1$ with $c_{v_1}$, and replace it with the corresponding piece of $c_{v_1}$: this gives $c''_1$. This new crossing has the same extremity as $c_{v_1}$ on the right-side, and it is not hard to check that it is connected to the small extension $\tilde{c}_{v_1}$ of $c_{v_1}$ on the external side. Indeed, this extension is connected by a path that touches $c_{v_1}$ in, say, $b_1$: either $b_1$ is between $a_1$ and $z_1$, in which case $c''_1$ is automatically connected to $\tilde{c}_{v_1}$, otherwise $c'_1$ has to cross the connecting path before $a_1$ and $c''_1$ is also connected to $\tilde{c}_{v_1}$.

Consider then $c_2$, and $c_{v_2}$ the lowest crossing it intersects: necessarily $v_2 > v_1$ (since $c_1$ stays above $c_{v_1}$), and the same reasoning applies. The claim follows by continuing this procedure until $c'_{t'}$.

\end{proof}

\noindent \textbf{The arms are well-separated with positive probability.}

The idea is then to ``go down'' in successive concentric annuli, and to apply the lemma in each of them. We work with two different scales of separation:
\begin{itemize}
\item[$\bullet$] a fixed (macroscopic) scale $\eta'_0$ that we will use to extend arms, associated to a \emph{constant} extension cost.

\item[$\bullet$] another scale $\eta'$ which is \emph{very} small ($\eta' \ll \eta'_0$), so that the $j$ arms can be made well-separated at scale $\eta'$ with very high probability.
\end{itemize}

The proof goes as follows. Take some $\delta > 0$ very small (we will see later how small), and some $\eta' > 0$ associated to it by the lemma. We start from the scale $\partial S_{2^K}$ and look at the crossings induced by the $j$ arms. The previous lemma implies that with very high probability, these $j$ arms can be modified in $S_{2^{K-1},2^K}$ so that they are $\eta'$-well-separated. Otherwise, we go down to the next annulus: there still exist $j$ arms, and what happens in $S_{2^{K-1},2^K}$ is independent of what happens in $S_{2^{K-1}}$. On each scale, we have a very low probability to fail, and once the arms are separated on scale $\eta'$, we go backwards by using the scale $\eta'_0$, for which the cost of extension is constant.

More precisely, after one step we get $$A_{j,\sigma}(2^k,2^K) \subseteq \At_{j,\sigma}^{. / \eta'}(2^k,2^K) \cup \Big(\{\text{One of the four $U_{2^{K-1}}^{i,\textrm{ext}}$ fails}\} \cap A_{j,\sigma}(2^k,2^{K-1})\Big).$$
Hence, by independence of the two latter events,
$$\PPP(A_{j,\sigma}(2^k,2^K)) \leq \PPP(\At_{j,\sigma}^{./\eta'}(2^k,2^K)) + (4 \delta) \PPP(A_{j,\sigma}(2^k,2^{K-1})).$$
We then iterate this argument: after $K-k$ steps,
\begin{align*}
\PPP(A_{j,\sigma}(2^k,2^K)) & \\
& \hspace{-2cm} \leq \PPP(\At_{j,\sigma}^{. / \eta'}(2^k,2^K)) + (4 \delta) \PPP(\At_{j,\sigma}^{. / \eta'}(2^k,2^{K-1})) + (4 \delta)^2 \PPP(\At_{j,\sigma}^{. / \eta'}(2^k,2^{K-2})) + \ldots \\
& \hspace{-1.5cm} + (4 \delta)^{K-k-1} \PPP(\At_{j,\sigma}^{. / \eta'}(2^k,2^{k+1})) + (4 \delta)^{K-k}.
\end{align*}
We then use the size $\eta'_0$ to go backwards: if the crossings are $\eta'$-separated at some scale $m$, there exists some landing sequence $I_{\eta'}$ of size $\eta'$ where the probability of landing is comparable to the probability of just being $\eta'$-well-separated, and then we can reach $I_{\eta'_0}$ of size $\eta'_0$ on the next scale. More precisely, there exist universal constants $C_1(\eta')$, $C_2(\eta')$ depending only on $\eta'$ such that for all $1 \leq i' \leq i$, we can choose some $I_{\eta'}$ (which can depend on $i'$) such that
$$\PPP(\At_{j,\sigma}^{. / \eta'}(2^k,2^{K-i'})) \leq C_1(\eta') \PPP(\Att_{j,\sigma}^{. / \eta',I_{\eta'}}(2^k,2^{K-i'}))$$
and then go to $I_{\eta'_0}$ on the next scale with cost $C_2(\eta')$:
$$\PPP(\Att_{j,\sigma}^{. / \eta',I_{\eta'}}(2^k,2^{K-i'})) \leq C_2(\eta') \PPP(\Att_{j,\sigma}^{. / \eta'_0,I_{\eta'_0}}(2^k,2^{K-i'+1})).$$
Now for the size $\eta'_0$, going from $\partial S_m$ to $\partial S_{2m}$ has a cost $C'_0$ depending \emph{only} on $\eta'_0$ on each scale $m$, we have thus
$$\PPP(\At_{j,\sigma}^{. / \eta'}(2^k,2^{K-i'})) \leq C_1(\eta') C_2(\eta') C_0^{i'-1} \PPP(\Att_{j,\sigma}^{. / \eta'_0,I_{\eta'_0}}(2^k,2^K)).$$
There remains a problem with the first term $\PPP(\At_{j,\sigma}^{. / \eta'}(2^k,2^K))$\ldots So assume that we have started from $2^{K-1}$ instead, so that the annulus $S_{2^{K-1},2^K}$ remains free:
\begin{align*}
\PPP(A_{j,\sigma}(2^k,2^K)) & \\
& \hspace{-2cm} \leq \PPP(A_{j,\sigma}(2^k,2^{K-1}))\\
& \hspace{-2cm} \leq \PPP(\At_{j,\sigma}^{. / \eta'}(2^k,2^{K-1})) + (4 \delta) \PPP(\At_{j,\sigma}^{. / \eta'}(2^k,2^{K-2})) + (4 \delta)^2 \PPP(\At_{j,\sigma}^{. / \eta'}(2^k,2^{K-3})) + \ldots \\
& \hspace{-1cm} + (4 \delta)^{K-k-2} \PPP(\At_{j,\sigma}^{. / \eta'}(2^k,2^{k+1})) + (4 \delta)^{K-k-1} \\
& \hspace{-2cm} \leq C_1(\eta') C_2(\eta') \bigg[ 1 + (4 \delta C_0) + \ldots + (4 \delta C_0)^{K-k-1} \bigg] \PPP(\Att_{j,\sigma}^{. / \eta'_0,I_{\eta'_0}}(2^k,2^K)).
\end{align*}
Now $C_0$ is fixed as was noticed before, so we may have taken $\delta$ such that $4 \delta C_0 < 1/2$, so that
$$C_1(\eta') C_2(\eta') \bigg[ 1 + (4 \delta C_0) + \ldots + (4 \delta C_0)^{K-k-1} \bigg] \leq C_3(\eta')$$
for some $C_3(\eta')$. We have thus reached the desired conclusion for external extremities:
$$\PPP(A_{j,\sigma}(2^k,2^K)) \leq C_3(\eta') \PPP(\Att_{j,\sigma}^{. / \eta'_0,I_{\eta'_0}}(2^k,2^{K})).$$

\subsubsection*{2. Internal extremities}

\begin{figure}
\begin{center}
\includegraphics[width=11cm]{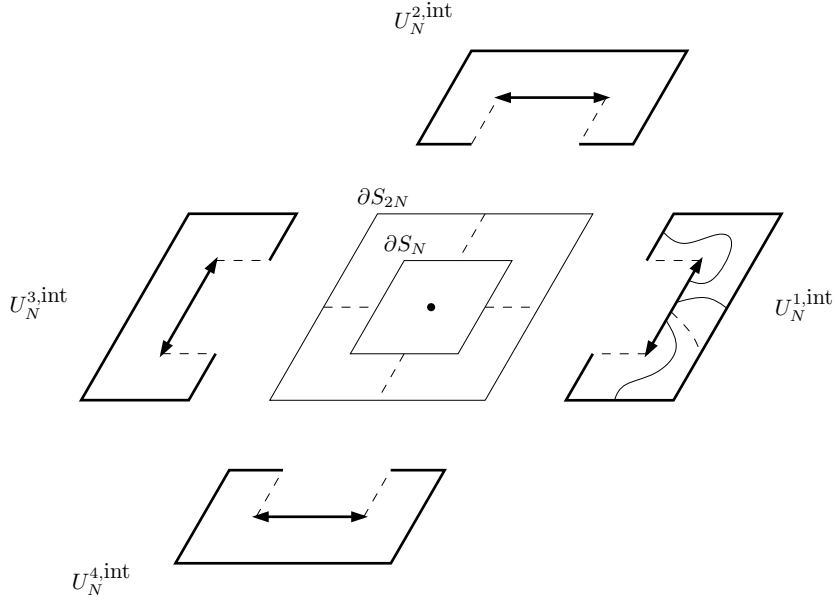}
\caption{\label{U_shapes_int}For the internal extremities, we consider the same domains but we mark different parts of the boundary.}
\end{center}
\end{figure}

The reasoning is the same for internal extremities, except that we work in the other direction, from $\partial S_{2^k}$ toward the interior. If we consider the domains $U_{N}^{i,\textrm{int}}$ having the same shapes as the $U_{N}^{i,\textrm{ext}}$ domains, but with different parts of the boundary distinguished (see Figure \ref{U_shapes_int}), then the lemma remains true. Hence,
\begin{align*}
\PPP(\Att_{j,\sigma}^{. / \eta'_0,I_{\eta'_0}}(2^k,2^K)) & \leq \PPP(\Att_{j,\sigma}^{. / \eta'_0,I_{\eta'_0}}(2^{k+1},2^K))\\
& \hspace{-2.5cm}\leq \PPP(\Att_{j,\sigma}^{\eta,. / \eta'_0,I_{\eta'_0}}(2^{k+1},2^K)) + (4 \delta) \PPP(\Att_{j,\sigma}^{\eta,. / \eta'_0,I_{\eta'_0}}(2^{k+2},2^K)) + \ldots \\
& \hspace{-1.5cm} + (4 \delta)^{K-k-2} \PPP(\Att_{j,\sigma}^{\eta,. / \eta'_0,I_{\eta'_0}}(2^{K-1},2^K)) + (4 \delta)^{K-k-1}\\
& \hspace{-2.5cm} \leq C_1(\eta) C_2(\eta) \bigg[ 1 + (4 \delta C_0) + \ldots + (4 \delta C_0)^{K-k-1} \bigg] \PPP(\Att_{j,\sigma}^{\eta_0,I_{\eta_0} / \eta'_0,I_{\eta'_0}}(2^k,2^K))
\end{align*}
and the conclusion follows.
\end{proof}

\subsection{Some consequences}

We now state some important consequences of the previous theorem.

\subsubsection*{Extendability}

\begin{proposition}
Take $j \geq 1$ and a color sequence $\sigma \in \Sj$. Then
\begin{equation}
\PPP(A_{j,\sigma}(n,2N)), \: \PPP(A_{j,\sigma}(n/2,N)) \asymp \PPP(A_{j,\sigma}(n,N))
\end{equation}
uniformly in $p$, $\PPP$ between $\PP_p$ and $\PP_{1-p}$ and $n_0(j) \leq n \leq N \leq L(p)$.
\end{proposition}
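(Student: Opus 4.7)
The plan is to deduce this from the separation theorem (Theorem \ref{armsep}) together with the extendability at the level of the well-separated events (item \ref{extend} of Proposition \ref{easy_prop}). The key observation is that for the unrestricted arm event $A_{j,\sigma}$, one inequality in each $\asymp$ is free: since any configuration realizing $A_{j,\sigma}(n,2N)$ automatically realizes $A_{j,\sigma}(n,N)$ (just truncate the arms at $\partial S_N$), and similarly any realization of $A_{j,\sigma}(n/2,N)$ realizes $A_{j,\sigma}(n,N)$ (the arms must cross $\partial S_n$), inclusion gives
\begin{equation*}
\PPP(A_{j,\sigma}(n,2N)) \leq \PPP(A_{j,\sigma}(n,N)) \quad \text{and} \quad \PPP(A_{j,\sigma}(n/2,N)) \leq \PPP(A_{j,\sigma}(n,N)).
\end{equation*}

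For the reverse inequalities, I would fix once and for all some scales $\eta_0,\eta_0'\in(0,1)$ (say $\eta_0=\eta_0'=1/100$). Theorem \ref{armsep}, combined with item \ref{existI} of Proposition \ref{easy_prop}, tells us that there exist landing sequences $I,I'$ of size $\eta_0,\eta_0'$ (possibly depending on $p,\PPP,n,N$) such that
\begin{equation*}
\PPP(A_{j,\sigma}(n,N)) \leq C_1 \, \PPP\bigl(\Att_{j,\sigma}^{\eta_0, I / \eta_0', I'}(n,N)\bigr),
\end{equation*}
with a universal constant $C_1$. Now item \ref{extend} of Proposition \ref{easy_prop} asserts exactly that well-separated arms can be extended at constant multiplicative cost, so there exist landing sequences $\tilde{I}'$ of size $\tilde{\eta}'\geq\eta_0'$ with
\begin{equation*}
\PPP\bigl(\Att_{j,\sigma}^{\eta_0, I / \eta_0', I'}(n,N)\bigr) \leq C_2 \, \PPP\bigl(\Att_{j,\sigma}^{\eta_0, I / \tilde{\eta}', \tilde{I}'}(n,2N)\bigr),
\end{equation*}
and the same for internal extendability down to $n/2$. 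Dropping the well-separateness restriction gives $\PPP(\Att_{j,\sigma}^{\eta_0, I/\tilde{\eta}',\tilde{I}'}(n,2N)) \leq \PPP(A_{j,\sigma}(n,2N))$, and concatenating the three inequalities yields $\PPP(A_{j,\sigma}(n,N)) \leq C_1 C_2 \, \PPP(A_{j,\sigma}(n,2N))$. The same argument, using inward extendability on the internal boundary, gives $\PPP(A_{j,\sigma}(n,N)) \leq C_1 C_2' \, \PPP(A_{j,\sigma}(n/2,N))$.

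Technically, there is nothing hard here: the real work is already contained in Theorem \ref{armsep} and in the RSW-gluing arguments behind Proposition \ref{easy_prop}. The only minor point to keep in mind is that when doubling to $2N$, the scale $2N$ may exceed $L(p)$, but this is fine: the two uses of the separation theorem and of item \ref{extend} are applied at scale $N\leq L(p)$, and the final inclusion $\Att \subseteq A$ at scale $2N$ requires no upper bound on $2N$. One must also verify that the constraint $n\geq n_0(j)$ ensures all arm events under consideration are non-empty, which is automatic since $n/2$ still dominates $n_0(j)$ up to a factor of $2$ and the arm probabilities at scales $n$ and $n/2$ are comparable on such small scales by the same Theorem \ref{armsep} applied to a bounded range.
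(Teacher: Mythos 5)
Your proof is correct and follows essentially the same route as the paper, whose entire proof consists of the one-line remark that the proposition ``comes directly from combining the arm separation theorem with the extendability property of the $\Att$ events.'' You spell out that chain explicitly (inclusion in one direction; $A \to \Att$ via Theorem \ref{armsep}, then $\Att$-extendability via item \ref{extend}, then drop the separation constraint in the other direction), which is exactly what is intended; the only minor redundancy is invoking item \ref{existI} of Proposition \ref{easy_prop}, since Theorem \ref{armsep} already gives the comparison uniformly over all landing sequences of the prescribed size and no choice of a particular $I/I'$ is actually needed.
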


\begin{proof}
This proposition comes directly from combining the arm separation theorem with the extendability property of the $\Att$ events (item \ref{extend}. of Proposition \ref{easy_prop}).
\end{proof}

\subsubsection*{Quasi-multiplicativity}

\begin{proposition}
Take $j \geq 1$ and a color sequence $\sigma \in \Sj$. Then
\begin{equation}
\PPP(A_{j,\sigma}(n_1,n_2)) \PPP(A_{j,\sigma}(n_2,n_3)) \asymp \PPP(A_{j,\sigma}(n_1,n_3))
\end{equation}
uniformly in $p$, $\PPP$ between $\PP_p$ and $\PP_{1-p}$ and $n_0(j) \leq n_1 < n_2 < n_3 \leq L(p)$.
\end{proposition}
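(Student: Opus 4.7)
The plan is to deduce this quasi-multiplicativity for raw arm events directly from the machinery already assembled: the arm separation theorem (Theorem \ref{armsep}) upgrades a raw event to one with prescribed, well-separated landing areas, and item \ref{quasi_mult2}. of Proposition \ref{easy_prop} already supplies the gluing for these enhanced events. So the proof is essentially a ``translation'' argument between raw and enhanced events, plus an independence-type bound for the upper half.

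For the upper bound, I would argue directly. Any set of $j$ disjoint $\sigma$-colored arms realizing $A_{j,\sigma}(n_1,n_3)$ restricts, in the two sub-annuli $S_{n_1,n_2}$ and $S_{n_2,n_3}$, to two sets of $j$ disjoint $\sigma$-colored arms, and (by our convention on arms) the interior sites of these two restrictions lie in the disjoint sets $\mathring{S}_{n_1,n_2}$ and $\mathring{S}_{n_2,n_3}$. Hence
$$A_{j,\sigma}(n_1,n_3) \subseteq A_{j,\sigma}(n_1,n_2) \square A_{j,\sigma}(n_2,n_3),$$
and Reimer's inequality yields $\PPP(A_{j,\sigma}(n_1,n_3)) \leq \PPP(A_{j,\sigma}(n_1,n_2)) \PPP(A_{j,\sigma}(n_2,n_3))$ (for monotone $\sigma$, the BK inequality suffices).

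For the lower bound, assume first $n_2 \geq 4 n_1$. By Theorem \ref{armsep} combined with item \ref{existI}. of Proposition \ref{easy_prop}, there exist landing sequences $I_{\eta}$ on $\partial S_{n_2/4}$ and $I_{\eta'}$ on $\partial S_{n_2}$ (of some fixed sizes $\eta,\eta'$) such that
$$\PPP\bigl(\Att_{j,\sigma}^{./\eta,I_{\eta}}(n_1,n_2/4)\bigr) \asymp \PPP\bigl(A_{j,\sigma}(n_1,n_2/4)\bigr), \qquad \PPP\bigl(\Att_{j,\sigma}^{\eta',I_{\eta'}/.}(n_2,n_3)\bigr) \asymp \PPP\bigl(A_{j,\sigma}(n_2,n_3)\bigr).$$
The preceding extendability proposition then gives $\PPP(A_{j,\sigma}(n_1,n_2/4)) \asymp \PPP(A_{j,\sigma}(n_1,n_2))$ (a bounded number of halvings), and item \ref{quasi_mult2}. of Proposition \ref{easy_prop} converts the product of $\Att$-probabilities back into $\PPP(A_{j,\sigma}(n_1,n_3))$. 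Chaining these equivalences yields
$$\PPP(A_{j,\sigma}(n_1,n_2)) \PPP(A_{j,\sigma}(n_2,n_3)) \asymp \PPP(A_{j,\sigma}(n_1,n_3)).$$
The remaining case $n_1 \leq n_2 < 4 n_1$ is handled by extendability alone: $\PPP(A_{j,\sigma}(n_1,n_3)) \asymp \PPP(A_{j,\sigma}(n_2,n_3))$ up to a bounded number of doublings, and $\PPP(A_{j,\sigma}(n_1,n_2))$ is trivially bounded in $[c,1]$ by the a-priori bound \eqref{apriori}, which concludes.

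The substantive step here is really the lower bound, since a generic realization of $A_{j,\sigma}(n_1,n_2)$ provides no control on where its outer endpoints land on $\partial S_{n_2}$ or on the presence of free space to extend them into $S_{n_2,n_3}$, so direct gluing with a realization of $A_{j,\sigma}(n_2,n_3)$ is not possible. All the work to overcome this obstacle has already been done inside Theorem \ref{armsep} and item \ref{quasi_mult2}. of Proposition \ref{easy_prop}; the present proposition is therefore a short corollary that packages those results in their cleanest form.
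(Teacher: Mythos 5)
Your proof is correct and follows the same route as the paper's: the lower bound upgrades the raw arm events to well-separated $\Att$-events via Theorem~\ref{armsep} and feeds them into the quasi-multiplicativity item of Proposition~\ref{easy_prop}, exactly as the paper does, and your explicit handling of the small-ratio case $n_2 < 4n_1$ (which the paper dispenses with by an unannotated ``we may assume'') is a correct and useful addition. The only deviation is in the upper bound: where you invoke Reimer's inequality via the disjoint occurrence $A_{j,\sigma}(n_1,n_2) \square A_{j,\sigma}(n_2,n_3)$, the paper observes more simply that $A_{j,\sigma}(n_1,n_3) \subseteq A_{j,\sigma}(n_1,n_2) \cap A_{j,\sigma}(n_2,n_3)$ and that these two factors are outright \emph{independent} (by the convention on crossing extremities, each depends only on sites strictly inside its own annulus, and those two sets of sites are disjoint), so no correlation inequality is needed at all.
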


\begin{proof}
On one hand, we have
$$\PPP(A_{j,\sigma}(n_1,n_3)) \leq \PPP(A_{j,\sigma}(n_1,n_2) \cap A_{j,\sigma}(n_2,n_3)) = \PPP(A_{j,\sigma}(n_1,n_2)) \PPP(A_{j,\sigma}(n_2,n_3))$$
by independence of the events $A_{j,\sigma}(n_1,n_2)$ and $A_{j,\sigma}(n_2,n_3)$.

On the other hand, we may assume that $n_2 \geq 8 n_1$. Then for some $\eta_0$, $I_{\eta_0}$, the previous results (separation and extendability) allow to use the quasi-multiplicativity for $\Att$ events (item \ref{quasi_mult2}. of Proposition \ref{easy_prop}):
\begin{align*}
\PPP(A_{j,\sigma}(n_1,n_2)) \PPP(A_{j,\sigma}(n_2,n_3)) & \asymp \PPP(A_{j,\sigma}(n_1,n_2/4)) \PPP(A_{j,\sigma}(n_2,n_3))\\
& \asymp \PPP(\Att_{j,\sigma}^{. / \eta_0,I_{\eta_0}}(n_1,n_2/4)) \PPP(\Att_{j,\sigma}^{\eta_0,I_{\eta_0} / .}(n_2,n_3))\\
& \asymp \PPP(A_{j,\sigma}(n_1,n_3)).
\end{align*}
\end{proof}

\subsubsection*{Arms with defects}

In some situations, the notion of arms that are completely monochromatic is too restrictive, and the following question arises quite naturally: do the probabilities change if we allow the arms to present some (fixed) number of ``defects'', \emph{ie} sites of the opposite color?

We define $A^{(d)}_{j,\sigma}(n,N)$ the event that there exist $j$ arms $a_1,\ldots,a_j$ from $\partial S_n$ to $\partial S_N$ with the property: for any $i \in \{1,\ldots,j\}$, $a_i$ contains at most $d$ sites of color $\tilde{\sigma}_i$. The quasi-multiplicativity property entails the following result, which will be needed for the proof of Theorem \ref{armnear}:

\begin{proposition} \label{defects}
Let $j \geq 1$ and $\sigma \in \Sj$. Fix also some number $d$ of defects. Then we have
\begin{equation}
\PPP\big( A^{(d)}_{j,\sigma}(n,N) \big) \asymp (1+\log(N/n))^d \PPP\big( A_{j,\sigma}(n,N) \big)
\end{equation}
uniformly in $p$, $\PPP$ between $\PP_p$ and $\PP_{1-p}$ and $n_0(j) \leq n \leq N \leq L(p)$.
\end{proposition}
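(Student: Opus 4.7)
The plan is to exploit the dyadic decomposition of $S_{n,N}$ into the sub-annuli $A_k := S_{2^k, 2^{k+1}}$ for $\lceil \log_2 n \rceil \leq k \leq \lfloor \log_2 N \rfloor$, giving $K \asymp \log(N/n)$ scales. The logarithmic factor $(1+\log(N/n))^d$ will arise purely from the combinatorics of distributing defects into these dyadic annuli, while each dyadic annulus contributes its standard arm probability up to a universal constant; the two bounds will then be assembled via quasi-multiplicativity together with Theorem \ref{armsep}.

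\textbf{Lower bound.} I would fix one arm, say $a_{i_0}$, and for each strictly increasing tuple $k_1 < \cdots < k_d$ of dyadic scales construct a sub-event $F_{k_1,\ldots,k_d}$ of $A^{(d)}_{j,\sigma}(n,N)$ in which $a_{i_0}$ carries exactly one defect in each $A_{k_s}$ and is monochromatic outside these annuli, while the remaining arms are monochromatic throughout. The sub-events are made pairwise disjoint by enforcing, say, that the innermost defect of $a_{i_0}$ lies in $A_{k_1}$, the next in $A_{k_2}$, and so on. Within each $A_{k_s}$, producing one defect on the arm costs only a multiplicative constant: fix a site to be of color $\tilde{\sigma}_{i_0}$ (contributing a factor bounded away from zero in $\hat{\PP}$ since the latter lies between $\PP_p$ and $\PP_{1-p}$) and reconnect the arm through this site using an FKG/RSW construction in the bounded-aspect-ratio annulus $A_{k_s}$. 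Quasi-multiplicativity and Theorem \ref{armsep} then glue the scales and give $\PPP(F_{k_1,\ldots,k_d}) \asymp \PPP(A_{j,\sigma}(n,N))$ uniformly in the tuple. Summing over the $\binom{K}{d} \asymp (1+\log(N/n))^d$ tuples delivers the lower bound.

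\textbf{Upper bound.} To any realization of $A^{(d)}_{j,\sigma}(n,N)$ I would attach a \emph{signature} $\mathbf{d} = (d_i^{(k)})$ recording how many defects on arm $a_i$ lie in $A_k$; by hypothesis $\sum_k d_i^{(k)} \leq d$, so at most a bounded number of annuli carry any defect. For a fixed signature the events in distinct $A_k$ are independent (site percolation on disjoint vertex sets), and the factor coming from a defect-free $A_k$ is just $\PPP(A_{j,\sigma}(2^k,2^{k+1}))$. For each $A_k$ bearing defects, a \emph{repair} argument — flipping the (boundedly many) defect sites to their intended colors turns the configuration into a plain arm event — costs only a multiplicative constant depending on $p$ and $d$, so the corresponding factor remains $\asymp \PPP(A_{j,\sigma}(2^k,2^{k+1}))$. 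Quasi-multiplicativity then yields $\PPP(\text{signature } \mathbf{d}) = O\bigl(\PPP(A_{j,\sigma}(n,N))\bigr)$, and summing over the $O\bigl((1+\log(N/n))^d\bigr)$ possible signatures completes the bound.

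\textbf{Main obstacle.} The delicate step is the local in-annulus analysis: one must argue that the arm event with a prescribed, nonzero defect pattern inside a single $A_k$ has probability of the same order as the plain arm event, \emph{and} that its arms arrive at $\partial S_{2^k}$ and $\partial S_{2^{k+1}}$ well-separated enough for quasi-multiplicativity to be applied across scales. This is precisely where Theorem \ref{armsep}, the extendability part of Proposition \ref{easy_prop}, and the FKG-type gluing lemma used to prove it are indispensable: they guarantee that the single-site perturbations used both in the lower-bound construction and in the upper-bound repair step preserve the separation structure needed to glue the dyadic contributions together.
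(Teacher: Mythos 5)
Your lower bound follows the same blueprint as the paper's: construct pairwise disjoint sub-events of $A^{(d)}_{j,\sigma}(n,N)$ indexed by a choice of $d$ dyadic defect annuli, each of probability comparable to $\PPP(A_{j,\sigma}(n,N))$, and then sum over the $\binom{K}{d}$ choices. The only cosmetic difference is that the paper places a defect on every arm inside each chosen annulus, while you place all defects on a single arm $a_{i_0}$; both yield disjointness and the same count, and both rely on Theorem \ref{armsep} plus quasi-multiplicativity plus FKG/RSW gluing, with the quasi-multiplicativity constant compounding only $O(d)$ times. So this half is fine and essentially the paper's argument.

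The upper bound is where your proposal diverges, and where there is a genuine gap. You decompose the event into what happens in \emph{every} dyadic annulus $A_k$ and then write $\PPP(\text{signature }\mathbf{d}) \leq \prod_k (\text{factor in } A_k)$, followed by ``quasi-multiplicativity then yields $\PPP(\text{signature }\mathbf{d}) = O(\PPP(A_{j,\sigma}(n,N)))$.'' This last step does not hold: by independence of the disjoint annuli, one has the \emph{trivial} inequality $\PPP(A_{j,\sigma}(n,N)) \leq \prod_k \PPP(A_{j,\sigma}(2^k,2^{k+1}))$, so the full product is an upper bound on the arm probability, not the other way around; and quasi-multiplicativity applied across all $K \asymp \log(N/n)$ scales only controls the ratio up to a factor $C^K$, which is polynomial in $N/n$, far too large. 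You must instead cut the annulus only at the boundaries of the (at most $jd$, hence boundedly many) defect-carrying annuli, so that quasi-multiplicativity is invoked a \emph{bounded} number of times and the constants compound only to $C^{O(jd)}$. This is precisely what the paper's induction on $d$ achieves: it peels off a single ``first-defect'' dyadic annulus per level, paying one quasi-multiplicativity constant per level of the induction, $d$ levels in all. A secondary point: your claim that there are $O((1+\log(N/n))^d)$ signatures does not follow from the stated hypothesis $\sum_k d_i^{(k)} \leq d$ for each of the $j$ arms, which a priori gives $O((1+\log(N/n))^{jd})$; this matches a subtlety already latent in the paper's statement, but you should at least flag it rather than assert $K^d$ without justification. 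Finally, the ``repair'' argument (flipping the defect sites back) is unnecessary and not quite well-posed as stated — since the defect sites are not fixed in advance, the flip map is many-to-one — but this is harmless: in a bounded-aspect-ratio annulus the defective crossing probability is trivially $\leq 1 \asymp$ the clean crossing probability, which is all you need there.
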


Actually, we will only need the upper bound on $\PPP\big( A^{(d)}_{j,\sigma}(n,N) \big)$. For instance, we will see in the next section that the arm events decay like power laws at the critical point. This proposition thus implies, in particular, that the ``arm with defects'' events are described by the same exponents: allowing defects just adds a logarithmic correction.

\begin{proof}
We introduce a logarithmic division of the annulus $S_{n,N}$: we take $k$ and $K$ such that $2^{k-1} < n \leq 2^k$ and $2^K \leq N < 2^{K+1}$. Roughly speaking, we ``take away'' the annuli where the defects take place, and ``glue'' the pieces of arms in the remaining annuli by using the quasi-multiplicativity property.

Let us begin with the upper bound: we proceed by induction on $d$. The property clearly holds for $d=0$. Take some $d \geq 1$: by considering the first annuli $S_{2^i,2^{i+1}}$ where a defect occurs, we get
\begin{equation}
\PPP(A^{(d)}_{j,\sigma}(n,N)) \leq \sum_{i=k}^{K-1} \PPP(A_{j,\sigma}(2^k,2^i)) \PPP(A^{(d-1)}_{j,\sigma}(2^{i+1},2^K)).
\end{equation}
We have $\PPP(A^{(d-1)}_{j,\sigma}(2^{i+1},2^K)) \leq C_{d-1} (1+\log(N/n))^{d-1} \PPP(A_{j,\sigma}(2^{i+1},2^K))$ thanks to the induction hypothesis, and by quasi-multiplicativity,
\begin{align*}
\PPP(A^{(d)}_{j,\sigma}(n,N)) & \leq (1+\log(N/n))^{d-1} C_{d-1} \sum_{i=k}^{K-1} \PPP(A_{j,\sigma}(2^k,2^i)) \PPP(A_{j,\sigma}(2^{i+1},2^K))\\
& \leq C_{d-1} (1+\log(N/n))^{d-1} \sum_{i=k}^{K-1} C' \PPP(A_{j,\sigma}(2^k,2^K))\\
& \leq C_d (1+\log(N/n))^{d-1} (K-k) \PPP(A_{j,\sigma}(2^k,2^K)),
\end{align*}
which gives the desired upper bound.

For the lower bound, note that for any $k \leq i_0 < i_1 < \ldots < i_d < i_{d+1} = K$, $A_{j,\sigma}(n,N)$ $\supseteq$ $A_{j,\sigma}(2^{k-1},2^{K+1})$ $\supseteq$ $A_{j,\sigma}(2^{k-1},2^{K+1})$ $\cap$ $\{$Each of the $j$ arms has exactly one defect in each of the annuli $S_{2^{i_r},2^{i_r+1}}$$\}$, so that for $K-k \geq d+1$,
\begin{align*}
\PPP(A^{(d)}_{j,\sigma}(n,N)) & \geq \sum_{k=i_0 < i_1 < i_2 < \ldots < i_d < i_{d+1}=K} C_d \prod_{r=0}^d \PPP(A_{j,\sigma}(2^{i_r+1},2^{i_{r+1}}))\\
& \geq C'_d \binom{K-k-1}{d} \PPP(A_{j,\sigma}(2^{k-1},2^{K+1}))\\
& \geq C''_d (K-k)^d \PPP(A_{j,\sigma}(2^{k-1},2^{K+1})),
\end{align*}
and our lower bound follows.
\end{proof}

\subsubsection*{Remark: more general annuli}

We will sometimes need to consider more general arm events, in annuli of the form $R \setminus r$, for non-necessarily concentric parallelograms $r \subseteq \mathring{R}$. Items \ref{extend}. and \ref{quasi_mult2}. of Proposition \ref{easy_prop} can easily be extended. Separateness and well-separateness can be defined in the same way for these arm events, and for any $\tau>1$, we can get results uniform in the usual parameters and in parallelograms $r$, $R$ such that $S_n \subseteq r \subseteq S_{\tau n}$ and $S_{N/\tau} \subseteq R \subseteq S_{N}$ for some $n,N \leq L(p)$:
\begin{equation} \label{non_concentric}
\PPP(\partial r \leadsto^{j,\sigma} \partial R) \asymp \PPP(\partial S_n \leadsto^{j,\sigma} \partial S_N),
\end{equation}
and similarly with separateness conditions on the external boundary or on the internal one.

\subsection{Arms in the half-plane}

So far, we have been interested in arm events in the whole plane: we can define in the same way the event $B_{j,\sigma}(n,N)$ that there exist $j$ arms that \emph{stay in the upper half-plane $\mathbb{H}$}, of colors prescribed by $\sigma \in \Sj$ and connecting $\partial S'_n$ to $\partial S'_N$, with the notation $\partial S'_n = (\partial S_n) \cap \mathbb{H}$. These events appear naturally when we look at arms near a boundary.

For the sake of completeness, let us just mention that all the results stated here remain true for arms in the half-plane. In fact, there is a natural way to order the different arms, which makes this case easier. We will not use these events in the following, and we leave the details to the reader.

\section{Consequences for critical percolation} \label{sec_critical}

When studying the phase transition of percolation, the critical regime plays a very special role. It possesses a strong property of \emph{conformal invariance} in the scaling limit. This particularity, first observed by physicists (\cite{P,BPZ1,BPZ2}), has been proved by Smirnov in \cite{Sm}, and later extended by Camia and Newman in \cite{CN1}. It allows to link the critical regime to the $SLE$ processes (with parameter $6$ here) introduced by Schramm in \cite{Sc}, and thus to use computations made for these processes (\cite{LSW1,LSW2}).

In the next sections, we will see why our description of critical percolation yields in turn a good description of near-critical percolation (which does not feature a priori any sort of conformal invariance), in particular how the characteristic functions behave through the phase transition.

\subsection{Arm exponents for critical percolation}

\subsubsection*{Color switching}

We focus here on the probabilities of arm events at the critical point. For arms in the half-plane, a nice combinatorial argument (noticed in \cite {AAD, SmW}) shows that once fixed the number $j$ of arms, prescribing the color sequence $\sigma$ does not change the probability. This is the so-called ``color exchange trick'':

\begin{proposition} \label{exchange2}
Let $j \geq 1$ be any fixed integer. If $\sigma,\sigma'$ are two color sequences, then for any $n'_0(j) \leq n \leq N$,
\begin{equation}
\PP_{1/2}(B_{j,\sigma}(n,N)) = \PP_{1/2}(B_{j,\sigma'}(n,N)).
\end{equation}
\end{proposition}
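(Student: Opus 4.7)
The strategy is to establish, for every $i \in \{1, \ldots, j-1\}$ and every color sequence $\sigma$, the ``tail-flip'' identity
$$\PP_{1/2}\bigl(B_{j,\sigma_1\ldots\sigma_i\sigma_{i+1}\ldots\sigma_j}(n,N)\bigr) = \PP_{1/2}\bigl(B_{j,\sigma_1\ldots\sigma_i\tilde{\sigma}_{i+1}\ldots\tilde{\sigma}_j}(n,N)\bigr).$$
Combined with the trivial global color-flip symmetry $\PP_{1/2}(B_{j,\sigma}) = \PP_{1/2}(B_{j,\tilde{\sigma}})$, composing the tail flip at $i-1$ with the tail flip at $i$ produces a flip of only the single coordinate at position $i$; iterating such single-coordinate flips transforms any color sequence into any other, which gives the proposition. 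Note that this reduction is what forces the half-plane setting: in the whole plane the arms only admit a cyclic order, so the argument below has no natural starting point.

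To prove the tail-flip identity at a given $i$, I freeze the first $i$ arms through an exploration that reveals no site lying strictly to the right of the $i$-th arm. Because the arms lie in $\mathbb{H}$, they carry a canonical left-to-right order. On the event $B_{j,\sigma}(n,N)$, define $\gamma_\ell$ recursively for $\ell = 1, \ldots, i$ as the leftmost $\sigma_\ell$-colored arm from $\partial S'_n$ to $\partial S'_N$ lying strictly to the right of $\gamma_1, \ldots, \gamma_{\ell-1}$. Each $\gamma_\ell$ is found by an interface-type exploration on the hexagonal dual lattice that proceeds from one endpoint on the half-plane boundary and hugs the left side of a candidate $\sigma_\ell$-arm: at every step the exploration queries only the color of the next hexagon along the interface, so it reveals only the sites of $\gamma_\ell$ itself together with the sites of the ``left region'' $L_\ell$ squeezed between $\gamma_{\ell-1}$ and $\gamma_\ell$. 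After $i$ such explorations the revealed set $R_i := \bigcup_{\ell \leq i}(\gamma_\ell \cup L_\ell)$ is measurable with respect to the colors of the sites it contains, and its complement $D_i$ in the half-plane annulus lies strictly to the right of $\gamma_i$.

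Conditionally on the outcome of this exploration, the configuration in $D_i$ is still i.i.d.\ uniform on $\{B,W\}$, and the event $B_{j,\sigma}(n,N)$ reduces to a deterministic condition on that configuration: the existence in $D_i$ of $j-i$ disjoint arms from $\partial S'_n$ to $\partial S'_N$ with prescribed colors $\sigma_{i+1}, \ldots, \sigma_j$, in the left-to-right order consistent with $\gamma_i$ bounding $D_i$ on the left. Since the uniform product measure on $D_i$ is invariant under the site-wise color flip $B \leftrightarrow W$, the conditional probability of this event is unchanged when $\sigma_{i+1}, \ldots, \sigma_j$ is replaced by $\tilde{\sigma}_{i+1}, \ldots, \tilde{\sigma}_j$. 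Integrating over the exploration outcome yields the tail-flip identity.

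The main obstacle is the measurability claim underpinning the exploration: one has to verify that the algorithm for locating $\gamma_\ell$ can really be implemented without ever querying a site strictly to the right of $\gamma_\ell$, and that on $B_{j,\sigma}$ it successfully produces the leftmost $\sigma_\ell$-arm as soon as any $\sigma_\ell$-arm exists to the right of the previously frozen ones. This is the standard but slightly delicate check for interface-type explorations in site percolation on $\mathbb{T}$, where the self-matching property of the triangular/hexagonal pair makes the argument clean. Once this measurability is in place, the rest of the proof is an immediate application of the $\PP_{1/2}$-symmetry together with elementary conditioning.
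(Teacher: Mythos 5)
Your proof is correct and takes essentially the same approach as the paper's: condition on the $i$ leftmost arms via a left-to-right exploration, flip the unexplored region using the $p=1/2$ symmetry, and reduce the general claim to iterated tail flips. Your treatment is simply more explicit -- in particular the observation that composing tail flips at $i-1$ and $i$ (together with the global flip for $i=1$) yields single-coordinate flips, and the discussion of the exploration's measurability -- where the paper leaves these as claims ``for the reader''.
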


\begin{proof}
The proof relies on the fact that there is a canonical way to order the arms. If we condition on the $i$ left-most arms, percolation in the remaining domain is unbiased, so that we can ``flip'' the sites there: for any color sequence $\sigma$, if we denote by
$$\tilde{\sigma}^{(i)}=(\sigma_1,\ldots,\sigma_i,\tilde{\sigma}_{i+1},\ldots,\tilde{\sigma}_{j})$$
the sequence with the same $i$ first colors, and the remaining ones flipped, then
$$\PP_{1/2}(B_{j,\sigma}(n,N)) = \PP_{1/2}(B_{j,\tilde{\sigma}^{(i)}}(n,N)).$$
It is not hard to convince oneself that for any two sequences $\sigma, \sigma'$, we can go from $\sigma$ to $\sigma'$ in a finite number of such operations.
\end{proof}

This result is not as direct in the whole plane case, since there is no canonical ordering any more. However, the argument can be adapted to prove that the probabilities change only by a constant factor, as long as there is an interface, \emph{ie} as long as $\sigma$ contains at least one white arm and one black arm.

\begin{proposition} \label{exchange}
Let $j \geq 1$ be any fixed integer. If $\sigma,\sigma' \in \Sj$ are two \emph{non-constant} color sequences (\emph{ie} both colors are present), then
\begin{equation}
\PP_{1/2}(A_{j,\sigma}(n,N)) \asymp \PP_{1/2}(A_{j,\sigma'}(n,N))
\end{equation}
uniformly in $n_0(j) \leq n \leq N$.
\end{proposition}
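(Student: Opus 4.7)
The plan is to mimic the proof of Proposition \ref{exchange2} by reducing the whole-plane non-constant case to a half-plane situation via an interface exploration. The key observation is that a non-constant color sequence forces the existence of an interface between two cyclically adjacent arms of opposite colors, and this interface plays the role of the boundary of $\mathbb{H}$ in the half-plane color-exchange trick.

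First, by Theorem \ref{armsep}, I can replace $A_{j,\sigma}(n,N)$ by the well-separated event $\Att_{j,\sigma}^{\eta,I/\eta',I'}(n,N)$ for some fixed landing sequences $I$, $I'$ of fixed size, up to a universal multiplicative constant. Since $\sigma \in \Sj$ is non-constant, I pick an index $i$ with $\sigma_i=B$ and $\sigma_{i+1}=W$. In the macroscopic gap between the landing intervals $I'_i$ and $I'_{i+1}$ on $\partial S_N$, I start an exploration path that follows the boundary between black and white sites in the hexagonal dual, with black kept to the $I'_i$-side and white to the $I'_{i+1}$-side. On the event that the prescribed arms exist, this path must travel across the annulus and terminate in the corresponding gap on $\partial S_n$ between $I_i$ and $I_{i+1}$.

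I then condition on the realization $\gamma$ of this interface, which reveals only the sites on $\gamma$ and their immediate neighbors (colored so as to realize the two sides of the interface). By the elementary domain Markov property for the product measure $\PP_{1/2}$, the remaining unexplored sites are still i.i.d.\ Bernoulli$(1/2)$. The complement of the explored region in $S_{n,N}$, cut along $\gamma$, is topologically simply connected and contains the remaining arms $a_{i+2},\ldots,a_{i-1}$ read cyclically; these arms can now be canonically ordered from one side of $\gamma$ to the other. In this half-plane-like cut domain I apply verbatim the argument of Proposition \ref{exchange2}: condition on the first $k$ arms, on the other side of the revealed region percolation is still unbiased, so I may flip the colors of the remaining arms. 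This shows $\PP_{1/2}(A_{j,\sigma}) \asymp \PP_{1/2}(A_{j,\sigma''})$ for any $\sigma''$ with $\sigma''_i=B$, $\sigma''_{i+1}=W$.

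To reach a completely arbitrary non-constant target $\sigma'$, I combine the above with the analogous operation at a WB transition (which is always cyclically present in a non-constant sequence). A short combinatorial check shows that, by iterating flips at a BW transition and flips at a WB transition, any non-constant $\sigma$ can be transformed into any other non-constant $\sigma'$ in a bounded number of steps, each step costing only a universal factor. The main obstacle is the rigorous interface step: I must define $\gamma$ as the output of a sequential exploration stopped upon hitting $\partial S_n$, so that conditioning on $\gamma$ reveals a precisely specified set of sites and the event $A_{j,\sigma}$ factors as $\{\gamma \text{ exists}\} \cap \{j-2\text{ appropriately colored arms in the cut domain}\}$ — this is the technical heart of the argument and is where one uses the well-separateness provided by Theorem \ref{armsep} to guarantee that the interface is not constrained in some degenerate way near $\partial S_n$ and $\partial S_N$.
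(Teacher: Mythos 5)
Your proposal is essentially the paper's own argument: after using Theorem \ref{armsep} to fix landing areas, the paper also locates a cyclically adjacent BW pair of arms, runs an exploration process from a point of $\partial S_N$ between the corresponding landing intervals to reveal the interface (equivalently, the closest black/white arms on either side of it), and then observes that the unexplored region is a simply connected, half-plane-like domain in which the remaining arms are linearly ordered and the half-plane color-exchange trick applies verbatim. The only cosmetic differences are that the paper works with the weaker event $\Ab_{j,\sigma}^{I/.}$ rather than $\Att$, and that it dispenses with your explicit ``BW vs.\ WB'' bookkeeping by remarking that one may condition on any set of consecutive arms containing the chosen BW pair before flipping.
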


\begin{proof}
Assume that $\sigma_1 = B$ and $\sigma_2 = W$, and fix some landing sequence $I$. If we replace the event $A_{j,\sigma}(n,N)$ by the strengthened event $\Ab_{j,\sigma}^{I/.}(n,N)$, we are allowed to condition on the black arm arriving on $I_1$ and on the white arm arriving on $I_2$ that are closest to each other: if we choose for instance $I$ such that the point $(N,0)$ is between $I_1$ and $I_2$, these two arms can be determined via an exploration process starting at $(N,0)$. We can then ``flip'' the remaining region. More generally, we can condition on any set of consecutive arms including these two arms, and the result follows for the same reasons as in the half-plane case.
\end{proof}

We would like to stress the fact that for the reasoning, we crucially need two arms of opposite colors. In fact, the preceding result is expected to be false if $\sigma$ is constant and $\sigma'$ non-constant (the two probabilities \emph{not} being of the same order of magnitude), which is quite surprising at first sight.

\subsubsection*{Derivation of the exponents}

The link with $SLE_6$ makes it possible to prove the existence of the (multichromatic) ``arm exponents'', and derive their values (\cite{LSW4,SmW}).

\begin{theorem} \label{armcrit}
Fix some $j \geq 1$. Then for any non-constant color sequence $\sigma \in \Sj$,
\begin{equation}
\PP_{1/2}\big(A_{j,\sigma}(n_0(j),N)\big) \approx N^{-\alpha_j}
\end{equation}
when $N \to \infty$, with
\begin{itemize}
\item[$\bullet$] $\alpha_1 = 5/48$,
\item[$\bullet$] and for $j \geq 2$, $\alpha_j = (j^2-1)/12$.
\end{itemize}
\end{theorem}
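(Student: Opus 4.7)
The plan is to transfer the estimate to the scaling limit, where the relevant quantities become probabilities for the chordal $SLE_6$ process that have been computed explicitly in \cite{LSW1,LSW2,LSW4,SmW}. The role of the present section is only to ensure that the passage from discrete arm events to those limiting quantities is legitimate.

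First I would reduce to a convenient color sequence. For $j=1$ the sequence is forced, but for $j\ge 2$ Proposition \ref{exchange} shows that $\PP_{1/2}(A_{j,\sigma}(n_0(j),N))$ and $\PP_{1/2}(A_{j,\sigma'}(n_0(j),N))$ differ only by multiplicative constants for any two non-constant $\sigma,\sigma'$, so I may take $\sigma$ to be the alternating sequence $BWBW\cdots$ (with one extra repetition if $j$ is odd). The advantage of this choice is that a set of $j$ alternating arms in $S_{n_0(j),N}$ essentially encodes the existence of $j$ percolation interfaces between black and white clusters crossing the annulus, and it is exactly these interfaces that admit an $SLE_6$ scaling limit, by Smirnov's theorem \cite{Sm} (and its extension to the full scaling limit in \cite{CN1}).

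Next, after rescaling by $N$, I would translate $A_{j,\sigma}(n_0(j),N)$ into an event concerning a collection of interfaces in a fixed annulus with inner radius $\rho = n_0(j)/N \to 0$. The separation results of Section \ref{sec_separation} are crucial here: Theorem \ref{armsep} guarantees that restricting to macroscopically separated landing sequences on both boundaries changes the probability only by a multiplicative constant, so the rescaled event passes to a genuinely non-degenerate event for the limiting $SLE_6$ interfaces. The asymptotic probability of this limiting event is computed in \cite{SmW,LSW4} via an explicit stochastic-calculus computation for $SLE_6$, yielding decay of order $\rho^{(j^2-1)/12}$ as $\rho \to 0$. Combined with the quasi-multiplicativity property (which allows iteration along dyadic scales) and the a-priori polynomial bounds \eqref{apriori} (which provide uniform tightness), this gives the logarithmic equivalence $\PP_{1/2}(A_{j,\sigma}(n_0(j),N)) \approx N^{-(j^2-1)/12}$.

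For $j=1$ the same framework applies, but the relevant $SLE_6$ quantity is different: the one-arm exponent $5/48$ is extracted from the probability that a radial $SLE_6$ in the disc approaches a small neighborhood of its interior target point, again computed in \cite{LSW4}. The \emph{main obstacle} is entirely external to the present paper: it consists of (i) the proof of convergence of discrete percolation interfaces to $SLE_6$, and (ii) the explicit $SLE_6$ computations giving the numerical values of the exponents. Once those two inputs are taken as black boxes, the discrete content of the proof reduces to the color-switching argument together with the separation, extendability and quasi-multiplicativity machinery of Section \ref{sec_separation}.
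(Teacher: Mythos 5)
Your proposal takes essentially the same route as the paper's (very brief) proof sketch: both rely on convergence of the discrete exploration process to $SLE_6$, on the explicit $SLE_6$ exponent computations in \cite{LSW1,LSW2,LSW4,SmW}, and on quasi-multiplicativity to interpolate between a fixed modulus limit and the full range $n_0(j) \ll N$; your mention of the color-switching and separation inputs is an accurate unpacking of what the paper leaves implicit. One small imprecision worth flagging: the relevant continuum object is the \emph{radial} $SLE_6$ in the disc (and its disconnection exponents) for all $j$, not chordal $SLE_6$ as your first paragraph suggests, and the $SLE_6$ step must be read as ``for each \emph{fixed} inner-to-outer ratio $\eta$, $\PP_{1/2}(A_{j,\sigma}(\eta N, N)) \to g_j(\eta)$ as $N\to\infty$'', with $g_j(\eta)\sim\eta^{\alpha_j}$ as $\eta\to 0$ a separate limit -- letting the inner radius $n_0(j)/N$ tend to $0$ directly is not legitimate (the convergence to $SLE_6$ is not uniform in the modulus); it is precisely quasi-multiplicativity that bridges these two limits, as you do invoke.
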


Let us sketch very briefly how it is proved. Consider the discrete (radial) exploration process in a unit disc: using the property of conformal invariance in the scaling limit, we can prove that this process converges toward a radial $SLE_6$, for which we can compute disconnection probabilities. It implies that
$$\PP_{1/2}(A_{j,\sigma}(\eta n,n)) \to g_j(\eta),$$
for some function $g_j(\eta) \sim \eta^{\alpha_j}$ as $\eta \to 0$. Then, the quasi-multiplicativity property in concentric annuli of fixed modulus provides the desired result.

As mentioned, this theorem is believed to be \emph{false} for constant $\sigma$, \emph{ie} when the arms are all of the same color. In this case, the probability should be smaller, or equivalently the exponent (assuming its existence) larger. Hence for each $j=2,3,\ldots$, there are two different arm exponents, the multichromatic $j$-arm exponent $\alpha_j$ given by the previous formula (most often simply called the $j$-arm exponent) and the monochromatic $j$-arm exponent $\alpha'_j$, for which no closed formula is currently known, nor even predicted. The only result proved so far concerns the case $j=2$: as shown in \cite{LSW4}, the monochromatic $2$-arm exponent can be expressed as the leading eigenvalue of some (complicated) differential operator. Numerically, it has been found (see \cite{AAD}) to be approximately $\alpha'_2 \simeq 0.35\ldots$

Note also that the derivation using $SLE_6$ only provides a logarithmic equivalence. However, there are reasons to believe that a stronger equivalence holds, a ``$\asymp$'': for instance we know that this is the case for the ``universal exponents'' computed in the next sub-section.

We will often relate events to combinations of arm events, that in turn can be linked (see next section) to arm events at the critical point $p=1/2$. It will thus be convenient to introduce the following notation, with $\sigma_j = BWBW\ldots$: for any $n_0(j) \leq n < N$,
\begin{equation}
\pi_j(n|N) := \PP_{1/2}(A_{j,\sigma_j}(n,N))
\end{equation}
($\asymp \PP_{1/2}(A_{j,\sigma}(n,N))$ for any non-constant $\sigma$), and in particular
\begin{equation}
\pi_j(N) := \PP_{1/2}(A_{j,\sigma_j}(n_0(j),N)) \quad (\approx N^{-\alpha_j}).
\end{equation}

Note that with this notation, the a-priori bound and the quasi-multiplicativity property take the aesthetic forms
\begin{equation}\label{pi1}
C (n/N)^{\alpha_j} \leq \pi_j(n|N) \leq C' (n/N)^{\alpha'},
\end{equation}
\begin{equation}\label{pi2}
\text{and} \:\: \pi_j(n_1|n_2) \pi_j(n_2|n_3) \asymp \pi_j(n_1|n_3).
\end{equation}

Let us mention that we can derive in the same way exponents for arms in the upper half-plane, the ``half-plane exponents'':

\begin{theorem} \label{armcrit2}
Fix some $j \geq 1$. Then for any sequence of colors $\sigma$,
\begin{equation}
\PP_{1/2}\big(B_{j,\sigma}(n'_0(j),N)\big) \approx N^{-\beta_j}
\end{equation}
when $N \to \infty$, with
$$\beta_j = j(j+1)/6.$$
\end{theorem}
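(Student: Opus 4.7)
The plan is to mimic the strategy sketched for Theorem \ref{armcrit}, replacing the radial exploration in a disc by a chordal exploration in a half-disc. First, by the half-plane color-switching Proposition \ref{exchange2}, the probability $\PP_{1/2}(B_{j,\sigma}(n,N))$ does not depend on the chosen sequence $\sigma$, so I may fix $\sigma = \sigma_j = BWBW\ldots$ once and for all. This reduction is crucial because it allows the $j$ arms to be interpreted as the traces of $j-1$ interfaces in $\mathbb{H}\cap S_N$, naturally ordered from left to right along the real axis.

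Second, I would work in a fixed semi-annular domain, say $\mathbb{H}\cap(S_N\setminus\mathring{S}_{\eta N})$ for a small ratio $\eta$. One runs successive discrete chordal exploration processes between appropriate boundary conditions on the real axis, conditioning at each step on the previously discovered left-most interfaces. By Smirnov's conformal invariance theorem \cite{Sm}, combined with Camia--Newman \cite{CN1} for the joint convergence needed to handle all interfaces simultaneously, these processes converge in the scaling limit toward chordal $SLE_6$ curves with suitable force points. The event $B_{j,\sigma_j}(\eta N, N)$ is translated in the limit into a non-disconnection event in a half-annulus of modulus $\log(1/\eta)$, whose probability has been computed by Lawler--Schramm--Werner \cite{LSW1,LSW2,SmW} via a cascade relation for $SLE_6$. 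This gives
\begin{equation*}
\PP_{1/2}(B_{j,\sigma_j}(\eta N, N)) \xrightarrow[N\to\infty]{} h_j(\eta),
\end{equation*}
for some continuous function $h_j$ satisfying $h_j(\eta) \sim C\, \eta^{\beta_j}$ as $\eta \to 0$ with $\beta_j = j(j+1)/6$.

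Third, I would invoke the half-plane analogues of the separation and quasi-multiplicativity results. As remarked at the end of Section \ref{sec_separation}, the arguments of Theorem \ref{armsep} and its corollaries apply in the half-plane (with the simplification coming from the natural order on $\partial S'_n$), so that
\begin{equation*}
\PP_{1/2}(B_{j,\sigma_j}(n'_0(j),N)) \asymp \prod_{k=k_0}^{K} \PP_{1/2}(B_{j,\sigma_j}(2^{k-1}, 2^k)),
\end{equation*}
where $2^K \leq N < 2^{K+1}$ and $2^{k_0-1} \leq n'_0(j) < 2^{k_0}$. Each factor on the right-hand side tends to $h_j(1/2) \in (0,1)$ by the scaling-limit argument of the previous paragraph. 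Taking logarithms, summing, and combining with $h_j(\eta)\sim C\eta^{\beta_j}$ applied on the smallest (or largest) scale, one obtains the announced logarithmic equivalence $\PP_{1/2}(B_{j,\sigma}(n'_0(j),N)) \approx N^{-\beta_j}$.

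The hard part is really the input from $SLE_6$: convergence of the discrete exploration to a chordal $SLE_6$ in a domain with changing boundary conditions, and the actual computation of the disconnection exponents $\beta_j = j(j+1)/6$. Both are borrowed from \cite{Sm,LSW1,LSW2,SmW} and are not reproved here; the present paper contributes only the discrete tools (color switching, separation, quasi-multiplicativity) that allow one to upgrade the ``one-scale'' $SLE_6$ statement into a global power-law estimate valid for all $n'_0(j)\leq n\leq N$.
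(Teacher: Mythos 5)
The paper does not actually give a proof of Theorem~\ref{armcrit2}: it only asserts it as derivable ``in the same way'' as Theorem~\ref{armcrit}, for which only a very brief sketch is given. Your proposal is therefore an attempt to fill in that sketch, and its overall architecture (color switching to fix $\sigma$, convergence of the discrete exploration to chordal $SLE_6$ and computation of the half-plane disconnection exponent, then quasi-multiplicativity to upgrade to a power law) is consistent with what the paper indicates.

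There is, however, a genuine gap in the final quantitative step. The displayed claim
\begin{equation*}
\PP_{1/2}(B_{j,\sigma_j}(n'_0(j),N)) \asymp \prod_{k=k_0}^{K} \PP_{1/2}(B_{j,\sigma_j}(2^{k-1}, 2^k))
\end{equation*}
cannot hold with universal constants: iterating quasi-multiplicativity over the $K-k_0\sim\log_2 N$ dyadic scales multiplies up a factor $C^{K-k_0}\sim N^{\log_2 C}$, so the two sides are not of the same order of magnitude as $N\to\infty$. Moreover, even if the product were accepted as an approximation, each factor only converges (as $k\to\infty$) to $h_j(1/2)$, and there is no reason for $h_j(1/2)$ to equal $2^{-\beta_j}$; the asymptotics $h_j(\eta)\sim C\eta^{\beta_j}$ is a statement as $\eta\to 0$ and cannot be ``applied on the smallest or largest scale'' of a fixed-ratio decomposition. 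The correct way to close the argument is to decompose the semi-annulus into concentric semi-annuli of large modulus $\eta^{-1}$ (not $2$). Quasi-multiplicativity then contributes an error of order $O(\log C)$ per scale to $\log\PP_{1/2}(B_{j,\sigma_j}(n,N))$, while the main contribution per scale is $\log h_j(\eta) = -\beta_j\log(1/\eta)(1+o(1))$; taking $\eta$ small makes the error relatively negligible. One first proves the estimate for each fixed small $\eta$ and then lets $\eta\to 0$ to obtain the logarithmic equivalence $\approx N^{-\beta_j}$. This is what the phrase ``quasi-multiplicativity in concentric annuli of fixed modulus'' in the paper's sketch of Theorem~\ref{armcrit} is pointing at: the modulus is fixed during the iteration but must then be sent to infinity. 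As written, your proposal would deliver the exponent $-\log_2 h_j(1/2)$ (up to an additive polynomial-in-$N$ correction from the constants), which need not equal $\beta_j$.
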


\begin{remark}
As mentioned earlier, the triangular lattice is at present the only lattice for which conformal invariance in the scaling limit has been proved, and as a consequence the only lattice for which the existence and the values of the arm exponents have been established -- with the noteworthy exception of the three ``universal'' exponents that we are going to derive.
\end{remark}

\subsubsection*{Note: fractality of various sets}

These arm exponents can be used to measure the size (Hausdorff dimension) of various sets describing percolation clusters. In physics literature for instance (see e.g. \cite{AAD}), a set $S$ is said to be fractal of dimension $D_S$ if the density of points in $S$ within a box of size $n$ decays as $n^{-x_S}$, with $x_S = 2 - D_S$ (in 2D). The co-dimension $x_S$ is related to arm exponents in many cases:
\begin{itemize}
\item[$\bullet$] The $1$-arm exponent is related to the existence of long connections, from the center of a box to its boundary. It will thus measure the size of ``big'' clusters, like the incipient infinite cluster (IIC) as defined by Kesten (\cite{Ke2}), which scales as $n^{(2-5/48)}=n^{91/48}$.

\item[$\bullet$] The monochromatic $2$-arm exponent describes the size of the ``backbone'' of a cluster. The fact that this backbone is much thinner than the cluster itself was used by Kesten \cite{Ke5} to prove that the random walk on the IIC is sub-diffusive (while it has been proved to converge toward a Brownian Motion on a super-critical infinite cluster).

\item[$\bullet$] The multichromatic $2$-arm exponent is related to the boundaries (hulls) of big clusters, which are thus of fractal dimension $2-\alpha_2 = 7/4$.

\item[$\bullet$] The $3$-arm exponent concerns the external (accessible) perimeter of a cluster, which is the accessible part of the boundary: one excludes ``fjords'' which are connected to the exterior only by $1$-site wide passages. The dimension of this frontier is $2-\alpha_3 = 4/3$. These two latter exponents can be observed on random interfaces, numerically and in ``real-life'' experiments as well (see \cite{Sa1,Sa2} for instance).

\item[$\bullet$] As mentioned earlier, the $4$-arm exponent with alternating colors counts the pivotal (singly-connecting) sites (often called ``red'' sites in physics literature). This set can be viewed as the contact points between two distinct (large) clusters, its dimension is $2-\alpha_4=3/4$. We will relate this exponent to the correlation length exponent $\nu$ in Section \ref{charac}.
\end{itemize}

\subsection{Universal exponents}

We will now examine as a complement some particular exponents, for which heuristic predictions and elementary derivations exist, namely $\beta_2=1$, $\beta_3=2$ and $\alpha_5=2$. They are all integers, and they were established before the complete derivation using the $SLE_6$ (and actually they provide crucial a-priori estimates to prove the convergence toward $SLE_6$). Moreover, the equivalence that we get is stronger: we can replace the ``$\approx$'' by a ``$\asymp$''.

\begin{theorem} \label{armuniv}
When $N \to \infty$,
\begin{enumerate}
\item For any $\sigma \in \mathfrak{S}_2$,
$$\PP_{1/2}\big(B_{2,\sigma}(0,N)\big) \asymp N^{-1}.$$ \label{it2arms}

\item For any $\sigma \in \mathfrak{S}_3$,
$$\PP_{1/2}\big(B_{3,\sigma}(0,N)\big) \asymp N^{-2}.$$ \label{it3arms}

\item For any non-constant $\sigma \in \tilde{\mathfrak{S}}_5$,
$$\PP_{1/2}\big(A_{5,\sigma}(0,N)\big) \asymp N^{-2}.$$ \label{it5arms}
\end{enumerate}
\end{theorem}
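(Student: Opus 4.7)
The strategy for all three parts is the same: combine a counting argument in a bounded region with an explicit RSW construction to obtain matching upper and lower bounds. The inputs are the Russo--Seymour--Welsh theorem (Theorem \ref{thm_RSW}), the correlation inequalities (Harris--FKG, BK, and Reimer), and the separation and quasi-multiplicativity results from Section \ref{sec_separation}. No conformal invariance or SLE input is required, which is precisely why these exponents are called \emph{universal}. The integer values $1,2,2$ are crucial because they can be read off the expected number of sites in a box that host the prescribed arm event.

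For the upper bounds, I would introduce a random variable $X$ counting sites in an appropriate box at which the given arm configuration occurs at scale $\asymp N$, so that translation invariance gives $\EE[X] = |\mathrm{box}| \cdot \pi(N)$, and show $\EE[X] \leq C$ uniformly in $N$. For part (\ref{it5arms}), take $X_5 := \#\{x \in S_{N/2} : A_{5,BWBWB}(x, x + S_{N/2}) \text{ occurs}\}$; a contributing $x$ admits three disjoint black arms and two disjoint white arms reaching $\partial S_{N/2}(x)$, so Reimer's inequality applied to these five disjoint crossings, combined with the separation theorem of Section \ref{sec_separation} to decorrelate contributions from distant $x$'s, yields $\EE[X_5] \leq C$ and hence $N^2 \pi_5(N) \leq C$. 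For part (\ref{it2arms}), let $X_2$ count sites on the bottom edge of $[0,N]^2$ that host a $BW$ half-plane arm to the top; such sites are bottom endpoints of bichromatic interfaces and $X_2$ is dominated by the number of disjoint top-bottom crossings, whose expectation is bounded by BK, giving $N \pi^+_2(N) \leq C$. Part (\ref{it3arms}) is the subtlest: a naive single-count (or Russo) argument only yields $\pi^+_3(N) \leq C/N$, so one must extract the extra factor $N^{-1}$ via a sharpened observable --- for instance a suitably weighted sum over pairs of bottom-edge sites carrying joint multi-arm configurations --- and apply Reimer to the resulting disjoint arm system together with quasi-multiplicativity from Section \ref{sec_separation} to bound the total by $O(1)$.

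For the lower bounds, I would work scale by scale in dyadic annuli $S_{2^{k-1},2^k}$ around the relevant origin, realizing the required alternating-color arms via macroscopic RSW crossings and gluing them through monochromatic bridges using the generalized FKG inequality for locally-monotone events stated in Section \ref{sec_separation}. Naive chaining produces a polynomial with some uncontrolled exponent $\alpha_{\mathrm{RSW}}$; to match the sharp integer values, the construction must be arranged so that only a bounded number of RSW building blocks enter per scale, and the per-scale probability multiplied over $\log_2(N/n_0)$ scales yields exactly $N^{-\alpha}$ with $\alpha\in\{1,2\}$.

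The main obstacle will be part (\ref{it3arms}): identifying the right refined counting observable whose expectation captures the full factor $N^{-2}$ rather than $N^{-1}$, and rigorously verifying the resulting bound via Reimer and quasi-multiplicativity. A secondary obstacle, common to all three parts, is the tightness of the lower-bound construction: the scale-by-scale building blocks must be tuned so that the product over scales is exactly $\asymp N^{-\alpha}$, not some larger and easier exponent.
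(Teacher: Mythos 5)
Your proposal has the right ingredients (RSW, BK/Reimer, separation lemmas, first-moment counting) but misses the combinatorial mechanism that actually forces the integer values $1,2,2$, and the lower-bound strategy would not deliver the sharp exponents.

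The key insight in the paper's proof, which you need and do not have, is a \emph{uniqueness} argument rather than a generic first-moment bound. After fixing a specific landing sequence $I_1,\ldots,I_j$, one shows by a topological argument that \emph{at most one} site can support the prescribed arm configuration landing on the $I_i$'s: for the $5$-arm case, at most one $v \in S_{N/2}$ can have $5$ arms to $I_1,\ldots,I_5$, so the events $\{A_v\}_{v\in S_{N/2}}$ are pairwise disjoint and $\sum_v \PP(A_v) = \PP(\cup_v A_v) \leq 1$ gives $N^2\pi_5(N) \leq C$ immediately. Your alternative---``Reimer plus separation to decorrelate distant $x$'s''---does not obviously close: Reimer gives $\pi_5 \leq \pi_1 \pi_4$ or similar, and without already knowing the other exponents this cannot bound $N^2\pi_5(N)$. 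Your item (\ref{it3arms}) has the same problem in sharper form: you correctly notice that counting along the boundary gives only $N\pi_3^+ \leq C$, but the fix is \emph{not} a weighted sum over pairs of boundary sites. The paper's remedy is simpler: require the arms to stay strictly in $\mathbb{H}$ (which changes the probability by at most a constant factor), then count over sites in the \emph{two-dimensional} region $S_{N/2}\cap\mathbb{H}$ and use the same at-most-one-site argument to get $N^2\pi_3^+(N) \leq C$. Your item (\ref{it2arms}) via BK-bounded number of top-bottom crossings is a valid alternative to the paper's uniqueness argument; that part is fine.

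Your lower-bound plan is the larger problem. Chaining RSW estimates scale by scale gives $\delta^{\log_2 N} = N^{\log_2\delta}$ for some RSW-dependent constant $\delta$; tuning ``only a bounded number of building blocks per scale'' cannot make this exponent equal $1$ or $2$, because $\delta$ is not under your control. The correct mechanism mirrors the upper bound: use \emph{one} fixed-cost RSW construction to show that with probability at least $C>0$ there \emph{exists} some $5$-arm site (resp.\ $2$- or $3$-arm boundary site) in $S_{N/2}$ --- in the $5$-arm case, take the lowest black crossing of a horizontal strip, observe that every vertex on it already has three of the five arms, and produce the remaining two by further RSW connections. Combined with the union bound $\PP(\exists\,\text{such site}) \leq |S_{N/2}|\cdot\pi_5(N) \asymp N^2\pi_5(N)$, this forces $\pi_5(N)\geq C N^{-2}$. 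The sharp exponent thus comes entirely from the size of the counting region, not from a product over scales.
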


\begin{proof}
We give a complete proof only for item \ref{it5arms}., since we will not need the two first ones -- we will however sketch at the end how to derive them.

\begin{figure}
\begin{center}
\includegraphics[width=8cm]{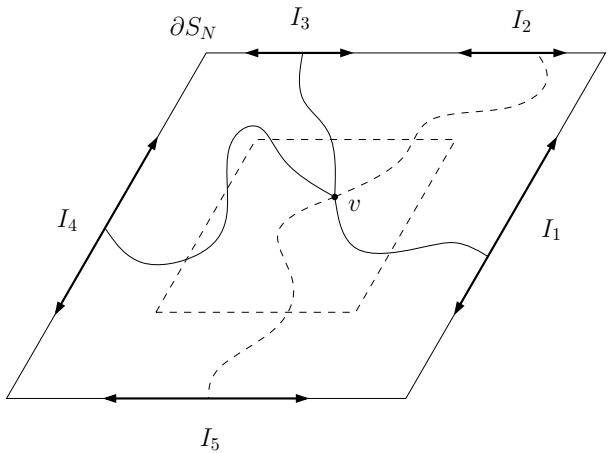}
\caption{\label{5arms}The landing sequence $I_1,\ldots,I_5$.}
\end{center}
\end{figure}

Heuristically, we can prove that the $5$-arm sites can be seen as particular points on the boundary of two big black clusters, and that consequently their number is of order $1$ in $S_{N/2}$. Then it suffices to use that the different sites in $S_{N/2}$ produce contributions of the same order. This argument can be made rigorous by proving that the number of ``macroscopic'' clusters has an exponential tail: we refer to the first exercise sheet in \cite{W2} for more details. We propose here a more direct -- but less elementary -- proof using the separation lemmas.

By color switching, it is sufficient to prove the claim for $\sigma=BWBBW$. In light of our previous results, it is clear that
$$\PP_{1/2}\big(v \leadsto^{5,\sigma} \partial S_N) \asymp \PP_{1/2}\big(0 \leadsto^{5,\sigma} \partial S_N)$$
uniformly in $N$, $v \in S_{N/2}$. It is thus enough to prove that the number of such $5$-arm sites in $S_{N/2}$ is of order $1$. 

Let us consider the upper bound first. Take the particular landing sequence $I_1,\ldots,I_5$ depicted on the figure, and consider the event
$$A_v := \{v \leadsto_{5,\sigma}^I \partial S_N\} \cap \{ \text{$v$ is black} \}.$$
Note that $\PP_{1/2}(A_v) = \frac{1}{2} \PP(v \leadsto_{5,\sigma}^I \partial S_N)$ since the existence of the arms is independent of the status of $v$, so that $\PP_{1/2}(A_v) \asymp \PP_{1/2}\big(0 \leadsto^{5,\sigma} \partial S_N)$. We claim that $A_v$ can occur for at most one site $v$. Indeed, assume that $A_v$ and $A_w$ occur, and denote by $r_1,\ldots,r_5$ and $r'_1,\ldots,r'_5$ the corresponding arms. Since $r_1 \cup r_4 \cup \{v\}$ separates $I_3$ from $I_5$, necessarily $w \in r_1 \cup r_4 \cup \{v\}$. Similarly, $w \in r_2 \cup r_4 \cup \{v\}$: since $r_1 \cap r_2 = \varnothing$, we get that $w \in r_4 \cup \{v\}$. But only one arm can ``go through'' $r_3 \cup r_5$: the arm $r'_1 \cup \{w\}$ from $w$ to $I_1$ has to contain $v$, and so does $r'_2 \cup \{w\}$. Since $r'_1 \cap r'_2 = \varnothing$, we get finally $v = w$.

Consequently,
\begin{equation}
1 \geq \PP_{1/2}\big( \cup_{v \in S_{N/2}} A_v \big) = \sum_{v \in S_{N/2}} \PP_{1/2}(A_v) \asymp N^2 \PP_{1/2}\big(0 \leadsto^{5,\sigma} \partial S_N),
\end{equation}
which provides the upper bound.

Let us turn to the lower bound. We perform a construction showing that a $5$-arm site appears with positive probability, by using multiple applications of RSW. With probability at least $\delta_{16}^2 > 0$, there is a black horizontal crossing in the strip $[-N,N] \times [0,N/8]$, together with a white one in $[-N,N] \times [-N/8,0]$. Assume this is the case, and condition on the lowest black left-right crossing $c$. We note that any site on this crossing has already $3$ arms, $2$ black arms and a white one. On the other hand, the percolation in the region above it remains unbiased.

Now, still using RSW, with positive probability $c$ is connected to the top side by a black path included in $[-N/8,0] \times [-N,N]$, and another white path included in $[0,N/8] \times [-N,N]$. Assume these paths exist, and denote by $v_1$ and $v_2$ the respective sites on $c$ where they arrive. Follow $c$ from left to right, and consider the last vertex $v$ before $v_2$ that is connected to the top side: it is not hard to see that there is a white arm from $v$ to the top side, and that $v \in S_{N/2}$, since $v$ is between $v_1$ and $v_2$. Hence,
\begin{equation}
\PP_{1/2}\big( \cup_{v \in S_{N/2}} \{v \leadsto^{5,\sigma} \partial S_N\} \big) \geq C
\end{equation}
for some universal constant $C > 0$. Since we also have
\begin{align*}
\PP_{1/2}\big( \cup_{v \in S_{N/2}} \{v \leadsto^{5,\sigma} \partial S_N\} \big) & \leq \sum_{v \in S_{N/2}} \PP_{1/2}\big(v \leadsto^{5,\sigma} \partial S_N) \\
& \leq C' N^2 \PP_{1/2}\big(0 \leadsto^5 \partial S_N),
\end{align*}
the desired lower bound follows.

\medskip

We now explain briefly how to obtain the two half-plane exponents (items \ref{it2arms}. and \ref{it3arms}.). We again use the arm separation theorem, but note that \cite{W2} contains elementary proofs for them too. For the $2$-arm exponent in the half-plane, we take $\sigma=BW$ and remark that if we fix two landing areas $I_1$ and $I_2$ on $\partial S'_N$, at most one site on the segment $[-N/2,N/2] \times \{0\}$ is connected by two arms to $I_1$ and $I_2$. On the other hand, a $2$-arm site can be constructed by considering a black path from $[-N/2,0] \times \{0\}$ to $I_1$ and a white path from $[0,N/2] \times \{0\}$ to $I_2$. Then the right-most site on $[-N/2,N/2] \times \{0\}$ connected by a black arm to $I_1$ is a $2$-arm site. Several applications of RSW allow to conclude.

For the $3$-arm exponent, we take three landing areas $I_1$, $I_2$ and $I_3$, and $\sigma=BWB$. It is not hard to construct a $3$-arm site by taking a black crossing from $I_1$ to $I_3$ and considering the closest to $I_2$. We can then force it to be in $S_{N/2} \cap \mathbb{H}$ by a RSW construction. For the upper bound, we first notice that if we require the arms to stay strictly positive, the probability remains of the same order of magnitude. We then use that at most one site in $S_{N/2} \cap \mathbb{H}$ is connected to the landing areas by three positive arms.
\end{proof}

The proofs given here only require RSW-type considerations (including separation of arms). As a consequence, they also apply to near-critical percolation. It is clear for $\PP_p$, on scales $N \leq L(p)$, but a priori only for the color sequences we have used in the proofs (resp. $\sigma =$ $BW$, $BWB$ and $BWBBW$ -- and of course those we can deduce from them by the symmetry $p \leftrightarrow 1-p$): it is indeed not obvious that $\PP_p\big(0 \leadsto^{\sigma} \partial S_N\big) \asymp \PP_p\big(0 \leadsto^{\sigma'} \partial S_N\big)$ for two distinct non-constant $\sigma$ and $\sigma'$. This is essentially Theorem \ref{armnear}, its proof occupies a large part of the next section.

For a general measure $\PPP$ between $\PP_p$ and $\PP_{1-p}$, we similarly have to be careful: we do not know whether $\PPP(v \leadsto^5 \partial S_N)$ remains of the same order of magnitude when $v$ varies. This also comes from Theorem \ref{armnear}, but in the course of its proof we will need an a-priori estimate on the probability of $5$ arms, so temporarily we will be content with a weaker statement that does not use its conclusion:
\begin{lemma} \label{sum5arms}
For $\sigma=BWBBW$ ($= \sigma_5$), we have uniformly in $p$, $\PPP$ between $\PP_p$ and $\PP_{1-p}$ and $N \leq L(p)$:
\begin{equation}
\sum_{v \in S_{N/2}} \PPP\big(v \leadsto^{5,\sigma} \partial S_N\big) \asymp 1.
\end{equation}
\end{lemma}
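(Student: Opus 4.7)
The plan is to adapt the proof of item \ref{it5arms}.\ of Theorem \ref{armuniv} (the $\alpha_5 = 2$ case at criticality) to the near-critical, non-homogeneous setting. That proof has two independent ingredients: a topological bound that at most one site in $S_{N/2}$ can be a ``$5$-arm site with a prescribed landing sequence'', and an RSW construction producing such a site with uniformly positive probability. Both survive the generalization, since the topology is insensitive to the measure and the RSW estimates (Theorem \ref{thm_RSW} and the uniform version derived in Section~3.2) hold uniformly for $\PPP$ between $\PP_p$ and $\PP_{1-p}$ on scales $N \leq L(p)$.

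\textbf{Upper bound.} I fix once and for all an $\eta_0$-separated landing sequence $I = (I_1, \ldots, I_5)$ on $\partial S_N$ arranged as in Figure~\ref{5arms}, and set, for each $v \in S_{N/2}$,
$$A_v := \{v \text{ is black}\} \cap \bigl\{\text{$5$ arms of colors $\sigma$ from $v$ to $I$}\bigr\}.$$
Since $n_0(5) = 0$ and arm extremities are relaxed, the arm event is independent of the colour of $v$, so $\PPP(A_v) = \hat{p}_v \cdot \PPP(\text{$5$ arms from $v$ to $I$})$. The topological argument from the critical proof -- which uses only discrete planarity and the pattern $\sigma = BWBBW$, the two white arms being forced onto opposite sides of the black path $r_1 \cup \{v\} \cup r_4$ and similarly for a second black separator -- shows that $A_v \cap A_w = \varnothing$ whenever $v \neq w$, whence $\sum_{v \in S_{N/2}} \PPP(A_v) \leq 1$. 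Two routine post-processings conclude: (i) the separation lemma in its non-concentric-annulus form (Eq.~(\ref{non_concentric}), applied after interpolating through a $v$-centered box $S_{N/4}(v) \subseteq S_N$ via quasi-multiplicativity) converts the landing-constrained arm event into $\PPP(v \leadsto^{5,\sigma}\partial S_N)$ up to a universal constant; (ii) a uniform lower bound on $\hat{p}_v$ -- for $p \in [1/3, 2/3]$ one has $\hat{p}_v \geq 1/3$ automatically, and for $p$ outside this range $L(p) \leq L(1/3)$ is a universal constant so $|S_{N/2}|$ is bounded and the full sum is trivially $O(1)$.

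\textbf{Lower bound.} I reproduce the RSW construction from the critical proof verbatim: with probability at least $\delta_{16}^2$ there exist a black horizontal crossing of $[-N,N]\times[0,N/8]$ and a white one of $[-N,N]\times[-N/8,0]$; conditioning on the lowest black left-right crossing $c$ leaves the region above it unbiased, so RSW produces with further positive probability a black path from $c$ to the top side inside $[-N/8,0]\times[-N,N]$ and a white one inside $[0,N/8]\times[-N,N]$. The resulting $5$-arm endpoint automatically lies in $[-N/8,N/8] \subset S_{N/2}$. All estimates used are RSW-type, hence uniform under our assumptions, giving $\PPP\bigl(\cup_{v \in S_{N/2}} \{v \leadsto^{5,\sigma}\partial S_N\}\bigr) \geq c > 0$, and the union bound $\sum_v \PPP(\cdot) \geq \PPP(\cup)$ finishes.

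\textbf{Main obstacle.} The technically most delicate step is the non-concentric application of the separation lemma -- arms emanating from an arbitrary $v \in S_{N/2}$ into the $0$-centered box $S_N$ -- which is exactly what the remark around Eq.~(\ref{non_concentric}) is designed to handle, though verifying the ``extendability + landing'' bookkeeping for all $v$ simultaneously requires some care. Note that we do not, and indeed cannot, claim $\PPP(v \leadsto^{5,\sigma}\partial S_N) \asymp \PPP(0 \leadsto^{5,\sigma}\partial S_N)$ uniformly in $v \in S_{N/2}$: absent translation invariance this stronger statement is precisely part of Theorem~\ref{armnear} itself, which is why the present lemma is phrased as a sum -- this is the a-priori input which lets that theorem be proved in turn.
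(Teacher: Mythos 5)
Your proof is correct and takes essentially the same route the paper implicitly follows: the paper does not spell out a proof of this lemma, only asserting that the RSW-based argument for the critical $5$-arm bound carries over, and you have correctly filled in the details (the topological disjointness of the $A_v$, the uniform RSW construction for the lower bound, the non-concentric separation step to pass between landing-constrained and unconstrained arm events, and the two-case handling of the $\hat{p}_v$ factor via $p\in[1/3,2/3]$ versus $L(p)=O(1)$). Your closing remark that the lemma must be stated as a sum, precisely because no pointwise comparison $\PPP(v \leadsto^{5,\sigma}\partial S_N) \asymp \PPP(0 \leadsto^{5,\sigma}\partial S_N)$ is available before Theorem~\ref{armnear}, exactly matches the paper's stated motivation for formulating the lemma in this weaker form.
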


\begin{remark}
We would like to mention that these estimates for critical and near-critical percolation remain valid on other lattices too, like the square lattice (see the discussion in the last section) -- at least for the color sequences that we have used in the proofs, no analog of the color exchange trick being available (to our knowledge).
\end{remark}

\section{Consequences for near-critical percolation\label{nearcritical}}

\subsection{Arm exponents for near-critical percolation}

We would like now to study how the events $A_{j,\sigma}(n,N)$ are affected by a variation of the parameter $p$. We have defined $L(p)$ in terms of crossing events to be the scale on which percolation can be considered as (approximately) critical, we would thus expect the probabilities of these events not to vary too much if $n,N$ remain below $L(p)$. This is what happens:

\begin{theorem} \label{armnear}
Let $j \geq 1$, $\sigma \in \Sj$ be as usual. Then we have
\begin{equation}
\PPP\big(A_{j,\sigma}(n,N)\big) \asymp \PPP'\big(A_{j,\sigma}(n,N)\big)
\end{equation}
uniformly in $p$, $\PPP$ and $\PPP'$ between $\PP_p$ and $\PP_{1-p}$, and $n_0(j) \leq n \leq N \leq L(p)$.
\end{theorem}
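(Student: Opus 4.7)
The plan is to interpolate between the two measures by a one-parameter family of product measures, apply the generalization of Russo's formula from the proof of Theorem~\ref{russo}, and bound the resulting sum of pivotal-site probabilities using a four-arm estimate together with the quasi-multiplicativity established in Section~\ref{sec_separation}. It suffices to compare any such measure with $\PP_{1/2}$, so set $\PPP^{(t)}$ to be the product measure with site parameters $\hat p_v^{(t)} := 1/2 + t(\hat p_v - 1/2)$ for $t\in[0,1]$. Each $\PPP^{(t)}$ remains between $\PP_p$ and $\PP_{1-p}$, so the separation, extendability and quasi-multiplicativity results of Section~\ref{sec_separation} apply uniformly for $\PPP^{(t)}$ at scales $\leq L(p)$. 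Differentiating in $t$ and using $|\hat p_v - 1/2|\leq 1/2-p$ gives
\begin{equation*}
\bigl|\log \PPP(A_{j,\sigma}(n,N)) - \log \PP_{1/2}(A_{j,\sigma}(n,N))\bigr| \leq (1/2-p)\int_0^1 \frac{\sum_v \PPP^{(t)}(v\text{ pivotal for } A_{j,\sigma}(n,N))}{\PPP^{(t)}(A_{j,\sigma}(n,N))}\,dt.
\end{equation*}

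The key step is bounding the integrand. If $v$ is pivotal for $A_{j,\sigma}(n,N)$, then the $\sigma$-arm through $v$ must split into two monochromatic half-arms which, for pivotality to hold, must be separated from the two neighbouring arms by arms of the opposite color: locally around $v$, this forces a four-arm event of alternating colors up to scale comparable to $\mathrm{dist}(v,\partial S_n \cup \partial S_N)$, disjointly from a $j$-arm configuration connecting a neighbourhood of $v$ to $\partial S_n$ and to $\partial S_N$. For $v\in S_{m,2m}$ with $n\leq m\leq N/2$, Reimer's inequality combined with quasi-multiplicativity (the factor $\pi_j^{(t)}(m|2m)\asymp 1$ is absorbed by RSW) gives
\begin{equation*}
\PPP^{(t)}(v\text{ pivotal}) \leq C\,\pi_4^{(t)}(m)\,\PPP^{(t)}(A_{j,\sigma}(n,N)).
\end{equation*}
Summing the $\asymp m^2$ sites per dyadic annulus $S_{m,2m}$ and using that the sequence $m\mapsto m^2\pi_4^{(t)}(m)$ grows quasi-geometrically along dyadic scales (the geometric sum is therefore dominated by its largest term),
\begin{equation*}
\frac{1}{\PPP^{(t)}(A)}\sum_v \PPP^{(t)}(v\text{ pivotal}) \leq C\,N^2\pi_4^{(t)}(N).
\end{equation*}

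It remains to show $(1/2-p)\cdot N^2\pi_4^{(t)}(N) = O(1)$ uniformly in $t$ and in $N\leq L(p)$. Applying the very same decomposition to the crossing event of an $L(p)\times L(p)$ rhombus yields $\sum_v \PPP^{(t)}(v\text{ pivotal for }\mathcal{C}_H) \asymp L(p)^2\pi_4^{(t)}(L(p))$, since $\PPP^{(t)}(\mathcal{C}_H(L(p)\times L(p)))\asymp 1$ by RSW. Integrating the standard Russo formula over $[p,1/2]$ and invoking the defining inequality $|\PP_{1/2}(\mathcal{C}_H(L(p)\times L(p)))-\PP_p(\mathcal{C}_H(L(p)\times L(p)))|\geq 1/2-\epsilon_0$ then forces $(1/2-p)\cdot L(p)^2\pi_4(L(p)) = O(1)$; since $m^2\pi_4^{(t)}(m)$ is comparable to a nondecreasing quantity and $N\leq L(p)$, the desired uniform bound follows, completing the argument.

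The main obstacle is a mild circularity: the pivotal estimate is phrased in terms of $\pi_4^{(t)}$ under the non-homogeneous measure $\PPP^{(t)}$, whose comparison with $\pi_4^{1/2}$ is exactly the $j=4$ case of the very theorem being proved. The resolution is to bootstrap: one first establishes the comparison for the crossing pivotal sum (equivalently, for the four-arm event) directly from universal RSW-type considerations -- the separation lemmas together with the explicit five-arm bound in Lemma~\ref{sum5arms} and Theorem~\ref{armuniv} provide enough input to pin down $L(p)^2\pi_4^{(t)}(L(p))$ up to constants -- and only then feeds this into the decomposition for general $j$ and $\sigma$.
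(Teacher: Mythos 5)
Your overall scaffolding---interpolate between the two measures, apply the generalized Russo formula, bound pivotal sums via a local four-arm event, and geometrically sum over dyadic annuli using the five-arm a-priori control---matches the strategy of the paper's proof. But there are two real gaps, and the second is not a ``mild circularity'' that can be waved away with a bootstrap.

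First, your intermediate bound $\PPP^{(t)}(v\text{ pivotal}) \leq C\,\pi_4^{(t)}(m)\,\PPP^{(t)}(A_{j,\sigma}(n,N))$ is wrong as written when $\sigma$ is not alternating. If two consecutive arms have the same color, pivotality at $v$ does \emph{not} force four alternating arms from $v$ to distance $\asymp m$: the two arms of the opposite color coming from the separating path $c_{i'}$ can terminate early when they hit a neighbouring same-color arm. The paper's Case~3 handles this by replacing the four-arm event by the event $E(v)$, which involves four alternating arms out to some intermediate scale $2^{l'}$ together with a \emph{six}-arm event \emph{with defects} from $2^{l'+1}$ to $2^l$, and then uses Proposition~\ref{defects} (the logarithmic correction for arms with defects) plus Reimer and the one-arm a-priori bound to absorb the extra contributions into $\ppp_t(v\leadsto^{4,\sigma_4}\partial S_{2^l}(v))$. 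Nothing in your sketch touches this.

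Second and more fundamentally, the quantity $\pi_4^{(t)}(m)$ in your inequality must really be $\PPP^{(t)}\big(v\leadsto^{4,\sigma_4}\partial S_m(v)\big)$, which depends on $v$ because $\PPP^{(t)}$ is non-homogeneous. Replacing this by a single $v$-independent quantity $\pi_4^{(t)}(m)$, and then asserting $\sum_v\PPP^{(t)}(v\text{ pivotal for }\mathcal{C}_H)\asymp L^2\pi_4^{(t)}(L)$, \emph{is exactly the $j=4$ case of the theorem you are proving}, applied to translates of $\PPP^{(t)}$. Your proposed resolution (``the separation lemmas together with Lemma~\ref{sum5arms} and Theorem~\ref{armuniv} provide enough input to pin down $L(p)^2\pi_4^{(t)}(L(p))$'') is not an argument---it names the ingredients but not the mechanism, and that mechanism is precisely what is missing. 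The paper's actual resolution is the periodic-duplication trick: for each dyadic box $R'^i_{2^j}$ it constructs a new measure $\bar\PP$ by tiling $S_{2^{K-3}}$ with translated copies of the parameters on that box, observes via Russo's formula that $\int_0^1\sum_{v\in S_{2^{K-3}}}(\bar p_v-p)\,\bar\PP_t(v\leadsto^{4,\sigma_4}\partial S_{2^K})\,dt\leq 1$ (a crossing probability difference), then uses quasi-multiplicativity to factor out the sum $S_j^{i,(4)}$ over the original box times $\sum_{R''}\bar\PP_t(\partial R''\leadsto^{4,\sigma_4}\partial S_{2^K})$, converts the latter to a five-arm sum via Reimer and the one-arm a-priori bound (gaining the geometric factor $2^{\alpha'(K-j)}$), and finally pins that five-arm sum to order $1$ using Lemma~\ref{sum5arms}. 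This is genuinely different from---and more subtle than---the direct scale-by-scale comparison you describe, and your sketch would not compile into a proof without reproducing this construction.

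Your claim that integrating Russo over $[p,1/2]$ and using $|\PP_{1/2}(\mathcal C_H)-\PP_p(\mathcal C_H)|\geq 1/2-\epsilon_0$ ``forces'' the upper bound is also backwards: that inequality gives a \emph{lower} bound on the pivotal integral; what you actually need is the trivial \emph{upper} bound $\leq 1$ on the probability difference, which is what the paper uses in Eq.~(\ref{csq_russo}).
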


Note that if we take in particular $\PPP'=\PP_{1/2}$, we get that below the scale $L(p)$, the arm events remain roughly the same as at criticality:
$$\PPP\big(A_{j,\sigma}(n,N)\big) \asymp \PP_{1/2}\big(A_{j,\sigma}(n,N)\big).$$
This will be important to derive the critical exponents for the characteristic functions from the arm exponents at criticality.

\begin{remark}
Note that the property of exponential decay with respect to $L(p)$ (Lemma \ref{explem}), proved in Section \ref{expsection}, shows that we cannot hope for a similar result on a much larger range, so that $L(p)$ is the appropriate scale here: consider for instance $\PP_p$ with $p>1/2$, the probability to observe a white arm tends to $0$ exponentially fast (and thus much faster than at the critical point), while the probability to observe a certain number of disjoint black arms tends to a positive constant.
\end{remark}

\subsection{Proof of the theorem}

We want to compare the value of $\PPP(A_{j,\sigma}(n,N))$ for different measures $\PPP$. A natural way of doing this is to go from one to the other by using Russo's formula (Theorem \ref{russo}). But since for $j \geq 2$ and non-constant $\sigma$, the event $A_{j,\sigma}(n,N)$ is not monotone, we need a slight generalization of this formula, for events that can be expressed as the intersection of two monotone events, one increasing and one decreasing. We also allow the parameters $p_v$ to be differentiable functions of $t \in [0,1]$.

\begin{lemma}
Let $A^+$ and $A^-$ be two monotone events, respectively increasing and decreasing, depending only on the sites contained in some finite set of vertices $S$. Let $(\hat{p}_v)_{v \in S}$ be a family of differentiable functions $\hat{p}_v : t \in [0,1] \mapsto \hat{p}_v(t) \in [0,1]$, and denote by $(\PPP_t)_{0 \leq t \leq 1}$ the associated product measures. Then
\begin{align*}
\frac{d}{dt} \PPP_t(A^+ \cap A^-) & \\
& \hspace{-1.5cm} = \sum_{v \in S} \frac{d}{dt} \hat{p}_v(t) \Big[ \PPP_t(\text{$v$ is pivotal for $A^+$ but not for $A^-$, and $A^-$ occurs}) & \\
& - \PPP_t(\text{$v$ is pivotal for $A^-$ but not for $A^+$, and $A^+$ occurs})\Big].
\end{align*}
\end{lemma}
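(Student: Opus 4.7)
The plan is to mimic the proof of Russo's formula given just above, and to handle the non-monotone intersection $A^+ \cap A^-$ by a case analysis based on the opposing monotonicities of its two factors. Once we have explicit expressions for the partial derivatives $\partial \PPP(A^+\cap A^-)/\partial \hat{p}_w$ at a fixed parameter $\hat{p}$, the chain rule applied along the differentiable curve $t\mapsto \hat{p}(t)$ delivers the statement.

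The first step is to extend the computation of the partial derivatives. Consider
$$\mathcal{P}: \hat{p}=(\hat{p}_v)_{v\in S}\mapsto \PPP(A^+\cap A^-),$$
a polynomial (hence smooth) function of the $\hat{p}_v$'s. Using the standard coupling by i.i.d.\ uniforms $U_v$, an increment of $\epsilon>0$ in $\hat{p}_w$ alone flips the status of $w$ from white to black with probability $\epsilon$ and leaves the rest of the configuration unchanged. Therefore, up to $o(\epsilon)$, the resulting change in $\PPP(A^+\cap A^-)$ equals $\epsilon$ times the net probability that flipping $\omega_w$ from white to black \emph{changes} the status of $A^+\cap A^-$, the flip being counted positively (resp.\ negatively) when it enters (resp.\ exits) the event.

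The second step is to use monotonicity to identify these flipping events. A flip white$\to$black enters $A^+\cap A^-$ exactly when with $\omega_w=$ black both $A^+$ and $A^-$ hold, but with $\omega_w=$ white at least one fails. Since $A^-$ is decreasing, $A^-$ holding with $\omega_w=$ black automatically forces $A^-$ with $\omega_w=$ white, so the failure must come from $A^+$ alone. That says: $w$ is pivotal for $A^+$, and $A^-$ holds irrespective of $\omega_w$ — equivalently $w$ is not pivotal for $A^-$ and $A^-$ occurs. The symmetric reasoning (swapping the roles of $A^+$ and $A^-$, using that $A^+$ is increasing) shows that a flip white$\to$black exits $A^+\cap A^-$ iff $w$ is pivotal for $A^-$, not pivotal for $A^+$, and $A^+$ occurs. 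All three conditions in each case depend only on $\omega_{\neq w}$, so (by the product structure of $\PPP$) the $\omega_{\neq w}$-probability of each of these sets equals the $\PPP$-probability of the corresponding full-configuration event. This yields
$$\frac{\partial}{\partial \hat{p}_w}\PPP(A^+\cap A^-) = \PPP\big(w\text{ piv.\ for }A^+,\ w\text{ not piv.\ for }A^-,\ A^-\text{ occurs}\big) - \PPP\big(w\text{ piv.\ for }A^-,\ w\text{ not piv.\ for }A^+,\ A^+\text{ occurs}\big).$$

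The final step is the chain rule: since each $\hat{p}_v$ is a differentiable function of $t$ and $\mathcal{P}$ is smooth,
$$\frac{d}{dt}\PPP_t(A^+\cap A^-) = \sum_{v\in S}\frac{d\hat{p}_v}{dt}\,\Big(\frac{\partial}{\partial \hat{p}_v}\mathcal{P}\Big)\Big|_{\hat{p}=\hat{p}(t)},$$
which is exactly the claimed identity. The only real subtlety — and the sole place where the assumption of opposite monotonicities of $A^+$ and $A^-$ is essential — is the step ruling out that a single white$\to$black flip could simultaneously ``create'' an instance of $A^+$ and ``destroy'' an instance of $A^-$ (or vice versa) in a way that would contribute extra terms; the monotonicities prevent this by forcing one of the two factors to hold unchanged on both values of $\omega_w$ whenever the other is pivotal.
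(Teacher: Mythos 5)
Your proposal is correct and follows essentially the same route as the paper: compute the partial derivative in a single coordinate $\hat{p}_w$ via the standard coupling by a white$\to$black flip at $w$, identify the ``enter/exit'' events using the opposite monotonicities of $A^+$ and $A^-$, and finish with the chain rule along $t\mapsto\hat p(t)$. The only difference is that you spell out the case analysis behind the key identity, which the paper merely asserts.
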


\begin{proof}
We adapt the proof of standard Russo's formula. We use the same function $\mathcal{P}$ of the parameters $(\hat{p}_v)_{v \in S}$, and we note that for a small variation $\epsilon>0$ in $w$,
\begin{align*}
\PPP^{+\epsilon}(A^+ \cap A^-) - \PPP(A^+ \cap A^-) & \\
& \hspace{-3cm}= \epsilon \times \PPP(\text{$w$ is pivotal for $A^+$ but not for $A^-$, and $A^-$ occurs}) & \\
& \hspace{-2.5cm} - \epsilon \times \PPP(\text{$w$ is pivotal for $A^-$ but not for $A^+$, and $A^+$ occurs}).
\end{align*}
Now, it suffices to compute the derivative of the function $t \mapsto \PPP_t(A^+ \cap A^-)$ by writing it as the composition of $t \mapsto (\hat{p}_v(t))$ and $(\hat{p}_v)_{v \in S} \mapsto \PPP(A)$.
\end{proof}

\begin{proof}[Proof of the theorem]

We now turn to the proof itself. It is divided into three main steps.

\subsubsection*{1. First simplifications}

Note first that by quasi-multiplicativity, we can restrict ourselves to $n = n_0(j)$. It also suffices to prove the result for some fixed $\PPP'$, with $\PPP$ varying: we thus assume that $p<1/2$, and take $\PPP'=\PP_p$. Denoting by $\hat{p}_v$ the parameters of $\PPP$, we have by hypothesis $\hat{p}_v \geq p$ for each site $v$. For technical reasons, we suppose that the sizes of annuli are powers of two: take $k_0$, $K$ such that $2^{k_0-1} < n_0 \leq 2^{k_0}$ and $2^K \leq N < 2^{K+1}$, then
$$\PP_p(A_{j,\sigma}(n_0,N)) \asymp \PP_p(A_{j,\sigma}(2^{k_0},2^K))$$
and the same is true for $\PPP$.

\begin{figure}
\begin{center}
\includegraphics[width=11cm]{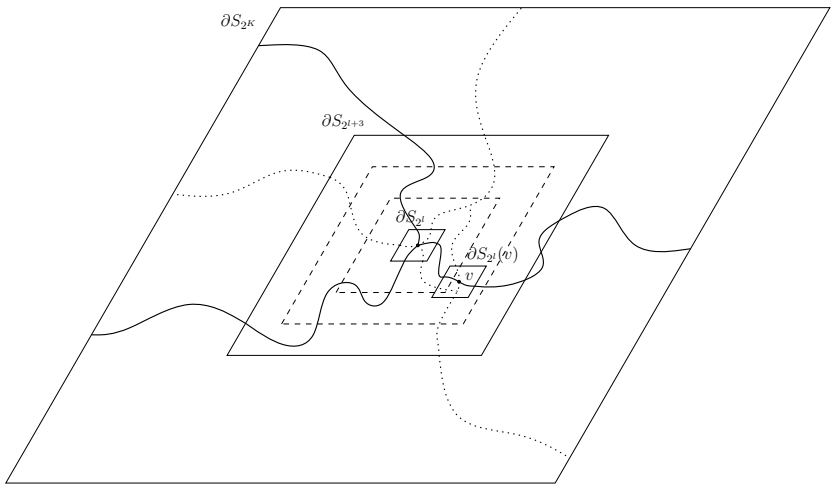}
\caption{If $v$ is pivotal, $4$ alternating arms arise locally.}
\label{local_arms}
\end{center}
\end{figure}

To estimate the change in probability when $p$ is replaced by $\hat{p}_v$, we will use the observation that the pivotal sites give rise to $4$ alternating arms locally (see Figure \ref{local_arms}). However, this does not work so nicely for the sites $v$ which are close to $\partial S_{2^{k_0}}$ or $\partial S_{2^K}$, so for the sake of simplicity we treat apart these sites. We perform the change $p \leadsto \hat{p}_v$ in $S_{2^{k_0},2^K} \setminus S_{2^{k_0+3},2^{K-3}}$. Note that the intermediate measure $\ppp$ so obtained is between $\PP_p$ and $\PP_{1-p}$, and that $\ppp(A_{j,\sigma}(2^{k_0+3},2^{K-3})) = \PP_p(A_{j,\sigma}(2^{k_0+3},2^{K-3}))$. We have
\begin{equation}
\ppp(A_{j,\sigma}(2^{k_0},2^K)) \asymp \ppp(A_{j,\sigma}(2^{k_0+3},2^{K-3}))
\end{equation}
and also
\begin{equation}
\PP_p(A_{j,\sigma}(2^{k_0},2^K)) \asymp \PP_p(A_{j,\sigma}(2^{k_0+3},2^{K-3})),
\end{equation}
which shows that it would be enough to prove the result with $\ppp$ instead of $\PP_p$.

\subsubsection*{2. Make appear the logarithmic derivative of the probability by applying Russo's formula}

The event $A_{j,\sigma}(2^{k_0},2^K)$ cannot be directly written as an intersection like in Russo's formula, since the order of the different arms is prescribed. To fix this difficulty, we impose the landing areas of the different arms on $\partial S_{2^K}$, \emph{ie} we fix some landing sequence $I' = I'_1, \ldots ,I'_j$ and we consider the event $\Ab_{j,\sigma}^{./I'}(2^{k_0},2^K)$. Since we know that
\begin{equation}
\ppp\big(A_{j,\sigma}(2^{k_0},2^K)\big) \asymp \ppp\big(\Ab_{j,\sigma}^{. / I'}(2^{k_0},2^K)\big),
\end{equation}
and also with $\PPP$ instead of $\ppp$, it is enough to prove the result for this particular landing sequence.

We study successively three cases. We begin with the case of one arm, which is slightly more direct than the two next ones -- however, only small adaptations are needed. We then consider the special case where $j$ is even and $\sigma$ alternating: due to the fact that any arm is surrounded by two arms of opposite color, the local four arms are always of the right length. We finally prove the result for any $j$ and any $\sigma$: a technical complication arises in this case, for which the notion of ``arms with defects'' is needed.

\medskip

\noindent \textbf{Case 1: $j=1$}

We consider first the case of one arm, and assume for instance $\sigma = B$. We introduce the family of measures $(\ppp_t)_{t \in [0,1]}$ with parameters
$$\tilde{p}_v(t) = t \hat{p}_v + (1-t) p$$
in $S_{2^{k_0+3},2^{K-3}}$, corresponding to a linear interpolation between $p$ and $\hat{p}_v$. For future use, note that $\ppp_t$ is between $\PP_p$ and $\PP_{1-p}$ for any $t \in [0,1]$. We have $\frac{d}{dt} \tilde{p}_v(t) = \hat{p}_v - p$ if $v \in S_{2^{k_0+3},2^{K-3}}$ (and $0$ otherwise\ldots), generalized Russo's formula (with just an increasing event -- take for instance $A^- = \Omega$) thus gives:
\begin{align*}
\frac{d}{dt} \ppp_t\big( \Ab_{1,\sigma}^{. / I'}(2^{k_0},2^K) \big) = \sum_{v \in S_{2^{k_0+3},2^{K-3}}} (\hat{p}_v - p) \ppp_t(\text{$v$ is pivotal for $\Ab_{1,\sigma}^{. / I'}(2^{k_0},2^K)$}).
\end{align*}

The key remark is that the summand can be expressed in terms of arm events: for probabilities, being pivotal is approximately the same as having a black arm, and four arms locally around $v$. Indeed, $\big[ v$ is pivotal for $\Ab_{1,\sigma}^{. / I'}(2^{k_0},2^K)$ $\big]$ \emph{iff}
\begin{enumerate}[(1)]
\item There exists an arm $r_1$ from $\partial S_{2^{k_0}}$ to $I'_1$, with $v \in r_1$; $r_1$ is black, with a possible exception in $v$ (\emph{$\Ab_{1,\sigma}^{. / I'}(2^{k_0},2^K)$ occurs when $v$ is black}).
\item There exists a path $c_1$ passing through $v$ and separating $\partial S_{2^{k_0}}$ from $I'_1$ ($c_1$ may be either a circuit around $\partial S_{2^{k_0}}$ or a path with extremities on $\partial S_{2^K}$); $c_1$ is white, except possibly in $v$ (\emph{there is no black arm from $\partial S_{2^{k_0}}$ to $I'_1$ when $v$ is white}).
\end{enumerate}
Put now a rhombus $R(v)$ around $v$: if it does not contain $0$, then $v$ is connected to $\partial R(v)$ by $4$ arms of alternating colors. Indeed, $r_1$ provides two black arms, and $c_1$ two white arms.

Look at the pieces of the black arm outside of $R(v)$: if $R(v)$ is not too large, we can expect them to be sufficiently large to enable us to reconstitute the whole arm. We would like that the two white arms are a good approximation of the whole circuit too. We thus take $R(v)$ of size comparable to the distance $d(0,v)$: if $2^{l+1} < \|v\|_{\infty} \leq 2^{l+2}$, we take $R(v)=S_{2^l}(v)$. It is not hard to check that $R(v) \subseteq S_{2^l,2^{l+3}}$ for this particular choice of $R(v)$ (see Figure \ref{local_arms}), so that for any $t \in [0,1]$,
\begin{align*}
\ppp_t(\text{$v$ is pivotal for $\Ab_{1,\sigma}^{. / I'}(2^{k_0},2^K)$}) & \\
& \hspace{-3cm} \leq \ppp_t\big( \{ \partial S_{2^{k_0}} \leadsto \partial S_{2^l} \} \cap \{ \partial S_{2^{l+3}} \leadsto \partial S_{2^K} \} \cap \{ v \leadsto^{4,\sigma_4} \partial S_{2^l}(v) \}\big) \\
& \hspace{-3cm} = \ppp_t\big( \partial S_{2^{k_0}} \leadsto \partial S_{2^l} \big) \ppp_t \big( \partial S_{2^{l+3}} \leadsto \partial S_{2^K} \big) \ppp_t \big( v \leadsto^{4,\sigma_4} \partial S_{2^l}(v) \big)
\end{align*}
by independence of the three events, since they are defined in terms of sites in disjoint sets. We can then make appear the original event by joining together the two first terms, using quasi-multiplicativity and extendability\footnote{Note that in the case of one arm, the extendability property, as well as the quasi-multiplicativity, are direct consequences of RSW and do not require the separation lemmas.}:
\begin{equation}
\ppp_t\big( \partial S_{2^{k_0}} \leadsto \partial S_{2^l} \big) \ppp_t \big( \partial S_{2^{l+3}} \leadsto \partial S_{2^K} \big) \leq C_2 \ppp_t \big( \Ab_{1,\sigma}^{. / I'}(2^{k_0},2^K) \big)
\end{equation}
for some $C_2$ universal. Hence\footnote{As we will see in the next sub-section (Proposition \ref{converse}), the converse bound also holds: the estimate obtained gives the exact order of magnitude for the summand.},
\begin{equation}
\ppp_t(\text{$v$ is pivotal for $\Ab_{1,\sigma}^{. / I'}(2^{k_0},2^K)$}) \leq C_2 \ppp_t\big( \Ab_{1,\sigma}^{. / I'}(2^{k_0},2^K) \big) \ppp_t \big( v \leadsto^{4,\sigma_4} \partial S_{2^l}(v) \big).
\end{equation}

We thus get
\begin{align*}
\frac{d}{dt} \ppp_t\big( \Ab_{1,\sigma}^{. / I'}(2^{k_0},2^K) \big) & \\
& \hspace{-2cm} \leq C_2 \sum_{v \in S_{2^{k_0+3},2^{K-3}}} (\hat{p}_v - p) \ppp_t\big( \Ab_{1,\sigma}^{. / I'}(2^{k_0},2^K) \big) \ppp_t \big( v \leadsto^{4,\sigma_4} \partial S_{2^l}(v) \big).
\end{align*}
Now dividing by $\ppp_t\big( \Ab_{1,\sigma}^{. / I'}(2^{k_0},2^K) \big)$, we make appear its logarithmic derivative in the left-hand side,
\begin{equation}
\frac{d}{dt} \log\big[\ppp_t\big( \Ab_{1,\sigma}^{. / I'}(2^{k_0},2^K) \big)\big] \leq C_2 \sum_{v \in S_{2^{k_0+3},2^{K-3}}} (\hat{p}_v - p) \ppp_t \big( v \leadsto^{4,\sigma_4} \partial S_{2^l}(v) \big),
\end{equation}
it thus suffices to show that for some $C_3$ universal,
\begin{equation}
\label{univ}
\int_0^1 \sum_{v \in S_{2^{k_0+3},2^{K-3}}} (\hat{p}_v - p) \: \ppp_t \big( v \leadsto^{4,\sigma_4} \partial S_{2^l}(v) \big) dt \leq C_3.
\end{equation}
We will prove it in the next step, but before that, we turn to the two other cases: even if the computations need to be modified, it is still possible to reduce the proof to this inequality.

\medskip

\noindent \textbf{Case 2: $j$ even and $\sigma$ alternating}

In this case,
\begin{equation}
\Ab_{j,\sigma}^{. / I'}(2^{k_0},2^K) = A^+ \cap A^-
\end{equation}
with $A^+ = A^+(2^{k_0},2^K) = \{$There exist $j/2$ disjoint black arms $r_1 : \partial S_{2^{k_0}} \leadsto I'_1, \: r_3 : \partial S_{2^{k_0}} \leadsto I'_3 \ldots\}$ and $A^- = A^-(2^{k_0},2^K) = \{$There exist $j/2$ disjoint white arms $r_2 : \partial S_{2^{k_0}} \leadsto^{\ast} I'_2, \: r_4 : \partial S_{2^{k_0}} \leadsto^{\ast} I'_4 \ldots\}$.

We then perform the change $p \leadsto \hat{p}_v$ in $S_{2^{k_0+3},2^{K-3}}$ linearly as before, which gives rise to the family of measures $(\ppp_t)_{t \in [0,1]}$, and generalized Russo's formula reads
\begin{align*}
\frac{d}{dt} \ppp_t\big(\Ab_{j,\sigma}^{. / I'}(2^{k_0},2^K)\big) \\
& \hspace{-3cm} = \sum_{v \in S_{2^{k_0+3},2^{K-3}}} (\hat{p}_v - p) \Big[ \ppp_t(\text{$v$ is pivotal for $A^+$ but not for $A^-$, and $A^-$ occurs}) & \\
& - \ppp_t(\text{$v$ is pivotal for $A^-$ but not for $A^+$, and $A^+$ occurs})\Big].
\end{align*}

We note that $\big[ v$ is pivotal for $A^+(2^{k_0},2^K)$ but not $A^-(2^{k_0},2^K)$, and $A^-(2^{k_0},2^K)$ occurs $\big]$ \emph{iff} for some $i' \in \{1,3 \ldots ,j-1\}$,
\begin{enumerate}[(1)]
\item There exist $j$ disjoint monochromatic arms $r_1, \ldots ,r_j$ from $\partial S_{2^{k_0}}$ to $I'_1, \ldots ,I'_j$, with $v \in r_{i'}$; $r_2, r_4, \ldots$ are white, and $r_1, r_3, \ldots$ are black, with a possible exception for $r_{i'}$ in $v$ (\emph{the event $\Ab_{j,\sigma}^{. / I'}(2^{k_0},2^K)$ is satisfied when $v$ is black}).
\item There exists a path $c_{i'}$ separating $\partial S_{2^{k_0}}$ from $I'_{i'}$; this path is white, except possibly in $v$ (\emph{$\partial S_{2^{k_0}}$ and $I'_{i'}$ are separated when $v$ is white}).
\end{enumerate}
If we take the same rhombus $R(v) \subseteq S_{2^l,2^{l+3}}$ around $v$, then $v$ is still connected to $\partial R(v)$ by 4 arms of alternating colors. Indeed, $r_{i'}$ provides two black arms, and $c_{i'}$ (which can contain parts of $r_{i'-1}$ or $r_{i'+1}$ -- see Figure \ref{local_arms}) provides the two white arms.

Hence for any $t \in [0,1]$,
\begin{align*}
\ppp_t(\text{$v$ is pivotal for $A^+$ but not $A^-$, and $A^-$ occurs}) & \\
& \hspace{-6cm} \leq \ppp_t\big( A_{j,\sigma}(2^{k_0},2^l) \cap \Ab_{j,\sigma}^{. / I'}(2^{l+3},2^K) \cap \{ v \leadsto^{4,\sigma_4} \partial S_{2^l}(v) \}\big) \\
& \hspace{-6cm} = \ppp_t\big( A_{j,\sigma}(2^{k_0},2^l) \big) \ppp_t \big( \Ab_{j,\sigma}^{. / I'}(2^{l+3},2^K) \big) \ppp_t \big( v \leadsto^{4,\sigma_4} \partial S_{2^l}(v) \big)
\end{align*}
by independence of the three events. We join together the two first terms using extendability and quasi-multiplicativity:
\begin{equation}
\ppp_t\big( A_{j,\sigma}(2^{k_0},2^l) \big) \ppp_t \big( \Ab_{j,\sigma}^{. / I'}(2^{l+3},2^K) \big) \leq C_1 \ppp_t\big( \Ab_{j,\sigma}^{. / I'}(2^{k_0},2^K) \big)
\end{equation}
for some $C_2$ universal. We thus obtain
\begin{align*}
\ppp_t(\text{$v$ is pivotal for $A^+$ but not $A^-$, and $A^-$ occurs}) & \\
& \hspace{-4cm} \leq C_1 \ppp_t\big(\Ab_{j,\sigma}^{. / I'}(2^{k_0},2^K)\big) \ppp_t \big( v \leadsto^{4,\sigma_4} \partial S_{2^l}(v) \big).
\end{align*}

If we then do the same manipulation on the second term of the sum, we get
\begin{align*}
\bigg| \frac{d}{dt} \ppp_t\big(\Ab_{j,\sigma}^{. / I'}(2^{k_0},2^K)\big) \bigg| & \\
& \hspace{-2cm} \leq 2 C_1 \sum_{v \in S_{2^{k_0+3},2^{K-3}}} (\hat{p}_v - p) \ppp_t\big(\Ab_{j,\sigma}^{. / I'}(2^{k_0},2^K)\big) \ppp_t \big( v \leadsto^{4,\sigma_4} \partial S_{2^l}(v) \big),
\end{align*}
and if we divide by $\ppp_t\big(\Ab_{j,\sigma}^{. / I'}(2^{k_0},2^K)\big)$,
\begin{equation}
\bigg| \frac{d}{dt} \log\big[\ppp_t\big(\Ab_{j,\sigma}^{. / I'}(2^{k_0},2^K)\big)\big] \bigg| \leq 2 C_1 \sum_{v \in S_{2^{k_0+3},2^{K-3}}} (\hat{p}_v - p) \ppp_t \big( v \leadsto^{4,\sigma_4} \partial S_{2^l}(v) \big).
\end{equation}
As promised, we have thus reduced this case to Eq.(\ref{univ}).

\medskip

\noindent \textbf{Case 3: Any $j$, $\sigma$}

In the general case, a minor complication may arise, coming from consecutive arms of the same color: indeed, the property of being pivotal for a site $v$ does not always give rise to four arms in $R(v)$, but to some more complex event $E(v)$ (see Figure \ref{non_alternating}). If $v$ is on $r_i$, and this arm is black for instance, there are still two black arms coming from $r_i$, but the two white arms do not necessarily reach $\partial R(v)$, since they can encounter neighboring black arms.

\begin{figure}
\begin{center}
\includegraphics[width=9cm]{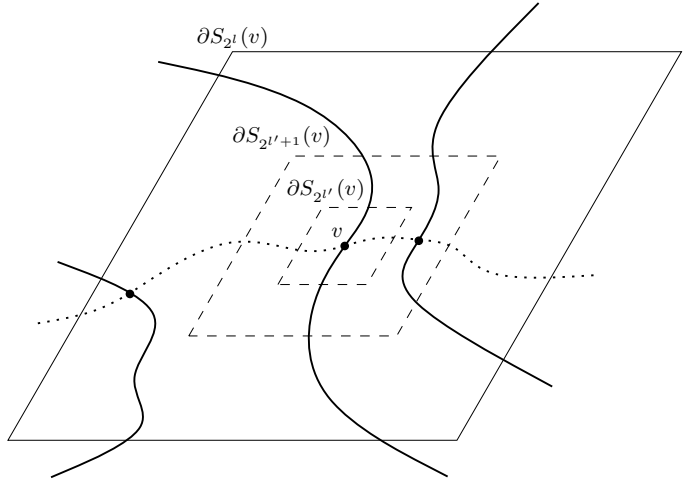}
\caption{More complex events may arise when $\sigma$ is not alternating.}
\label{non_alternating}
\end{center}
\end{figure}

We first introduce an event for which the property of being pivotal is easier to formulate. We group consecutive arms of the same color in ``packs'': if $(r_{i_q},r_{i_q+1},\ldots,r_{i_q+l_q-1})$ is such a sequence of arms, say black, we take an interval $\tilde{I}_q$ covering all the $I_i$ for $i_q \leq i \leq i_q+l_q-1$ and replace the condition ``$r_i \leadsto I_i$ for all $i_q \leq i \leq i_q+l_q-1$'' by ``$r_i \leadsto \tilde{I}_q$ for all $i_q \leq i \leq i_q+l_q-1$''. We construct in this way an event $\tilde{A} = \tilde{A}^+ \cap \tilde{A}^-$: since it is intermediate between $\Ab_{j,\sigma}^{. / I'}(2^{k_0},2^K)$ and $A_{j,\sigma}(2^{k_0},2^K)$, we have
$$\ppp_t(\tilde{A}) \asymp \ppp_t(\Ab_{j,\sigma}^{. / I'}(2^{k_0},2^K)).$$

This new definition allows to use Menger's theorem (see \cite{D_book}, Theorem 3.3.1): $\big[ v$ is pivotal for $\tilde{A}^+$ but not $\tilde{A}^-$, and $\tilde{A}^-$ occurs $\big]$ \emph{iff} for some arm $r_{i'}$ in a black pack $(r_{i_q},r_{i_q+1},\ldots,r_{i_q+l_q-1})$,
\begin{enumerate}[(1)]
\item There exist $j$ disjoint monochromatic arms $r_1, \ldots ,r_j$ from $\partial S_{2^{k_0}}$ to the $\tilde{I}_q$ (an appropriate number of arms for each of these intervals), with $v \in r_{i'}$; all of these arms are of the prescribed color, with a possible exception for $r_{i'}$ in $v$ (\emph{$\tilde{A}$ occurs when $v$ is black}).
\item There exists a path $c_{i'}$ separating $\partial S_{2^{k_0}}$ from $\tilde{I}_{q}$; this path is white, except in (at most) $l_q-1$ sites, and also possibly in $v$ (\emph{$\partial S_{2^{k_0}}$ and $\tilde{I}_{q}$ are separated when $v$ is white}).
\end{enumerate}

Now we take the same rhombus $R(v) \subseteq S_{2^l,2^{l+3}}$ around $v$: if there are four arms $v \leadsto^{4,\sigma_4} \partial S_{2^{l-1}}(v)$, we are OK. Otherwise, if $l'$, $1 \leq l' \leq l-2$, is such that the defect on $c_{i'}$ closest to $v$ is in $S_{2^{l'+1}}(v) \setminus S_{2^{l'}}(v)$, then there are $4$ alternating arms $v \leadsto^{4,\sigma_4} \partial S_{2^{l'}}(v)$, and also $6$ arms $\partial S_{2^{l'+1}}(v) \leadsto^{6,\sigma'_6} \partial S_{2^l}(v)$ having at most $j$ defects, with the notation $\sigma'_6=BBWBBW$. We denote by $E(v)$ the corresponding event: $E(v):=$ $\{$There exists $l' \in \{1,\ldots,l-2\}$ such that $v \leadsto^{4,\sigma_4} \partial S_{2^{l'}}(v)$ and $\partial S_{2^{l'+1}}(v) \leadsto^{6,\sigma'_6,(j)} \partial S_{2^l}(v)$$\}$ $\cup$ $\{$$v \leadsto^{4,\sigma_4} \partial S_{2^{l-1}}(v)$$\}$.

For the $6$ arms with defects, Proposition \ref{defects} applies and the probability remains roughly the same, with just an extra logarithmic correction:
\begin{align*}
\ppp_t \big( \partial S_{2^{l'+1}}(v) \leadsto^{6,\sigma'_6,(j)} \partial S_{2^l}(v) \big) & \\
& \hspace{-4cm} \leq C_1 (l-l')^j \ppp_t \big( \partial S_{2^{l'+1}}(v) \leadsto^{6,\sigma'_6} \partial S_{2^l}(v) \big)\\
& \hspace{-4cm} \leq C_1 (l-l')^j \ppp_t \big( \partial S_{2^{l'+1}}(v) \leadsto^{4,\sigma_4} \partial S_{2^l}(v) \big) \ppp_t \big( \partial S_{2^{l'+1}}(v) \leadsto^{BB} \partial S_{2^l}(v) \big)\\
& \hspace{-4cm} \leq C_2 (l-l')^j \ppp_t \big( \partial S_{2^{l'+1}}(v) \leadsto^{4,\sigma_4} \partial S_{2^l}(v) \big) 2^{-\alpha'(l-l')}
\end{align*}
using Reimer's inequality (its consequence Eq.(\ref{csq_Reimer})) and the a-priori bound for one arm (Eq.(\ref{apriori})).

It implies that
\begin{equation}
\ppp_t \big( E(v) \big) \leq C_5 \ppp_t \big( v \leadsto^{4,\sigma_4} \partial S_{2^l}(v) \big)
\end{equation}
for some universal constant $C_5$: indeed, by quasi-multiplicativity,
\begin{align*}
\sum_{l'=1}^{l-2} \ppp_t \big( v \leadsto^{4,\sigma_4} \partial S_{2^{l'}}(v) \big) \ppp_t \big( \partial S_{2^{l'+1}}(v) \leadsto^{6,\sigma'_6,(j)} \partial S_{2^l}(v) \big) &\\
& \hspace{-8.3cm} \leq C_2 \sum_{l'=1}^{l-2} \ppp_t \big( v \leadsto^{4,\sigma_4} \partial S_{2^{l'}}(v) \big) \ppp_t \big( \partial S_{2^{l'+1}}(v) \leadsto^{4,\sigma_4} \partial S_{2^l}(v) \big) (l-l')^j 2^{-\alpha'(l-l')}\\
& \hspace{-8.3cm} \leq C_3 \ppp_t \big( v \leadsto^{4,\sigma_4} \partial S_{2^l}(v) \big) \sum_{l'=1}^{l-2} (l-l')^j 2^{-\alpha'(l-l')}\\
& \hspace{-8.3cm} \leq C_4 \ppp_t \big( v \leadsto^{4,\sigma_4} \partial S_{2^l}(v) \big),
\end{align*}
since $\sum_{l'=1}^{l-2} (l-l')^j 2^{-\alpha'(l-l')} \leq \sum_{r=1}^{\infty} r^j 2^{-\alpha'r} < \infty$.

The reasoning is then identical:
\begin{align*}
\ppp_t(\text{$v$ is pivotal for $\tilde{A}^+$ but not $\tilde{A}^-$, and $\tilde{A}^-$ occurs}) & \\
& \hspace{-6cm} \leq \ppp_t\big( A_{j,\sigma}(2^{k_0},2^l) \big) \ppp_t \big( A_{j,\sigma}(2^{l+3},2^K) \big) \ppp_t \big( E(v) \big)\\
& \hspace{-6cm} \leq C_6 \ppp_t\big( A_{j,\sigma}(2^{k_0},2^K) \big) \ppp_t \big( v \leadsto^{4,\sigma_4} \partial S_{2^l}(v) \big),
\end{align*}
and using $\ppp_t\big( A_{j,\sigma}(2^{k_0},2^K) \big) \leq C_7 \ppp_t(\tilde{A})$, we get
\begin{equation}
\bigg| \frac{d}{dt} \log\big[\ppp_t\big( \tilde{A} \big)\big] \bigg| \leq C_8 \sum_{v \in S_{2^{k_0+3},2^{K-3}}} (\hat{p}_v - p) \ppp_t \big( v \leadsto^{4,\sigma_4} \partial S_{2^l}(v) \big).
\end{equation}
Once again, Eq.(\ref{univ}) would be sufficient.

\subsubsection*{3. Final summation}

We now prove Eq.(\ref{univ}), \emph{ie} that for some universal constant $C_1$,
\begin{equation}
\label{goal}
\int_0^1 \sum_{v \in S_{2^{k_0+3},2^{K-3}}} (\hat{p}_v - p) \: \ppp_t \big( v \leadsto^{4,\sigma_4} \partial S_{2^l}(v) \big) dt \leq C_1.
\end{equation}
Recall that Russo's formula allows to count $4$-arm sites: for any $N$ and any parameters $(\bar{p}_v)$ between $p$ and $1-p$,
\begin{equation}
\int_0^1 \sum_{v \in S_N} (\bar{p}_v - p) \: \bar{\PP}_t \big( v \leadsto^{4,\sigma_4} \partial S_N \big) dt = \bar{\PP}(\mathcal{C}_H(S_N)) - \PP_p (\mathcal{C}_H(S_N)) \leq 1.
\end{equation}
This is essentially the only relation we have at our disposal, the end of the proof consists in using it in a clever way.

Roughly speaking, when applied to $N=L(p)$, this relation gives that $(p-1/2) N^2 \pi_4(N) \leq 1$, since all the sites give a contribution of order
\begin{equation}
\label{heuristic}
\bar{\PP}_t \big( 0 \leadsto^{4,\sigma_4} \partial S_{N/2} \big) \asymp \pi_4(N).
\end{equation}
This corresponds more or less to the sites in the ``external annulus'' in Eq.(\ref{goal}). Now each time we get from an annulus to the next inside it, the probability to have $4$ arms is multiplied by $2^{\alpha_4} \approx 2^{5/4}$, while the number of sites is divided by $4$, so that things decay exponentially fast, and the sum of Eq.(\ref{goal}) is bounded by something like
$$\sum_{j=3}^{K-k_0-4} (2^{5/4-2})^{K-j} \leq \sum_{q=0}^{\infty} (2^{-3/4})^q < \infty.$$
We have to be more cautious, in particular Eq.(\ref{heuristic}) does not trivially hold, since we do not know at this point that the probability of having $4$ arms remains of the same order on a scale $L(p)$, and the estimate for $4$ arms only gives a logarithmic equivalence. The a-priori estimate coming from the $5$-arm exponent will allow us to circumvent these difficulties. We also need to take care of the boundary effects.

Assume that $v \in S_{2^{l+1},2^{l+2}}$ as before. We subdivide this annulus into $12$ sub-boxes of size $2^{l+1}$ (see figure) $R^i_{2^{l+1}}$ ($i=1,\ldots,12$). We then associate to each of these boxes a slightly enlarged box $R'^i_{2^{l+1}}$, of size $2^{l+2}$. At least one of these boxes contains $v$: we denote it by $R'(v)$. Since
$$\{v \leadsto^{4,\sigma_4} \partial S_{2^{l+1}}(v)\} \subseteq \{v \leadsto^{4,\sigma_4} \partial R'(v)\} \subseteq \{v \leadsto^{4,\sigma_4} \partial S_{2^l}(v)\},$$
we have
$$\ppp_t \big( v \leadsto^{4,\sigma_4} \partial S_{2^l}(v) \big) \asymp \ppp_t \big( v \leadsto^{4,\sigma_4} \partial R'(v) \big).$$
We thus have to find an upper bound for
\begin{equation} \label{big_sum}
\sum_{j=k_0+3}^{K-4} \sum_{i=1}^{12} \int_0^1 \sum_{v \in R'^i_{2^j}} (\hat{p}_v - p) \: \ppp_t \big( v \leadsto^{4,\sigma_4} \partial R'^i_{2^j} \big) dt.
\end{equation}

\begin{figure}
\begin{center}
\includegraphics[width=8cm]{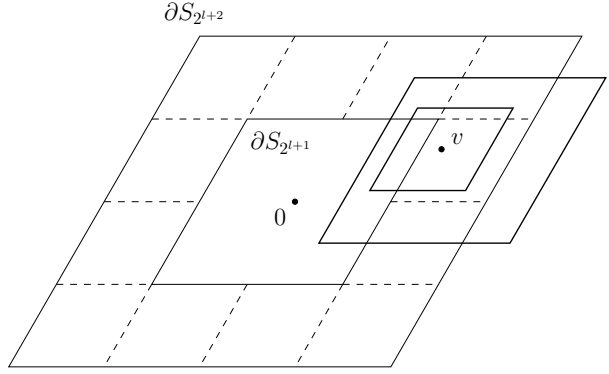}
\caption{\label{summation}We replace $R(v) = S_{2^l}(v)$ by one of the $R'^i_{2^{l+1}}$ ($i=1,\ldots,12$).}
\end{center}
\end{figure}

For that purpose, we will prove that for $i=1,\ldots,12$,
$$S_j^{i,(4)} := \sum_{v \in R'^i_{2^j}} (\hat{p}_v - p) \: \ppp_t \big( v \leadsto^{4,\sigma_4} \partial R'^i_{2^j} \big)$$
indeed decays fast when, starting from $j=K-4$, we make $j$ decrease. For that, we duplicate the parameters in the box $R'^i_{2^j}$ periodically inside $S_{2^{K-3}}$: this gives rise to a new measure $\bar{\PP}$ inside $S_{2^K}$ (to completely define it, simply take $\bar{p}_v = p$ outside of $S_{2^{K-3}}$). This measure contains $2^{2(K-j-3)}$ copies $(R'')$ of the original box, and we know that
\begin{equation} \label{csq_russo}
\int_0^1 \sum_{v \in S_{2^{K-3}}} (\bar{p}_v - p) \: \bar{\PP}_t \big( v \leadsto^{4,\sigma_4} \partial S_{2^K} \big) dt \leq 1.
\end{equation}
We have
\begin{align*}
\sum_{v \in S_{2^{K-3}}} (\bar{p}_v-p) \bar{\PP}_t(v \leadsto^{4,\sigma_4} \partial S_{2^K}) & \\
& \hspace{-3cm} \asymp \sum_{v \in S_{2^{K-3}}} (\bar{p}_v-p) \bar{\PP}_t \big( v \leadsto^{4,\sigma_4} \partial R'(v) \big) \bar{\PP}_t \big( \partial R'(v) \leadsto^{4,\sigma_4} \partial S_{2^K} \big)\\
& \hspace{-3cm} \asymp \bigg( \sum_{R''} \bar{\PP}_t \big( \partial R'' \leadsto^{4,\sigma_4} \partial S_{2^K} \big) \bigg) S_j^{i,(4)}.
\end{align*}
Hence, using Reimer's inequality and the a-priori bound for one arm,
\begin{align*}
\sum_{v \in S_{2^{K-3}}} (\bar{p}_v-p) \bar{\PP}_t(v \leadsto^{4,\sigma_4} \partial S_{2^K}) & \\
& \hspace{-3.5cm} \geq C_1 \bigg( \sum_{R''} \bar{\PP}_t \big( \partial R'' \leadsto^{5,\sigma_5} \partial S_{2^K} \big) \bar{\PP}_t \big( \partial R'' \leadsto \partial S_{2^K} \big)^{-1} \bigg) S_j^{i,(4)}\\
& \hspace{-3.5cm} \geq C_2 2^{\alpha'(K-j)} S_j^{i,(4)} \bigg( \sum_{R''} \bar{\PP}_t \big( \partial R'' \leadsto^{5,\sigma_5} \partial S_{2^K} \big) \bigg).
\end{align*}
The same manipulation for $5$ arms gives, with $\tilde{S}_j^{i,(5)} = \sum_{v \in R'^i_{2^j}} \ppp_t \big( v \leadsto^{5,\sigma_5} \partial R'^i_{2^j} \big)$,
\begin{equation}
\sum_{v \in S_{2^{K-3}}} \bar{\PP}_t(v \leadsto^{5,\sigma_5} \partial S_{2^K}) \asymp \bigg( \sum_{R''} \bar{\PP}_t \big( \partial R'' \leadsto^{5,\sigma_5} \partial S_{2^K} \big) \bigg) \tilde{S}_j^{i,(5)}.
\end{equation}
We know from Lemma \ref{sum5arms} that $\sum_{v \in S_{2^{K-3}}} \bar{\PP}_t(v \leadsto^{5,\sigma_5} \partial S_{2^K})$ and $\tilde{S}_j^{i,(5)} \asymp 1$, and thus
\begin{equation}
\sum_{R''} \bar{\PP}_t \big( \partial R'' \leadsto^{5,\sigma_5} \partial S_{2^K} \big) \asymp 1.
\end{equation}
This entails that
$$S_j^{i,(4)} \leq C_3 2^{- \alpha'(K-j)} \sum_{v \in S_{2^{K-3}}} (\bar{p}_v-p) \bar{\PP}_t(v \leadsto^{4,\sigma_4} \partial S_{2^K}),$$
and finally, by integrating and using Eq.(\ref{csq_russo}),
$$\int_0^1 \sum_{v \in R'^i_{2^j}} (\hat{p}_v - p) \: \ppp_t \big( v \leadsto^{4,\sigma_4} \partial R'^i_{2^j} \big) dt
\leq C_3 2^{- \alpha'(K-j)}.$$
The sum of Eq.(\ref{big_sum}) is thus less than
$$\sum_{j=k_0+3}^{K-4} 12 C_3 2^{- \alpha'(K-j)} \leq C_4 \sum_{r=0}^{\infty} 2^{- \alpha'r} < \infty,$$
which completes the proof.
\end{proof}

\begin{remark}
We will use this theorem in the next section to relate the so-called ``characteristic functions'' to the arm exponents at criticality. We will have use in fact only for the two cases $j=1$ and $j=4$, $\sigma=\sigma_4$: the general case (3rd case in the previous proof) will thus not be needed there. It is however of interest for other applications, for instance to say that for an interface in near-critical percolation, the dimension of the accessible perimeter is the same as at criticality: this requires the case $j=3$, $\sigma=\sigma_3$.
\end{remark}

\subsection{Some complements}

\subsubsection*{Theorem for more general annuli}

We will sometimes need a version of Theorem \ref{armnear} with non concentric rhombi:
\begin{equation}
\PPP(\partial r \leadsto \partial R) \asymp \PPP(\partial S_n \leadsto \partial S_N).
\end{equation}
for $S_{N/\tau} \subseteq R \subseteq S_N$ and $r \subseteq S_{N/2\tau}$. It results from the remark on more general annuli (Eq.(\ref{non_concentric})) combined with Theorem \ref{armnear} applied to $\PPP$ and translations of it. In particular, for any fixed $\eta > 0$,
\begin{equation}
\PPP(\partial S_n(v) \leadsto \partial S_N) \asymp \PPP(\partial S_n \leadsto \partial S_N)
\end{equation}
uniformly in $v \in S_{(1-\eta) N}$.

\subsubsection*{A complementary bound}

Following the same lines as in the previous proof, we can get a bound in the other direction:

\begin{proposition} \label{converse}
There exists some universal constant $\tilde{C} > 1$ such that for all $p > 1/2$,
\begin{equation}
\PP_p\big(0 \leadsto \partial S_{L(p)}\big) \geq \tilde{C} \: \PP_{1/2}\big(0 \leadsto \partial S_{L(p)}\big).
\end{equation}
\end{proposition}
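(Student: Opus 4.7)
The plan is to follow the same route as the proof of Theorem \ref{armnear} (Russo's formula combined with arm separation), but now to extract a \emph{lower} bound on the logarithmic derivative of $\PP_t(A)$ and then to obtain a universal lower bound on its integral via a comparison with the crossing probability $\PP_t(\mathcal{C}_H(S_N))$. Throughout, set $N := L(p)$ and $A := \{0 \leadsto \partial S_N\}$.

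For each $v \in S_N \setminus \{0\}$, I would choose a rhombus $R(v) = S_{r(v)}(v)$ with $r(v)$ of order $\min(\|v\|_\infty, d(v, \partial S_N))$, so that $R(v) \subset S_N$ and $0 \notin R(v)$. The comparison then runs in two directions. First, pivotality of $v$ for the horizontal crossing of $S_N$ forces four disjoint alternating arms from $v$ toward the four sides of $S_N$; since $R(v) \subset S_N$, these arms must in particular cross $\partial R(v)$, so
$$\PP_t(v \text{ piv.\ for } \mathcal{C}_H(S_N)) \leq \PP_t(v \leadsto^{4,\sigma_4} \partial R(v)).$$
Second, pivotality of $v$ for $A$ decomposes into four alternating arms from $v$ to $\partial R(v)$, joined with a 1-arm piece $0 \leadsto \partial R(v)$ and a 1-arm piece $\partial R(v) \leadsto \partial S_N$; the arm separation theorem (Theorem \ref{armsep}) combined with the quasi-multiplicativity of the 1-arm event yields the two-sided estimate
$$\PP_t(v \text{ piv.\ for } A) \asymp \PP_t(0 \leadsto \partial R(v))\, \PP_t(\partial R(v) \leadsto \partial S_N)\, \PP_t(v \leadsto^{4,\sigma_4} \partial R(v)) \asymp \PP_t(A)\, \PP_t(v \leadsto^{4,\sigma_4} \partial R(v)).$$
Together these two displays produce a universal $c_1 > 0$ with $\PP_t(v \text{ piv.\ for } A) \geq c_1\, \PP_t(A)\, \PP_t(v \text{ piv.\ for } \mathcal{C}_H(S_N))$ for every such $v$ and every $t \in [1/2,p]$.

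Summing over $v$, dividing by $\PP_t(A)$, and integrating from $1/2$ to $p$ (using Russo's formula for the monotone events $A$ and $\mathcal{C}_H(S_N)$) yields
\begin{align*}
\log \PP_p(A) - \log \PP_{1/2}(A) &\geq c_1 \int_{1/2}^p \sum_v \PP_t(v \text{ piv.\ for } \mathcal{C}_H(S_N))\, dt \\
&= c_1 \big( \PP_p(\mathcal{C}_H(S_N)) - \PP_{1/2}(\mathcal{C}_H(S_N)) \big) \\
&\geq c_1 (\tfrac{1}{2} - \epsilon_0),
\end{align*}
where the last inequality uses the self-duality identity $\PP_{1/2}(\mathcal{C}_H(S_N)) = 1/2$ (Eq.~(\ref{sym})) and the bound $\PP_p(\mathcal{C}_H(S_N)) \geq 1 - \epsilon_0$ coming from the definition of $L_{\epsilon_0}(p)$ combined with the duality between black horizontal and white vertical crossings of a square. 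This delivers the proposition with $\tilde{C} := \exp(c_1(\tfrac{1}{2} - \epsilon_0)) > 1$, since $\epsilon_0 \in (0,1/2)$.

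The delicate step is securing the $\asymp$ in the second comparison \emph{uniformly} in $v \in S_N$, not just in the bulk. For $v$ at macroscopic distance from $0$ and from $\partial S_N$, it is a direct application of Theorem \ref{armsep} and quasi-multiplicativity. For $v$ very close to $0$ or to $\partial S_N$, the rhombus $R(v)$ degenerates and one must check by a short direct argument that $\PP_t(v \text{ piv.\ for } A)$ is still of order $\PP_t(A)$ up to local combinatorial constants, while $\PP_t(v \leadsto^{4,\sigma_4} \partial R(v))$ is bounded away from $0$ and $\infty$; once this is in place, the rest is the same bookkeeping as in the proof of Theorem \ref{armnear}.
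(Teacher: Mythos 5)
Your route is genuinely different from the paper's. You compare the pivotal probability for $A=\{0 \leadsto \partial S_N\}$ to the pivotal probability for $\mathcal{C}_H(S_N)$ \emph{pointwise} over all of $S_N$, then integrate Russo's formula and use the $\Theta(1)$ variation of the crossing probability between $p=1/2$ and $p$. The paper instead restricts to a single bulk annulus $S_{2^{K-4},2^{K-3}}$, where the gluing argument needs no boundary corrections, and closes by invoking (via a forward reference) the estimate $|p-1/2|\,L(p)^2\pi_4(L(p)) \asymp 1$ from Proposition~\ref{L_exp}. Your route would, if correct, avoid that forward reference, which is an attractive feature.

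However, the two-sided estimate
$$\PP_t(v \text{ piv.\ for } A) \asymp \PP_t(0 \leadsto \partial R(v))\, \PP_t(\partial R(v) \leadsto \partial S_N)\, \PP_t(v \leadsto^{4,\sigma_4} \partial R(v))$$
is \emph{not} uniform in $v$: it fails near $\partial S_N$. Take $v$ at distance $d$ from $\partial S_N$ with $1 \ll d \ll N$ and $\|v\|_\infty \asymp N$; then $r(v) \asymp d$ and the right-hand side is of order $\pi_1(N)\,\pi_4(d)$. But pivotality for $A$ forces the four local arms at $v$ to survive from scale $d$ to scale $N$ in a half-plane geometry (one arm exits $S_N$ already at scale $d$, and the remaining three are confined near the boundary), so the true order is $\pi_1(N)\,\pi_4(d)\,(d/N)^{2+o(1)}$, by the half-plane $3$-arm exponent $\beta_3 = 2$ of Theorem~\ref{armcrit2}. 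Thus the right-hand side overcounts by a factor of roughly $(N/d)^2$. Your proposed patch for sites near the boundary -- that $\PP_t(v \text{ piv.\ for } A)$ remains of order $\PP_t(A)$ while $\PP_t(v \leadsto^{4,\sigma_4} \partial R(v))$ stays bounded away from $0$ and $\infty$ -- is also incorrect in this regime, since the former is $\PP_t(A)\,\pi_4(d)(d/N)^{2+o(1)} \ll \PP_t(A)$ and the latter is $\asymp \pi_4(d)$, which tends to $0$ as $d$ grows.

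The pointwise inequality $\PP_t(v \text{ piv.\ for } A) \geq c_1\, \PP_t(A)\, \PP_t(v \text{ piv.\ for } \mathcal{C}_H(S_N))$ is very likely still \emph{true} for all $v$, because $\PP_t(v \text{ piv.\ for } \mathcal{C}_H(S_N))$ acquires \emph{the same} half-plane factor near $\partial S_N$; but establishing this requires matching the half-plane (and, near corners, quarter-plane) arm probabilities on both sides, which your proposal neither carries out nor identifies as the actual difficulty. As written, the argument has a genuine gap. The most economical repair -- essentially what the paper does -- is to restrict the summation to a bulk annulus such as $S_{N/16,N/8}$, where all your estimates hold cleanly, and then either quote Proposition~\ref{L_exp} or establish (as the paper's proof of Proposition~\ref{L_exp} does) that the bulk sites already account for a constant fraction of the variation of $\PP_t(\mathcal{C}_H(S_N))$.
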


In other words, the one-arm probability varies of a non-negligible amount, like the crossing probability: we begin to ``feel'' the super-critical behavior.

\begin{proof}
Take $K$ such that $2^K \leq L(p) < 2^{K+1}$ and $(\PPP_t)$ the linear interpolation between $\PP_{1/2}$ and $\PP_p$. By gluing arguments, for $A = \{ 0 \leadsto \partial S_{L(p)} \}$, for any $v \in S_{2^{K-4},2^{K-3}}$,
\begin{align*}
\ppp_t(\text{$v$ is pivotal for $A$}) & \\
& \hspace{-2cm} \geq C_1 \ppp_t\big( 0 \leadsto \partial S_{2^{K-5}} \big) \ppp_t \big( \partial S_{2^{K-2}} \leadsto \partial S_{L(p)} \big) \ppp_t \big( v \leadsto^{4,\sigma_4} \partial S_{2^{K-5}}(v) \big)\\
& \hspace{-2cm} \geq C_2 \ppp_t\big( 0 \leadsto \partial S_{2^K} \big) \ppp_t \big( v \leadsto^{4,\sigma_4} \partial S_{2^{K-5}}(v) \big),
\end{align*}
so that
\begin{align*}
\frac{d}{dt} \log \big[ \ppp_t( A ) \big] & \geq \sum_{v \in S_{2^{K-4},2^{K-3}}} (p - 1/2) \PPP_t \big( v \leadsto^{4,\sigma_4} \partial S_{2^{K-5}}(v) \big)\\
& \geq C_3 (p-1/2) L(p)^2 \PPP_t \big( 0 \leadsto^{4,\sigma_4} \partial S_{L(p)} \big),
\end{align*}
since each of the sites $v \in S_{2^{K-4},2^{K-3}}$ produces a contribution of order $\PPP_t \big( 0 \leadsto^{4,\sigma_4} \partial S_{L(p)} \big)$. Proposition \ref{L_exp}, proved later\footnote{This does not raise any problem since we have included this complementary bound only for the sake of completeness, and we will not use it later.}, allows to conclude.
\end{proof}

\section{Consequences for the characteristic functions\label{charac}}

\subsection{Different characteristic lengths\label{different}}

Roughly speaking, a \emph{characteristic length} is a quantity intended to measure a ``typical'' scale of the system. There may be several natural definitions of such a length, but we usually expect the different possible definitions to produce lengths that are of the same order of magnitude. For two-dimensional percolation, the three most common definitions are the following:

\subsubsection*{Finite-size scaling}

The lengths $L_{\epsilon}$ that we have used throughout the paper, introduced in \cite{CCF}, are known as ``finite-size scaling characteristic lengths'':
\begin{equation}
L_{\epsilon}(p)=\left\lbrace
\begin{array}{c l}
\min\{n \text{\: s.t. \:} \PP_p(\mathcal{C}_H([0,n] \times [0,n])) \leq \epsilon \} & \text{when}\: p<1/2, \\[2mm]
\min\{n \text{\: s.t. \:} \PP_p(\mathcal{C}_H^*([0,n] \times [0,n])) \leq \epsilon \} & \text{when}\: p>1/2. \\
\end{array} \right.
\end{equation}

\subsubsection*{Mean radius of a finite cluster}

The (quadratic) mean radius measures the ``typical'' size of a finite cluster. It can be defined by the formula
\begin{equation}
\xi(p) = \Bigg[ \frac{1}{\EE_p \big[|C(0)| ; |C(0)|<\infty\big]} \sum_x \|x\|_{\infty}^2 \PP_p\big(0 \leadsto x , |C(0)|<\infty\big) \Bigg]^{1/2}.
\end{equation}

\subsubsection*{Connection probabilities}

A third possible definition would be via the rate of decay of correlations. Take first $p<1/2$ for example. For two sites $x$ and $y$, we consider the connection probability between them
\begin{equation}
\tau_{x,y} := \PP_p\big( x \leadsto y \big),
\end{equation}
and then
\begin{equation}
\tau_n := \sup_{x \in \partial S_n} \tau_{0,x},
\end{equation}
the maximum connection probability between sites at distance $n$ (using translation invariance). For any $n, m \geq 0$, we have
$$\tau_{n+m} \geq \tau_n \tau_m,$$
in other words $(-\log \tau_n)_{n \geq 0}$ is sub-additive, which implies the existence of a constant $\tilde{\xi}(p)$ such that
\begin{equation}
- \frac{\log \tau_n}{n} \longrightarrow \frac{1}{\tilde{\xi}(p)}
\end{equation}
when $n \to \infty$. Note the following a-priori bound:
\begin{equation}
\PP_p\big( 0 \leadsto x \big) \leq e^{-\|x\|_{\infty}/\tilde{\xi}(p)}.
\end{equation}

For $p>1/2$, we simply use the symmetry $p \leftrightarrow 1-p$: we define
\begin{equation}
\tau^{\ast}_n := \sup_{x \in \partial S_n} \PP_p\big( 0 \leadsto^{\ast} x \big)
\end{equation}
and then $\tilde{\xi}(p)$ in the same way. We have in this case
\begin{equation}
\PP_p\big( 0 \leadsto^{\ast} x \big) \leq e^{-\|x\|_{\infty}/\tilde{\xi}(p)}.
\end{equation}

Note that the symmetry $p \leftrightarrow 1-p$ gives immediately
$$\tilde{\xi}(p) = \tilde{\xi}(1-p).$$

\subsubsection*{Relation between the different lengths}

As expected, these characteristic lengths turn out to be all of the same order of magnitude: we will prove in Section \ref{Lsection} that $L_{\epsilon} \asymp L_{\epsilon'}$ for any two $\epsilon, \epsilon' \in (0,1/2)$, in Section \ref{expsection} that $L \asymp \tilde{\xi}$, and in Section \ref{xi} that $L \asymp \xi$.

\subsection{Main critical exponents}

We focus here on three functions commonly used to describe the macroscopic behavior of percolation. We have already encountered some of them:

\begin{enumerate}[(i)]
\item $\xi(p) = \Bigg[ \frac{1}{\EE_p \big[|C(0)| ; |C(0)|<\infty\big]} \sum_x \|x\|_{\infty}^2 \PP_p\big(0 \leadsto x , |C(0)|<\infty\big) \Bigg]^{1/2}$ the mean radius of a finite cluster.

\item $\theta(p) := \PP_p(0 \leadsto \infty)$. This function can be viewed as the density of the infinite cluster $C_{\infty}$, in the following sense:
\begin{equation}
\frac{1}{|S_N|} \big|S_N \cap C_{\infty}\big| \stackrel{\text{a.s.}}{\longrightarrow} \theta(p)
\end{equation}
when $N \to \infty$.

\item $\chi(p) = \EE_p \big[|C(0)| ; |C(0)| < \infty\big]$ the average size of a finite cluster.
\end{enumerate}

\begin{theorem}[Critical exponents]
The following power-law estimates hold:

\begin{enumerate}[(i)]
\item When $p \to 1/2$,
\begin{equation}
\xi(p) \asymp L(p) \approx |p-1/2|^{-4/3}.
\end{equation}

\item When $p \to 1/2^+$,
\begin{equation}
\theta(p) \approx (p-1/2)^{5/36}.
\end{equation}

\item When $p \to 1/2$,
\begin{equation}
\chi(p) \approx |p-1/2|^{-43/18}.
\end{equation}
\end{enumerate}
\end{theorem}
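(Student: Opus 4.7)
The central step is to determine the exponent of $L(p)$; the remaining three exponents will then follow by expressing each quantity as a sum of arm probabilities on scale $L(p)$ and invoking Theorem \ref{armnear} together with the values of the arm exponents from Theorem \ref{armcrit}.

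\textbf{Step 1 (the length exponent).} Fix $p<1/2$ and write $N=L(p)$. By definition of $L$,
\[
\PP_{1/2}(\mathcal{C}_H(S_N))-\PP_p(\mathcal{C}_H(S_N))=\tfrac12-\epsilon_0
\]
is of order one. On the other hand, Russo's formula gives
\[
\PP_{1/2}(\mathcal{C}_H(S_N))-\PP_p(\mathcal{C}_H(S_N))=\int_p^{1/2}\sum_{v\in S_N}\PP_t(v\text{ is pivotal})\,dt.
\]
A pivotal site carries four alternating arms from $v$ to $\partial S_N$, and since $\PP_t$ stays between $\PP_p$ and $\PP_{1-p}$ throughout the integration, Theorem \ref{armnear} (applied in the non-concentric variant) yields $\PP_t(v\text{ pivotal})\asymp \pi_4(\|v\|\vee n_0(4)\,,N)$ uniformly in $t$. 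Summing over $v\in S_N$ with quasi-multiplicativity gives $\sum_v \PP_t(v\text{ pivotal})\asymp N^2\pi_4(N)$. Combining,
\[
1\asymp |p-\tfrac12|\cdot N^2\pi_4(N)\approx |p-\tfrac12|\cdot N^{\,2-\alpha_4}=|p-\tfrac12|\cdot N^{3/4},
\]
using $\alpha_4=(4^2-1)/12=5/4$. Inverting gives $L(p)\approx |p-1/2|^{-4/3}$.

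\textbf{Step 2 ($\theta$).} For $p>1/2$ I claim $\theta(p)\asymp \PP_p(0\leadsto\partial S_{L(p)})$. The upper bound is trivial. For the lower bound, RSW combined with the uniform crossing estimate at scale $L(p)$ (and its extension to $2L(p)\times L(p)$ parallelograms in the ``hard direction'') shows that with probability bounded below, every annulus $S_{2^kL(p),2^{k+1}L(p)}$ contains a black circuit and these circuits are connected by FKG. Hence a cluster reaching $\partial S_{L(p)}$ extends to infinity with positive probability. By Theorem \ref{armnear}, $\PP_p(0\leadsto\partial S_{L(p)})\asymp \pi_1(L(p))\approx L(p)^{-\alpha_1}=L(p)^{-5/48}$. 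Combining with Step~1,
\[
\theta(p)\approx (p-\tfrac12)^{(4/3)(5/48)}=(p-\tfrac12)^{5/36}.
\]

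\textbf{Step 3 ($\chi$ and $\xi$).} For $p<1/2$ all clusters are finite so $\chi(p)=\sum_x\PP_p(0\leadsto x)$. The uniform exponential decay beyond $L(p)$ (Lemma \ref{explem}) makes the tail $\|x\|>L(p)$ negligible. For $\|x\|\leq L(p)$, Theorem \ref{armnear} (non-concentric form) gives $\PP_p(0\leadsto x)\asymp \PP_{1/2}(0\leadsto x)$, and the standard two-point estimate
\[
\PP_{1/2}(0\leadsto x)\asymp \pi_1(\|x\|)^2\approx \|x\|^{-5/24}
\]
(upper bound via BK applied to two disjoint arms landing near the midpoint, lower bound via FKG-gluing of two well-separated arms from Theorem \ref{armsep}) yields
\[
\chi(p)\approx \sum_{\|x\|\leq L(p)}\|x\|^{-5/24}\approx L(p)^{2-5/24}=L(p)^{43/24}\approx(p-\tfrac12)^{-43/18}.
\]
The case $p>1/2$ follows by duality on finite black clusters. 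An identical computation for $\xi(p)^2\chi(p)=\sum_x\|x\|^2\PP_p(0\leadsto x,|C(0)|<\infty)\asymp L(p)^{4-5/24}$ gives $\xi(p)\asymp L(p)$.

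\textbf{Main obstacle.} Everything hinges on Step~1, and the delicate point there is the identification $\sum_{v\in S_N}\PP_t(v\text{ pivotal})\asymp N^2\pi_4(N)$: one must control boundary sites (whose pivotal probability is governed by half-plane arm events) and verify that the estimate is uniform in $t\in[p,1/2]$. This uniformity is exactly what Theorem \ref{armnear} was built for; once it is deployed, Steps~2--4 reduce to substituting $\alpha_1=5/48$, $\alpha_4=5/4$, and the two-point exponent $5/24$, and using $L(p)\approx |p-1/2|^{-4/3}$.
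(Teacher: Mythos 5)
Your proposal follows essentially the same route as the paper: Step 1 is the paper's Proposition \ref{L_exp} (Russo's formula counting pivotal sites), Step 2 is Corollary \ref{notfar} plus Theorem \ref{armnear}, and Step 3 is the two-point/moment estimates of Section \ref{xi}. The arithmetic throughout is correct. Two places where the plan is thinner than the argument requires, though:

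First, in Step 1 the pointwise identification $\PP_t(v\text{ pivotal})\asymp\pi_4(\cdot,N)$ is not uniform over $v\in S_N$: for $v$ near $\partial S_N$ the pivotal probability is governed by a short four-arm event combined with a three-arm half-plane event, not by $\pi_4$ alone, and the argument $\|v\|$ you wrote is in any case the wrong quantity (it should be $\mathrm{dist}(v,\partial S_N)$). Theorem \ref{armnear} by itself does not resolve this; one either needs the half-plane exponent $\beta_3=2$ to control the boundary layer (sketched in the paper's closing remark to Proposition \ref{L_exp}), or the paper's actual device, a short lemma showing that restricting the parameter change to the interior box $[\eta L,(1-\eta)L]^2$ still produces an order-one variation of the crossing probability. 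You flag this as the main obstacle, which is correct, but ``Theorem \ref{armnear} handles it'' is not an accurate summary of what closes the gap.

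Second, in Step 2 the phrase ``with probability bounded below, every annulus $S_{2^kL,2^{k+1}L}$ contains a black circuit'' does not by itself yield $\theta(p)\gtrsim\PP_p(0\leadsto\partial S_{L(p)})$. RSW alone gives each annulus-circuit probability $\geq\delta>0$, and the infinite product of such bounds vanishes. What is actually needed is that the circuit (equivalently, ``easy-direction'' crossing) probabilities tend to $1$ fast as $k$ grows, which is the content of Lemma \ref{explem} (used explicitly in Corollary \ref{notfar} via $1-C_1e^{-C_2 2^k}$ factors, whose product is positive). You invoke that lemma in Step 3 but not here, so as written Step 2 has a genuine gap — easily filled, but it must be the exponential decay property doing the work, not a bare RSW bound.
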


The corresponding exponents are usually denoted by (respectively) $\nu$, $\beta$ and $\gamma$. This theorem is proved in the next sub-sections by combining the arm exponents for critical percolation with the estimates established for near-critical percolation.

\subsection{Critical exponent for $L$\label{Lsection}}

We derive here the exponent for $L_{\epsilon}(p)$ by counting the sites which are pivotal for the existence of a crossing in a box of size $L_{\epsilon}(p)$. These pivotal sites are exactly those for which the $4$-arm event $\Ab_{4,\sigma_4}^{./\bar{I}}$ with alternating colors ($\sigma_4 = BWBW$) and sides ($\bar{I}=$ right, top, left and bottom sides):

\begin{proposition}[\cite{Ke4,SmW}] \label{L_exp}
For any fixed $\epsilon \in (0,1/2)$, the following equivalence holds:
\begin {equation} \label{nbpivot}
|p-1/2| \big(L_{\epsilon}(p)\big)^2 \pi_4(L_{\epsilon}(p)) \asymp 1.
\end{equation}
\end{proposition}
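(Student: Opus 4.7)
The plan is to apply Russo's formula to the horizontal crossing event $A_n := \mathcal{C}_H([0,n] \times [0,n])$ with $n = L_\epsilon(p)$ and integrate from $p$ to $1/2$. By symmetry we may assume $p<1/2$. Since $\PP_{p'}(A_n)$ is non-decreasing in $p'$, we have $L_\epsilon(p') \geq n$ for every $p' \in [p,1/2]$; in particular $\PP_{p'}$ lies between $\PP_p$ and $\PP_{1-p}$ and $n \leq L(p')$, placing us in the regime where Theorem \ref{armnear} (together with its non-concentric annulus version, Eq.\,(\ref{non_concentric})) applies uniformly in $p'$.

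Russo's formula (Theorem \ref{russo}) gives, since $A_n$ is increasing,
$$\frac{d}{dp'} \PP_{p'}(A_n) = \sum_v \PP_{p'}(v \text{ is pivotal for } A_n),$$
and $v \in [0,n]^2$ is pivotal for $A_n$ precisely when the $4$-arm event $\Ab_{4,\sigma_4}^{./\bar{I}}$ with alternating colors and landing areas equal to the four sides of the box (see the remark before Theorem \ref{armsep}) occurs from $v$. The central step is to establish the uniform two-sided estimate
\begin{equation} \label{Pplan}
\sum_v \PP_{p'}(v \text{ is pivotal for } A_n) \asymp n^2 \pi_4(n), \qquad p' \in [p,1/2],
\end{equation}
with constants independent of $n$ and $p'$.

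For the lower bound in \eqref{Pplan}, I restrict to ``bulk'' sites $v$ at distance at least $n/4$ from every side of the box: arm separation and extendability (Theorem \ref{armsep} together with Proposition \ref{easy_prop} and Eq.\,(\ref{non_concentric})) identify the pivotality probability with the probability of four alternating arms from $v$ at scale $n/4$, and Theorem \ref{armnear} compares this with $\pi_4(n)$; summing over the $\asymp n^2$ bulk sites yields the bound. The bulk contribution to the upper bound is handled identically. For near-boundary sites (within distance $n/2$ of one of the four sides of the box), pivotality forces three arms from $v$ in the half-plane away from that side, extending to scale $\asymp n$; the universal $3$-arm half-plane exponent $\beta_3=2$ (Theorem \ref{armuniv}) bounds the probability of this event by $O(n^{-2})$, so the total near-boundary contribution is $O(1)$. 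Since $\pi_4(n) \geq \pi_5(n)$ (by Reimer's inequality, writing a $BWBWB$ arm event as the disjoint occurrence of a $BWBW$ arm event and a single black arm) and $\pi_5(n) \asymp n^{-2}$ (again Theorem \ref{armuniv}), one has $n^2 \pi_4(n) \gtrsim 1$, so this $O(1)$ near-boundary contribution does not swamp the bulk, and \eqref{Pplan} follows.

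Integrating \eqref{Pplan} over $p' \in [p,1/2]$ then yields
$$\PP_{1/2}(A_n) - \PP_p(A_n) \asymp (1/2-p)\, n^2 \pi_4(n).$$
The left-hand side equals $1/2 - \PP_p(A_n)$, which lies in $[1/2-\epsilon,\, 1/2]$ by the definition of $n = L_\epsilon(p)$, and is therefore of order one; this forces $(1/2-p)\, n^2 \pi_4(n) \asymp 1$, as claimed. The main obstacle is the estimate \eqref{Pplan}: the bulk part is a straightforward consequence of the separation theorem and the near-critical comparison, but the near-boundary contribution has to be controlled by the universal $3$-arm half-plane exponent, and one must check via the universal $5$-arm exponent that this contribution does not dominate the bulk.
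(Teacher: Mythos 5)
Your overall strategy is the \emph{alternative} route that the paper itself mentions in the remark following its proof: establish directly that $\sum_v \PPP\big(v \leadsto^{4,\sigma_4,\bar{I}} \partial [0,n]^2\big) \asymp n^2 \pi_4(n)$ uniformly for $n \leq L(p)$, and then integrate Russo's formula. The paper's official proof instead avoids this two-sided estimate: it only modifies the percolation parameter inside the sub-box $[\eta n, (1-\eta)n]^2$ and uses a small RSW lemma (extending crossings of $[0,n]\times[0,m]$ to $[0,(1+\eta)n]\times[0,m]$) to show that the restricted variation is still of order one. That trick sidesteps the near-boundary sites entirely. Your route is legitimate and is the one used e.g.\ in \cite{SmW}, but it requires the near-boundary estimate to be done with more care than you do.

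The gap is in your treatment of near-boundary sites. You assert that for every $v$ within distance $n/2$ of a side, the pivotality probability is $O(n^{-2})$ via the $3$-arm half-plane exponent, and so their total is $O(1)$. This is false: a site $v$ at distance $d$ from the nearest side (and far from the others) has pivotality probability of order $\pi_4(d)\,\pi_3^{\mathbb{H}}(d|n)$, where the first factor is the $4$-arm event up to scale $d$ and the second the $3$-arm half-plane event from scale $d$ to scale $n$. Since $\pi_3^{\mathbb{H}}(d|n)\asymp (d/n)^2$, this product is $\asymp \pi_4(d)\,(d/n)^2$, which for $d \asymp n$ is $\asymp \pi_4(n) \gg n^{-2}$, not $O(n^{-2})$; the point is that the $3$-arm half-plane bound only kicks in past scale $d$, not from scale $1$. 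The correct summation is
\begin{equation*}
\sum_{d=1}^{n/4} C n\, \pi_4(d)\, \pi_3^{\mathbb{H}}(d|n)
\;\asymp\; n\, \pi_4(n) \sum_{d=1}^{n/4} \frac{\pi_3^{\mathbb{H}}(d|n)}{\pi_4(d|n)}
\;\lesssim\; n\, \pi_4(n) \sum_{d=1}^{n/4} (d/n)^{\alpha'}
\;\asymp\; n^2 \pi_4(n),
\end{equation*}
where the middle step uses quasi-multiplicativity and the bound $\pi_4(d|n) \geq \pi_5(d|n)/\pi_1(d|n) \gtrsim (d/n)^{2-\alpha'}$ coming from the universal $5$-arm exponent and the a-priori one-arm estimate (Eq.\,(\ref{apriori})). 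So the near-boundary contribution is of order $n^2\pi_4(n)$, \emph{not} $O(1)$ — it is comparable to the bulk, which fortunately is still what you need. Two further items to fix: your ``bulk'' (distance $\geq n/4$ from every side) and ``near-boundary'' (distance $\leq n/2$ from some side) regions overlap; and corner sites (close to two perpendicular sides) need a separate factorization involving a $2$-arm wedge event, bounded for instance by the $2$-arm half-plane exponent $\beta_2=1$, with a similar summation showing their total contribution is also $\lesssim n^2\pi_4(n)$.
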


Recall now the value $\alpha_4=5/4$ of the $4$-arm exponent, stated in Theorem \ref{armcrit}. If we plug it into Eq.(\ref{nbpivot}), we get the value of the characteristic length exponent: when $p \to 1/2$,
$$1 \approx |p-1/2| \big(L_{\epsilon}(p)\big)^2 \big(L_{\epsilon}(p)\big)^{-5/4} = |p-1/2| \big(L_{\epsilon}(p)\big)^{3/4},$$
so that indeed
$$L_{\epsilon}(p) \approx |p-1/2|^{-4/3}.$$

\begin{proof}
For symmetry reasons, we can assume that $p > 1/2$. The proof goes as follows. We first apply Russo's formula to estimate the variation in probability of the event $\mathcal{C}_H([0,L(p)] \times [0,L(p)])$ between $1/2$ and $p$, which makes appear the events $\bar{A}_{4,\sigma_4}^{./\bar{I}}$. By construction of $L(p)$, the variation of the crossing event is of order $1$, and the sites that are ``not too close to the boundary'' (such that none of the $4$ arms can become too small -- see Figure \ref{fig_pivotal}) each produce a contribution of the same order by Theorem \ref{armnear}: proving that they all together produce a non-negligible variation in the crossing probabilities will thus imply the result. For that, we need the following lemma:
\begin{lemma}
For any $\delta >0$, there exists $\eta_0 >0$ such that for all $p$, $\PPP$ between $\PP_p$ and $\PP_{1-p}$, we have: for any parallelogram $[0,n]\times[0,m]$ with sides $n, m \leq L(p)$ and aspect ratio less than $2$ (\emph{ie} such that $1/2 \leq n/m \leq 2$), for any $\eta \leq \eta_0$,
\begin{equation}
\big| \PPP (\mathcal{C}_H([0,n] \times [0,m])) - \PPP (\mathcal{C}_H([0,(1+\eta) n] \times [0,m])) \big| \leq \delta.
\end{equation}
\end{lemma}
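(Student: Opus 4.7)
Set $A = \mathcal{C}_H([0,n]\times[0,m])$, $B = \mathcal{C}_H([0,(1+\eta)n]\times[0,m])$, and write $T_\eta = [n,(1+\eta)n]\times[0,m]$ for the thin extension strip. First I would note that any horizontal crossing of the larger parallelogram, by having to pass through $\{n\}\times[0,m]$, restricts to a horizontal crossing of the smaller one; hence $B \subseteq A$ and $|\PPP(A) - \PPP(B)| = \PPP(A \cap B^c) \geq 0$. Only this quantity remains to be controlled.

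The plan is to combine planar duality with a stacked-square RSW estimate inside $T_\eta$. By the self-duality of site percolation on the triangular lattice, $B^c$ is the event $\mathcal{C}_V^{\ast}([0,(1+\eta)n]\times[0,m])$, i.e.\ the existence of a white ($\ast$-)vertical crossing $\gamma$ of the larger parallelogram. On $A \cap B^c$, the black crossing $c$ of $R := [0,n]\times[0,m]$ and $\gamma$ coexist disjointly. Applying duality inside $R$ itself, the presence of $c$ forbids any white vertical crossing of $R$, so $\gamma$ cannot stay in $R$: it must enter $T_\eta$ and detour around the right endpoint of $c$ on $\{n\}\times[0,m]$. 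The quantitative core of the proof is to show that such a white detour is rare when $\eta$ is small.

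Next I would partition $T_\eta$ into the $k := \lfloor m/(\eta n)\rfloor$ disjoint squares $Q_i := [n,(1+\eta)n]\times[i\eta n,(i+1)\eta n]$, $0\leq i<k$, of side $\eta n$. The aspect-ratio condition $m \geq n/2$ gives $k \geq \frac{1}{2\eta}-1$, and $\eta n \leq n \leq L(p)$, so the RSW theorem (Theorem~\ref{thm_RSW}) yields a uniform bound $\PPP(\mathcal{C}_H(Q_i)) \geq \delta_1 > 0$, with $\delta_1 = \delta_1(\epsilon_0)$. The events $\mathcal{C}_H(Q_i)$ depend on disjoint sets of sites and are therefore independent, so the probability that none of them occurs is at most $(1-\delta_1)^k \leq (1-\delta_1)^{\frac{1}{2\eta}-1}$, which tends to $0$ as $\eta \to 0$. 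A black horizontal crossing of $Q_i$ spans $T_\eta$ across the slab $[i\eta n,(i+1)\eta n]$ and is a barrier that no disjoint white path descending vertically through that slab inside $T_\eta$ can cross.

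The main obstacle is the topological step tying these two pieces together: I have to show that on $A \cap B^c$ the white detour of $\gamma$ through $T_\eta$ necessarily spans a macroscopic range of consecutive slabs, so that a black horizontal crossing of any one of the corresponding $Q_i$'s produces a contradiction. The delicate point is to rule out the ``shallow'' situation in which $\gamma$ merely makes a small U-shape excursion into $T_\eta$ confined to a single slab around the $y$-coordinate where $c$ hits $\{n\}\times[0,m]$; I would handle this either by slightly enlarging the extension strip into $R$ before applying the stacked-squares bound (so that any detour becomes a genuine top-to-bottom crossing of the enlarged strip, to which the argument above applies without modification), or by a separate estimate of the local event using the half-plane 2-arm bound of Theorem~\ref{armuniv}. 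Once this step is secured, $\PPP(A \cap B^c) \leq (1-\delta_1)^{\frac{1}{2\eta}-1}$, and choosing $\eta_0$ so that this quantity is at most $\delta$ concludes the proof.
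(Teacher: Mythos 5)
Your approach is genuinely different from the paper's. The paper argues directly in terms of extendability: it conditions on the lowest crossing of $[0,n]\times[0,m]$ and applies RSW in $O(\log(1/\eta))$ concentric (quarter-)annuli, first around the corners $(n,0)$ and $(n,m)$ to show the endpoint is with high probability at distance $\geq \eta^{3/4}n$ from both, then around the endpoint itself to extend the crossing across the thin strip -- exactly the mechanism of Lemma~\ref{well_sep}. You instead try to bound $\PPP(A\cap B^c)$ by duality and a stacked-squares estimate in $T_\eta$.

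The stacked-squares step has a genuine gap, and I do not think it is repairable in the form you give. On $A\cap B^c$ the white top-to-bottom crossing $\gamma$ of $[0,(1+\eta)n]\times[0,m]$ is under no obligation to descend through $T_\eta$; it is free to run through the interior of $R=[0,n]\times[0,m]$ and only dip into $T_\eta$ in a small neighbourhood of the $y$-level $y_0$ where $c$ hits $\{n\}\times[0,m]$, in order to get around the tip of $c$. A black horizontal crossing of $Q_i$ is a barrier \emph{across} $T_\eta$ at height $\approx i\eta n$, but $\gamma$ simply passes to its left inside $R$; the events $\{\mathcal{C}_H(Q_i) \text{ holds}\}$ for $i$ away from $\lfloor y_0/(\eta n)\rfloor$ therefore place no constraint at all on $A\cap B^c$, and the product bound $(1-\delta_1)^k$ does not bound $\PPP(A\cap B^c)$. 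You flag this as the ``shallow excursion'' case to be ruled out, but it is not exceptional -- it is the typical configuration, so there is nothing to rule out. Your first fix (widening the strip slightly into $R$) does not help, since $\gamma$ can escape yet further left into $R$ unless the strip is widened to essentially all of $R$, at which point RSW gives nothing. Your second fix (a local half-plane arm estimate near the tip of $c$) is the right instinct, but to make it precise you need to localize where $c$ lands and gain independence from what happens above the lowest crossing, which is exactly the conditioning-plus-concentric-annuli scheme of the paper; as stated it is not a proof.
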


\begin{figure}
\begin{center}
\includegraphics[width=9cm]{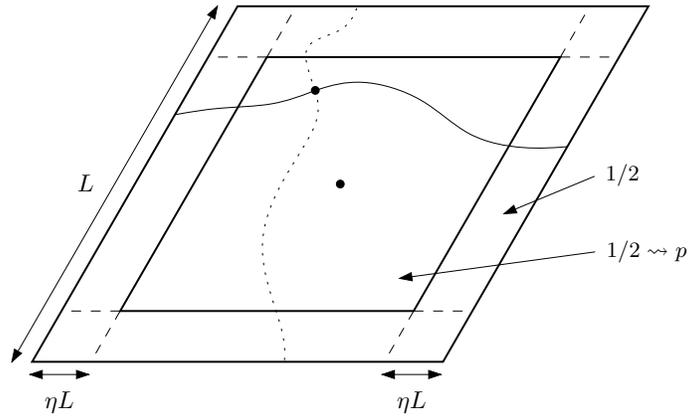}
\caption{We restrict to the sites at distance at least $\eta L$ from the boundary of $[0,L]^2$: these sites produce contributions of the same order, since the $4$ arms stay comparable in size.}
\label{fig_pivotal}
\end{center}
\end{figure}

\begin{proof}[Proof of lemma]
First, we clearly have
$$\PPP (\mathcal{C}_H([0,n] \times [0,m])) \geq \PPP (\mathcal{C}_H([0,(1+\eta) n] \times [0,m])).$$

\begin{figure}
\begin{center}
\includegraphics[width=7.5cm]{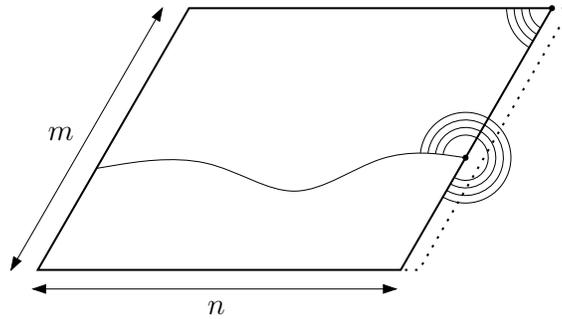}
\caption{We extend a crossing of $[0,n] \times [0,m]$ into a crossing of $[0,(1+\eta) n] \times [0,m]$ by applying RSW in concentric annuli.}
\label{extension}
\end{center}
\end{figure}

For the converse bound, we use the same idea as for Lemma \ref{well_sep}, we apply RSW in concentric annuli (see Figure \ref{extension}). By considering (parts of) annuli centered on the top right corner of $[0,n] \times [0,m]$, with radii between $\eta^{3/4} n$ and $\sqrt{\eta} n$, we see that the probability for a crossing to arrive at a distance less than $\eta^{3/4} n$ from this corner is at most $\delta/100$ for $\eta_0$ small enough. Assume this is not the case, condition on the lowest crossing and apply RSW in annuli between scales $\eta n$ and $\eta^{3/4} n$: if $\eta_0$ is sufficiently small, with probability at least $1 - \delta/100$, this crossing can be extended into a crossing of $[0,(1+\eta) n] \times [0,m]$.
\end{proof}

Let us return to the proof of the proposition. Take $\eta_0$ associated to $\delta = \epsilon / 100$ by the lemma, and assume that instead of performing the change $1/2 \leadsto p$ in the whole box $[0,L(p)]^2$, we make it only for the sites in the sub-box $[\eta L(p),(1-\eta) L(p)]^2$, for $\eta=\eta_0/4$. It amounts to consider the measure $\PPP^{(\eta)}$ with parameters 
\begin{equation}
\hat{p}^{(\eta)}_v=\left|
\begin{array}{c l}
p & \text{if}\: v \in [\eta L(p),(1-\eta) L(p)]^2, \\[2mm]
1/2 & \text{otherwise.}
\end{array} \right.
\end{equation}
We are going to prove that $\PPP^{(\eta)}(\mathcal{C}_H([0,L(p)]^2))$ and $\PP_p(\mathcal{C}_H([0,L(p)]^2))$ are very close by showing that they are both very close to $\PP_p(\mathcal{C}_H([\eta L(p),(1-\eta) L(p)]^2)) = \PPP^{(\eta)}(\mathcal{C}_H([\eta L(p),(1-\eta) L(p)]^2))$. Indeed, for any $\ppp \in \{\PPP^{(\eta)}, \PP_p\}$, we have by the lemma
\begin{align*}
\ppp(\mathcal{C}_H([0,L(p)]^2)) & \leq \ppp(\mathcal{C}_H([\eta L(p),(1-\eta) L(p)] \times [0,L(p)]))\\
& = 1 - \ppp(\mathcal{C}^*_V([\eta L(p),(1-\eta) L(p)] \times [0,L(p)]))\\
& \leq 1 - \big(\ppp(\mathcal{C}^*_V([\eta L(p),(1-\eta) L(p)]^2)) - 2 \delta\big)\\
& = \ppp(\mathcal{C}_H([\eta L(p),(1-\eta)L(p)]^2)) + 2 \delta
\end{align*}
and in the other way,
\begin{align*}
\ppp(\mathcal{C}_H([0,L(p)]^2)) & \geq \ppp(\mathcal{C}_H([\eta L(p),(1-\eta) L(p)] \times [0,L(p)])) - 2\delta\\
& = 1 - \ppp(\mathcal{C}^*_V([\eta L(p),(1-\eta) L(p)] \times [0,L(p)])) - 2\delta\\
& \geq 1 - \ppp(\mathcal{C}^*_V([\eta L(p),(1-\eta) L(p)]^2)) - 2\delta\\
& = \ppp(\mathcal{C}_H([\eta L(p),(1-\eta)L(p)]^2)) - 2\delta.
\end{align*}
The claim follows readily, in particular
\begin{equation}
\PPP^{(\eta)}(\mathcal{C}_H([0,L(p)]^2)) \geq \PP_p(\mathcal{C}_H([0,L(p)]^2)) - 4 \delta,
\end{equation}
which is at least $(1/2 + \epsilon) - 4 \delta \geq 1/2 + \epsilon/2$ by the very definition of $L(p)$. It shows as desired that the sites in $[\eta L(p),(1-\eta)L(p)]^2$ produce all together a non-negligible contribution.

Now, Russo's formula applied to the interpolating measures $(\PPP^{(\eta)}_t)_{t \in [0,1]}$ (with parameters $\hat{p}^{(\eta)}_v(t)= t \times \hat{p}^{(\eta)}_v + (1-t) \times 1/2$) and the event $\mathcal{C}_H([0,L(p)]^2)$ gives
\begin{align*}
\int_0^1 \sum_{v \in [\eta L(p),(1-\eta)L(p)]^2} \big(p - 1/2\big) \: \PPP^{(\eta)}_t \big( v \leadsto^{4,\sigma_4,\bar{I}} \partial [0,L(p)]^2 \big) dt & \\
& \hspace{-5cm} = \PPP^{(\eta)}(\mathcal{C}_H([0,L(p)]^2)) - \PP_{1/2}(\mathcal{C}_H([0,L(p)]^2)),
\end{align*}
and this quantity is at least $\epsilon/2$, and thus of order $1$.

Finally, it is not hard to see that once $\eta$ fixed, we have (uniformly in $p$, $\PPP$ between $\PP_p$ and $\PP_{1-p}$, and $v \in [\eta L(p),(1-\eta)L(p)]^2$)
\begin{align*}
\PPP \big( v \leadsto^{4,\sigma_4,\bar{I}} \partial [0,L(p)]^2 \big) & \asymp \PPP \big( v \leadsto^{4,\sigma_4} \partial S_{\eta L(p)}(v)\big)\\
& \asymp \PP_{1/2}(0 \leadsto^{4,\sigma_4} \partial S_{\eta L(p)})\\
& \asymp \PP_{1/2}(0 \leadsto^{4,\sigma_4} \partial S_{L(p)}),
\end{align*}
which yields the desired conclusion.
\end{proof}

\begin{remark}
Note that the intermediate lemma was required for the lower bound only, the upper bound can be obtained directly from Russo's formula. To get the lower bound, we could also have proved that for $n \leq L(p)$,
\begin{equation}
\sum_{x \in S_n} \PPP \big( x \leadsto^{4,\sigma_4} \partial S_n \big) \asymp n^2 \pi_4(n).
\end{equation}
Basically, it comes from the fact that when we get closer to $\partial S_N$, one of the arms is shorter, but the remaining arms have less space - the 3-arm exponent in the half-plane appears.
\end{remark}

All the results we have seen so far hold for any fixed value of $\epsilon$ in $(0,1/2)$, in particular the last Proposition. Combining it with the estimate for $4$ arms, we get an important corollary, that the behavior of $L_{\epsilon}$ does not depend on the value of $\epsilon$.

\begin{corollary} \label{lengths}
For any $\epsilon, \epsilon' \in (0,1/2)$,
\begin{equation}
L_{\epsilon}(p) \asymp L_{\epsilon'}(p).
\end{equation}
\end{corollary}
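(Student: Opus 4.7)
The plan is to apply Proposition \ref{L_exp} to both $\epsilon$ and $\epsilon'$ and extract from the two identities a ratio comparison between $L_\epsilon(p)$ and $L_{\epsilon'}(p)$. Without loss of generality assume $\epsilon \leq \epsilon'$; since $n \mapsto \PP_p(\mathcal{C}_H([0,n]\times[0,n]))$ is monotone in $n$, the very definition of $L_\epsilon$ forces $L_\epsilon(p) \geq L_{\epsilon'}(p)$, so only the reverse bound $L_\epsilon(p) \lesssim L_{\epsilon'}(p)$ needs proof.

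Writing Proposition \ref{L_exp} for both parameters and dividing removes the $|p - 1/2|$ factor, leaving $L_\epsilon(p)^2 \pi_4(L_\epsilon(p)) \asymp L_{\epsilon'}(p)^2 \pi_4(L_{\epsilon'}(p))$. By the quasi-multiplicativity of $\pi_4$ (Eq.(\ref{pi2})), $\pi_4(L_\epsilon(p)) \asymp \pi_4(L_{\epsilon'}(p))\, \pi_4(L_{\epsilon'}(p) | L_\epsilon(p))$, and substituting turns the identity into
\begin{equation*}
\bigl( L_\epsilon(p)/L_{\epsilon'}(p) \bigr)^2 \, \pi_4\bigl( L_{\epsilon'}(p) \,\big|\, L_\epsilon(p) \bigr) \asymp 1.
\end{equation*}

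The one remaining ingredient -- and the main obstacle -- is a universal lower bound of the shape $\pi_4(n | N) \gtrsim (n/N)^{\beta}$ for some $\beta < 2$; plugging such a bound into the last display would immediately give $(L_\epsilon(p)/L_{\epsilon'}(p))^{2-\beta} \lesssim 1$ and conclude. I would establish this without appealing to any $SLE_6$-derived exponent, using only the universal $5$-arm estimate of Theorem \ref{armuniv}: combined with quasi-multiplicativity, $\pi_5(N) \asymp N^{-2}$ upgrades to $\pi_5(n | N) \asymp (n/N)^2$. Applying the Reimer-type inequality Eq.(\ref{csq_Reimer}) to the decomposition $\sigma_5 = \sigma_4 \cdot B$ gives $\pi_5(n | N) \leq \pi_4(n | N)\, \pi_1(n | N)$, hence $\pi_4(n | N) \gtrsim \pi_5(n | N)/\pi_1(n | N)$. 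The RSW-type upper bound $\pi_1(n | N) \leq C'(n/N)^{\alpha'}$ from Eq.(\ref{pi1}), valid with some universal $\alpha' > 0$, then delivers
\begin{equation*}
\pi_4(n | N) \gtrsim (n/N)^{2 - \alpha'},
\end{equation*}
with exponent strictly less than $2$, completing the argument.
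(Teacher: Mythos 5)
Your proposal is correct and follows essentially the same route as the paper: apply Proposition \ref{L_exp} for both $\epsilon$ and $\epsilon'$, take the ratio to eliminate $|p-1/2|$, use quasi-multiplicativity of $\pi_4$, and then lower-bound $\pi_4(n|N)$ by combining the universal five-arm estimate with the Reimer decomposition $\pi_5 \leq \pi_4\pi_1$ and the a-priori upper bound $\pi_1(n|N) \leq C'(n/N)^{\alpha'}$. The only difference is that you spell out the Reimer step explicitly where the paper leaves it implicit.
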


\begin{proof}
To fix ideas, assume that $\epsilon \leq \epsilon'$, so that $L_{\epsilon}(p) \geq L_{\epsilon'}(p)$, and we need to prove that $L_{\epsilon}(p) \leq C L_{\epsilon'}(p)$ for some constant $C$. We know that
$$|p-1/2| \big(L_{\epsilon}(p)\big)^2 \pi_4(L_{\epsilon}(p)) \asymp 1 \asymp |p-1/2| \big(L_{\epsilon'}(p)\big)^2 \pi_4(L_{\epsilon'}(p)),$$
hence for some constant $C_1$,
$$\frac{\big(L_{\epsilon}(p)\big)^2 \pi_4(L_{\epsilon}(p))}{\big(L_{\epsilon'}(p)\big)^2 \pi_4(L_{\epsilon'}(p))} \leq C_1.$$
This yields
$$\bigg( \frac{L_{\epsilon}(p)}{L_{\epsilon'}(p)} \bigg)^2 \leq C_1 \frac{\pi_4(L_{\epsilon'}(p))}{\pi_4(L_{\epsilon}(p))} \leq C_2 \big( \pi_4(L_{\epsilon'}(p),L_{\epsilon}(p)) \big)^{-1}$$
by quasi-multiplicativity. Now we use the a-priori bound for 4 arms given by the 5-arm exponent:
$$\pi_4(L_{\epsilon'}(p),L_{\epsilon}(p)) \geq \bigg( \frac{L_{\epsilon'}(p)}{L_{\epsilon}(p)} \bigg)^{-\alpha'} \pi_5(L_{\epsilon'}(p),L_{\epsilon}(p)) \geq C_3 \bigg( \frac{L_{\epsilon'}(p)}{L_{\epsilon}(p)} \bigg)^{2-\alpha'}.$$
Together with the previous equation, it implies the result: $$L_{\epsilon}(p) \leq (C_4)^{1/\alpha'} L_{\epsilon'}(p).$$
\end{proof}

\begin{remark}
In the other direction, a RSW construction shows that we can increase $L_{\epsilon}$ by any constant factor by choosing $\epsilon$ small enough.
\end{remark}

\subsection{Uniform exponential decay, critical exponent for $\theta$\label{expsection}}

Up to now, our reasonings (separation of arms, arm events in near-critical percolation, critical exponent for $L$) were based on RSW considerations on scales $n \leq L(p)$, so that critical and near-critical percolation could be handled simultaneously. In the other direction, the definition of $L(p)$ also implies that when $n > L(p)$, the picture starts to look like super/sub-critical percolation, supporting the choice of $L_(p)$ as the characteristic scale of the model.

More precisely, we prove a property of exponential decay uniform in $p$. This property will then be used to link $L$ with the other characteristic functions, and we will derive the following expressions of $\theta$, $\chi$ and $\xi$ as functions of $L$:
\begin{enumerate}[(i)]
\item $\theta(p) \asymp \pi_1(L(p))$,

\item $\chi(p) \asymp L(p)^2 \pi_1^2(L(p))$,

\item $\xi(p) \asymp L(p)$.
\end{enumerate}
The critical exponents for these three functions will follow readily, since we already know the exponent for $L$.

\subsubsection*{Uniform exponential decay}

The following lemma shows that correlations decay exponentially fast with respect to $L(p)$. It allows to control the speed for $p$ varying:
\begin{lemma}
\label{explem}
For any $\epsilon \in (0,1/2)$, there exist constants $C_i=C_i(\epsilon)>0$ such that for all $p < 1/2$, all $n$,
\begin{equation}
\label{expdecay}
\PP_p(\mathcal{C}_H([0,n] \times [0,n])) \leq C_1 e^{- C_2 n/L_{\epsilon}(p)}.
\end{equation}
\end{lemma}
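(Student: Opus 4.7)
By duality it is enough to prove the complementary bound $\PP_p(\mathcal{C}^*_V([0,n]^2))\geq 1-C_1 e^{-C_2 n/L}$. The idea is first to get exponential decay of horizontal black crossings in a long flat strip by combining the refined RSW of Theorem~\ref{thm_RSW} with a BK decomposition, and then to lift that estimate to the full square by an FKG plus planar-topology gluing.

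\emph{Step 1 (Reduction via Corollary~\ref{lengths}).} Since $L_\epsilon \asymp L_{\epsilon'}$ for any two $\epsilon,\epsilon'\in(0,1/2)$, it suffices to prove the lemma for one small value $\epsilon^*$, chosen so that Theorem~\ref{thm_RSW} applied to $1-\epsilon^*$ produces a crossing probability as close to $1$ as needed in the sequel.

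\emph{Step 2 (Refined RSW at scale $L$).} Setting $L=L_{\epsilon^*}(p)$, duality and the definition of $L$ give $\PP_p(\mathcal{C}^*_V([0,L]\times[0,L]))\geq 1-\epsilon^*$; the refined RSW (namely $f_2(1-\epsilon)\to 1$ as $\epsilon\to 0$, as highlighted at the end of Section~3.2) yields
\begin{equation*}
\PP_p(\mathcal{C}^*_V([0,2L]\times[0,L]))\geq 1-\epsilon_1^*,
\end{equation*}
with $\epsilon_1^*$ arbitrarily small when $\epsilon^*$ is small; equivalently $\PP_p(\mathcal{C}_H([0,2L]\times[0,L]))\leq\epsilon_1^*$.

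\emph{Step 3 (BK in a flat wide strip).} A horizontal black crossing of $[0,n]\times[0,L]$ induces, between its successive intersections of the vertical lines $x=2iL$, disjoint horizontal crossings of the $2L\times L$ sub-rectangles $[2iL,2(i+1)L]\times[0,L]$; as these sub-rectangles have disjoint interiors, BK (or plain independence, the boundary sites being free by our convention on extremities) gives
\begin{equation*}
\PP_p(\mathcal{C}_H([0,n]\times[0,L]))\leq(\epsilon_1^*)^{\lfloor n/2L\rfloor}\leq C_1 e^{-C_2 n/L},
\end{equation*}
and dually $\PP_p(\mathcal{C}^*_V([0,n]\times[0,L]))\geq 1-C_1 e^{-C_2 n/L}$.

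\emph{Step 4 (From flat strips to the full square).} Stack $m\asymp n/L$ horizontal strips $S_j=[0,n]\times[jL,(j+1)L]$ in each of which the vertical white crossing $V_j:=\mathcal{C}^*_V(S_j)$ has probability $\geq 1-C_1 e^{-C_2 n/L}$ by Step~3; to link them across the internal horizontal lines $y=jL$, run the same BK argument of Step~3 in the orthogonal direction (using the reflection symmetry of the triangular lattice) to obtain, with the same type of exponential bound, high-probability white connectors $H_j$ of appropriate $O(L)$-thick regions around $y=jL$. All the events $V_j,H_j$ are decreasing in the black configuration, so Harris--FKG gives
\begin{equation*}
\PP_p\bigg(\bigcap_j V_j\cap\bigcap_j H_j\bigg)\geq\prod_j\PP_p(V_j)\prod_j\PP_p(H_j)\geq 1-C_3(n/L)\,e^{-C_2 n/L}.
\end{equation*}
On this event, a classical planar-topology fact (a horizontal and a vertical white crossing of a common rectangle must share a white site) merges the $V_j$'s and $H_j$'s into a single white cluster spanning $[0,n]^2$ from bottom to top; the polynomial prefactor $m\asymp n/L$ is absorbed into a slightly smaller exponential constant, and duality concludes.

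\emph{Main obstacle.} The whole argument rests on Step~2: only the \emph{refined} form of RSW, producing $\epsilon_1^*$ arbitrarily small starting from a single crossing estimate $\leq\epsilon^*$, allows BK in Step~3 to give genuine exponential decay with rate $1/L(p)$ (rather than some weaker geometric rate) -- which is exactly the freedom granted by Corollary~\ref{lengths} and is the reason the refined RSW was singled out earlier as being useful only here. The geometric point of Step~4 that requires care is to design the connecting regions $H_j$ so that they too enjoy the same $n/L$-exponential estimate; once this is set up, the Harris--FKG combination and the planar topological gluing are entirely standard.
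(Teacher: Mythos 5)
Your Steps~1--3 are correct and in fact give a cleaner way than the paper's to see exponential decay for a \emph{flat} $n\times L$ strip: once the refined RSW of Theorem~\ref{thm_RSW} (together with Corollary~\ref{lengths} to free the choice of $\epsilon$) makes $\PP_p(\mathcal{C}_H([0,2L]\times[0,L]))\leq\epsilon_1^*$ small, the decomposition into $\lfloor n/2L\rfloor$ independent $2L\times L$ blocks immediately gives $\PP_p(\mathcal{C}_H([0,n]\times[0,L]))\leq(\epsilon_1^*)^{\lfloor n/2L\rfloor}$. But this is a statement about the strip, not the square, and Step~4 --- the lifting to $[0,n]^2$ --- contains a genuine gap.

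The problem is that the ``high-probability white connectors $H_j$'' you need do not exist. To merge consecutive vertical white crossings $V_j$ of $S_j$ and $V_{j+1}$ of $S_{j+1}$ by the planar-topology fact you invoke (a horizontal and a vertical white crossing of a common rectangle must meet), $H_j$ must be a white \emph{horizontal} crossing of some $n\times O(L)$ strip straddling the seam $y=(j+1)L$. For white sites with $p<1/2$, such a crossing traverses the \emph{hard} direction of a box whose aspect ratio is $\asymp n/L\gg 1$; by duality its complement is a black \emph{easy} (height-$O(L)$) crossing of a very wide strip, which RSW shows occurs with probability $\to 1$ as $n/L\to\infty$. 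So $\PP_p(H_j)$ in fact \emph{tends to $0$}, not to $1$, and is certainly not $\geq 1-e^{-cn/L}$. Running your Step-3 BK argument ``in the orthogonal direction'' bounds black hard crossings of tall $L\times n$ strips, whose duals are white \emph{easy} crossings of those tall strips --- i.e.\ short, $O(L)$-long white segments at unpredictable heights, which do not connect the $V_j$'s either. Without a valid $H_j$, the events $\bigcap_j V_j$ alone do not produce a white top-to-bottom crossing of $[0,n]^2$: the crossings in different strips may live at different $x$-locations and never meet.

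The paper sidesteps this entirely by proving the bound for the square directly, via Kesten's doubling recursion: one shows
\begin{equation*}
\PP_p\big(\mathcal{C}_H([0,2n]\times[0,4n])\big)\leq C'\,\big[\PP_p\big(\mathcal{C}_H([0,n]\times[0,2n])\big)\big]^2
\end{equation*}
by extracting two disjoint easy sub-crossings from a horizontal crossing of the $2n\times 4n$ parallelogram and using BK. Iterating from scale $L$, where refined RSW makes $\PP_p(\mathcal{C}_H([0,L]\times[0,2L]))\leq\epsilon_1$ with $C'\epsilon_1<1$, yields doubly-exponential decay $(C'\epsilon_1)^{2^k}$ in the number of doublings $k\asymp\log_2(n/L)$, hence $e^{-Cn/L}$ directly for $1\times 2$-aspect parallelograms and therefore for $[0,n]^2$. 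No stacking or gluing step is needed. To repair your proof you would essentially have to replace Step~4 by this recursion (or, equivalently, establish Eq.~(\ref{explem2}) first --- but that relies on the present lemma, so as written your route is circular).
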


\begin{proof}
We use a block argument: for each integer $n$,
\begin{equation}
\PP_p(\mathcal{C}_H([0,2n] \times [0,4n])) \leq C' [\PP_p(\mathcal{C}_H([0,n] \times [0,2n]))]^2,
\end{equation}
with $C' = 10^2$ some universal constant.

It suffices for that (see Figure \ref{fig_exp}) to divide the parallelogram $[0,2n] \times [0,4n]$ into 4 horizontal sub-parallelograms $[0,2n] \times [i n, (i+1) n]$ ($i=0, \ldots ,3$) and 6 vertical ones $[i n,(i+1) n] \times [j n,(j+2) n]$ ($i=0, 1 , j=0, 1 ,2$). Indeed, consider a horizontal crossing of the big parallelogram: by considering its pieces in the two regions $0 < x < n$ and $n < x < 2n$, we can extract from it two sub-paths crossing one of the sub-parallelograms ``in the easy way''. They are disjoint by construction, so the claim follows by using the BK inequality.

\begin{figure}
\begin{center}
\includegraphics[width=5cm]{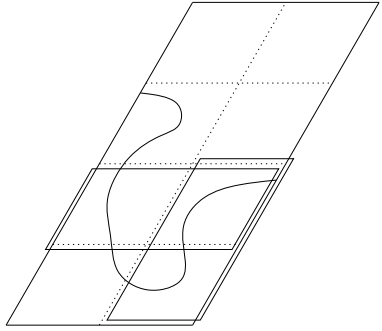}
\caption{Two of the small sub-parallelograms are crossed in the ``easy'' way.}
\label{fig_exp}
\end{center}
\end{figure}

We then obtain by iterating:
\begin{equation}
C' \PP_p(\mathcal{C}_H([0,2^k L(p)] \times [0,2^{k+1} L(p)])) \leq (C' \epsilon_1)^{2^k}
\end{equation}
as soon as $\epsilon_1 \geq \PP_p(\mathcal{C}_H([0,L(p)] \times [0,2 L(p)]))$.

Recall that by definition, $\PP_p(\mathcal{C}_H([0,L(p)] \times [0,L(p)])) \leq \epsilon_0$ if $\epsilon \leq \epsilon_0$. The RSW theory thus entails (Theorem \ref{thm_RSW}) that for all fixed $\epsilon_1 > 0$, we can take $\epsilon_0$ sufficiently small to get automatically (and independently of $p$) that
\begin{equation}
\PP_p(\mathcal{C}_H([0,L(p)] \times [0,2 L(p)])) \leq \epsilon_1.
\end{equation}

We now choose $\epsilon_1 = 1/(e^2 C')$. For each integer $n \geq L(p)$, we can define $k=k(n)$ such that $2^k \leq n/L(p) < 2^{k+1}$, and then,
\begin{align*}
\PP_p(\mathcal{C}_H([0,n] \times [0,n])) & \leq \PP_p(\mathcal{C}_H([0,2^k L(p)] \times [0,2^{k+1} L(p)]))\\
& \leq e^{-2^{k+1}}\\
& \leq e \times e^{-n/L(p)},
\end{align*}
which is also valid for $n < L(p)$, thanks to the extra factor $e$.

Hence, we have proved the property for any $\epsilon$ below some fixed value $\epsilon_0$ (given by RSW). The result for any $\epsilon \in (0,1/2)$ follows readily by using the equivalence of lengths for different values of $\epsilon$ (Corollary \ref{lengths}).
\end{proof}

We would like to stress the fact that we have not used any of the previous results until the last step, this exponential decay property could thus have been derived much earlier -- but only for values of $\epsilon$ small enough. It would for instance provide a more direct way to prove that
$$L_{\epsilon}(p) \asymp L_{\epsilon'}(p),$$
but still only for $\epsilon$, $\epsilon'$ less than some fixed value.

\begin{remark}
It will sometimes reveal more useful to know this property for crossings of longer parallelograms ``in the easy way'': we also have for any $k \geq 1$,
\begin{equation}
\label{explem2}
\PP_p(\mathcal{C}_H([0,n] \times [0,k n])) \leq C_1^{(k)} e^{- C_2^{(k)} n/L(p)}
\end{equation}
for some constants $C_i^{(k)}$ (depending on $k$ and $\epsilon$). This can be proved by combining the previous lemma with the fact that in Theorem \ref{thm_RSW}, we can take $f_k$ satisfying $f_k(1-\epsilon)=1 - C_k \epsilon^{\alpha_k} + o(\epsilon^{\alpha_k})$ for some $C_k,\alpha_k>0$.
\end{remark}

\subsubsection*{Consequence for $\theta$}

When $p>1/2$, we now show that at distance $L(p)$ from the origin, we are already ``not too far from infinity'': once we have reached this distance, there is a positive probability (bounded away from $0$ uniformly in $p$) to reach infinity.

\begin{corollary} \label{notfar}
We have
\begin{equation} \label{theta1}
\theta(p) = \PP_p\big( 0 \leadsto \infty \big) \asymp \PP_p\big( 0 \leadsto \partial S_{L(p)} \big)
\end{equation}
uniformly in $p>1/2$.
\end{corollary}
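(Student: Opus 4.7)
The upper bound $\theta(p) \leq \PP_p(0 \leadsto \partial S_{L(p)})$ is immediate from the inclusion $\{0 \leadsto \infty\} \subseteq \{0 \leadsto \partial S_{L(p)}\}$. For the matching lower bound, the plan is to control the complementary event $\{0 \leadsto \partial S_{L(p)}, |C(0)| < \infty\}$ by exploiting the uniform exponential decay just proved in Lemma~\ref{explem}. The key observation is that on this bad event, the set $D := C(0) \cup S_{L(p)-1}$ is finite and connected, so its outer boundary in $\mathbb{Z}^2 \setminus D$ is a white circuit $\omega$ around $0$ all of whose sites lie in the exterior region $\{\|x\|_\infty \geq L(p)\}$.

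Now the black path witnessing $0 \leadsto \partial S_{L(p)}$ and the white circuit $\omega$ use sites of opposite colors, so they occur disjointly in Reimer's sense, which gives
\begin{equation*}
\PP_p\big(0 \leadsto \partial S_{L(p)},\, |C(0)| < \infty\big) \;\leq\; \PP_p\big(0 \leadsto \partial S_{L(p)}\big) \cdot \PP_p(\mathcal{W}),
\end{equation*}
where $\mathcal{W}$ is the event that some white circuit around $0$ exists in $\{\|x\|_\infty \geq L(p)\}$. It then suffices to show $\PP_p(\mathcal{W}) \leq 1 - c$ uniformly in $p > 1/2$. To this end I would apply Lemma~\ref{explem} (together with its rectangular strengthening in Eq.(\ref{explem2})) to the white color via the symmetry $p \leftrightarrow 1-p$: a white angular crossing of the annulus $S_{2^k L(p), 2^{k+1} L(p)}$ amounts to a white crossing of a long parallelogram of short side of order $2^k L(p)$, hence has probability at most $C e^{-c \cdot 2^k}$. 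Localizing any white circuit by the dyadic scale containing its minimum radius and union-bounding yields $\PP_p(\mathcal{W}) \leq \sum_{k \geq 0} C e^{-c \cdot 2^k}$, a convergent series whose tail can be made as small as desired by working instead at a dilated scale $M L(p)$ for a fixed large constant $M$; by Corollary~\ref{lengths} this rescaling is harmless, and the resulting loss is absorbed by an elementary supercritical extendability estimate $\PP_p(0 \leadsto \partial S_{M L(p)}) \geq c_M \, \PP_p(0 \leadsto \partial S_{L(p)})$, itself obtained by iterating RSW-type arguments $O(\log M)$ times, each step succeeding with uniformly positive probability thanks to Lemma~\ref{explem}.

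The main obstacle I anticipate is that a white circuit surrounding $0$ can span many dyadic annuli in radius, so it cannot be naively localized to a single $S_{2^k L(p), 2^{k+1} L(p)}$. The cleanest way around this is to isolate the scale $k^*$ at which the circuit attains its minimum radius: either the circuit is entirely contained in the corresponding annulus (giving directly a white angular crossing), or part of it escapes outward and later returns, in which case the returning arc is a white path within a long thin curvilinear rectangle whose crossing probability is governed by Eq.(\ref{explem2}). Once this combinatorial localization is in place, the proof reduces to the Reimer inequality plus the uniform exponential decay, both already at our disposal.
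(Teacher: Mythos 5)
There is a genuine gap in your localization of the white circuit. You claim that on the event $\{0 \leadsto \partial S_{L(p)},\ |C(0)|<\infty\}$, the outer boundary of $D := C(0)\cup S_{L(p)-1}$ is a white circuit contained in $\{\|x\|_\infty \geq L(p)\}$. This is false: a site $x$ with $\|x\|_\infty = L(p)$ adjacent to $S_{L(p)-1}$ but not adjacent to any site of $C(0)$ can perfectly well be black (it simply fails to be in $C(0)$ because a white site separates it from $0$), and such an $x$ can lie on the outer boundary of $D$. A concrete example: take $C(0)$ to be a single thin black path from $0$ to $\partial S_{L(p)}$, make all of its neighbours white, and make every site outside $S_{L(p)}$ black — then $0 \leadsto \partial S_{L(p)}$ and $|C(0)|<\infty$, yet there is \emph{no} white circuit around $0$ in $\{\|x\|_\infty\geq L(p)\}$, and the only white circuit around $C(0)$ hugs the path and lies almost entirely deep inside $S_{L(p)}$. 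The correct inclusion is only $\{0 \leadsto \partial S_{L(p)},\ |C(0)|<\infty\}\subseteq\{\exists\ \text{white circuit surrounding $0$ and some site of $\partial S_{L(p)}$}\}$, with no lower bound on the circuit's inradius; Reimer still applies since the witnesses have disjoint colors, but your dyadic union bound (which tacitly assumes the circuit stays outside $S_{L(p)}$) no longer gives $\PP_p(\mathcal{W})\leq 1-c$, because the circuit can dip arbitrarily close to the origin.

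Once you repair the event to the correct $\mathcal{W}' = \{\exists\ \text{white circuit surrounding $0$ and a site of $\partial S_{L(p)}$}\}$, the natural way to bound $\PP_p(\mathcal{W}')\leq 1-c$ is via the dual inclusion $\mathcal{W}'^c\supseteq\{\partial S_{L(p)}\leadsto\infty\}$, and then to prove $\PP_p(\partial S_{L(p)}\leadsto\infty)\geq c$ by building an infinite chain of overlapping parallelograms whose $k$-th member has crossing probability $\geq 1-C_1 e^{-C_2 2^k}$ (by the uniform exponential decay) and invoking FKG so that the infinite product $\prod_k(1-C_1 e^{-C_2 2^k})>0$ survives. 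But at that point the Reimer step is a detour: since both $\{0\leadsto\partial S_{L(p)}\}$ and $\{\partial S_{L(p)}\leadsto\infty\}$ are increasing, one can write directly
\begin{equation*}
\theta(p)\ \geq\ \PP_p\big(\{0\leadsto\partial S_{L(p)}\}\cap\{\partial S_{L(p)}\leadsto\infty\}\big)\ \geq\ \PP_p\big(0\leadsto\partial S_{L(p)}\big)\,\PP_p\big(\partial S_{L(p)}\leadsto\infty\big)
\end{equation*}
by FKG, and this is precisely the paper's argument. So the fix not only plugs the gap but collapses your proof into the paper's shorter one. Your remaining observations (the need to dilate to $ML(p)$ so the leading term of the geometric series is $<1$, and the supercritical extendability bound) are sound in isolation, but they are addressing a difficulty that disappears once the FKG route is taken.
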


\begin{proof}
It suffices to consider overlapping parallelograms like in Figure \ref{theta}, each parallelogram twice larger than the previous one, so that the $k^\textrm{th}$ of them has a probability at least $1 - C_1 e^{-C_2 2^k}$ to present a crossing in the ``hard'' direction (thanks to the previous remark). Since $\prod_k (1 - C_1 e^{-C_2 2^k})> 0$, we are done.
\end{proof}

\begin{figure}
\begin{center}
\includegraphics[width=10cm]{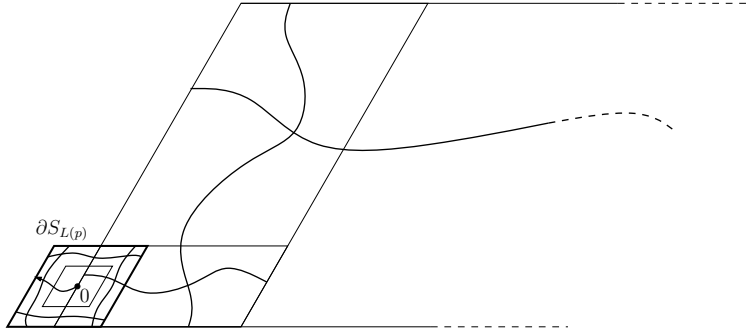}
\caption{We consider overlapping parallelograms, with size doubling at each step.}
\label{theta}
\end{center}
\end{figure}

Now, combining Eq.(\ref{theta1}) with Theorem \ref{armnear} gives (for $p>1/2$)
\begin{equation}
\theta(p) \asymp \PP_p\big( 0 \leadsto \partial S_{L(p)} \big) \asymp \PP_{1/2}\big( 0 \leadsto \partial S_{L(p)} \big) = \pi_1(L(p)).
\end{equation}
Using the $1$-arm exponent $\alpha_1 = 5/48$ stated in Theorem \ref{armcrit}, we get
\begin{equation}
\theta(p) \approx \big(L(p)\big)^{-5/48}
\end{equation}
as $p \to 1/2^+$. Together with the critical exponent for $L$ derived previously, this provides the critical exponent for $\theta$:
\begin{equation}
\theta(p) \approx \big((p - 1/2)^{-4/3}\big)^{-5/48} \approx (p - 1/2)^{5/36}.
\end{equation}

\subsubsection*{Equivalence of $L$ and $\tilde{\xi}$}

In the other direction, performing a RSW-type construction like in Figure \ref{RSW_construction} yields
\begin{equation}
\PP_p(\mathcal{C}_H([0,k L(p)] \times [0,L(p)])) \geq \delta_2^{k-1} \delta_1^{k-2} = C_1 e^{- C_2 k L(p)/ L(p)}
\end{equation}
so that $L(p)$ is the \emph{exact} speed of decaying. Once knowing this, it is easy to compare $L$ and $\tilde{\xi}$.

\begin{corollary}
For any fixed $\epsilon \in (0,1/2)$,
\begin{equation}
\tilde{\xi}(p) \asymp L_{\epsilon}(p).
\end{equation}
\end{corollary}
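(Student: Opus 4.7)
The plan is to establish the two-sided comparison $\tilde{\xi}(p) \asymp L_\epsilon(p)$ by proving each inequality separately. Write $L = L_\epsilon(p)$. By the symmetry $p \leftrightarrow 1-p$ that preserves both quantities I restrict to $p < 1/2$ (the case $p = 1/2$ is vacuous, both sides being $+\infty$).

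For the upper bound $\tilde{\xi}(p) \leq C L$, the target is a uniform estimate $\tau_n \leq C_1 e^{-C_2 n/L}$, since then the convergence $-\log \tau_n / n \to 1/\tilde{\xi}(p)$ yields $1/\tilde{\xi}(p) \geq C_2/L$. Trivially $\tau_n \leq \PP_p(0 \leadsto \partial S_n)$. I would then prove $\PP_p(0 \leadsto \partial S_n) \leq 4 \PP_p(\mathcal{C}_H([0,n] \times [-n,n]))$ by a path-surgery argument: truncating a path realizing $0 \leadsto \partial S_n$ at its first hit of $\partial S_n$ shows that we may assume it stays in $S_n$; by symmetry, at the cost of a factor $4$ we may assume its endpoint lies on the right side $\{n\} \times [-n,n]$; and cutting this path at the \emph{last} visit to the line $\{x = 0\}$ produces, since the first oblique coordinate changes by at most $1$ at each step and cannot jump over $0$, a subpath confined to $[0,n] \times [-n,n]$ that realizes a left-right crossing of this $n \times 2n$ parallelogram. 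The uniform exponential decay Eq.(\ref{explem2}) applied with $k = 2$ then concludes.

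For the lower bound $\tilde{\xi}(p) \geq c L$, I would invoke the RSW-type construction recalled just before the corollary, which gives $\PP_p(\mathcal{C}_H([0, kL] \times [0, L])) \geq C_3 e^{-C_4 k}$ for every integer $k \geq 1$. The key step is to convert this crossing bound into a two-point bound: any horizontal crossing has endpoints $(0, y)$ and $(kL, y')$ with $y, y' \in \{0, \ldots, L\}$, so by a union bound over the at most $(L+1)^2$ such pairs, some specific $(y, y')$ satisfies $\PP_p((0, y) \leadsto (kL, y')) \geq C_3 e^{-C_4 k}/(L+1)^2$. Translation invariance together with $\|(kL, y'-y)\|_\infty = kL$ (valid for $k \geq 1$ since $|y'-y| \leq L$) then gives $\tau_{kL} \geq C_3 e^{-C_4 k}/(L+1)^2$. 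Taking logarithms, dividing by $kL$, and letting $k \to \infty$ with $L$ \emph{fixed}, the polynomial correction $2 \log(L+1)/(kL)$ vanishes, yielding $1/\tilde{\xi}(p) \leq C_4/L$.

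The main subtlety is ensuring that the union-bound loss $(L+1)^2$ does not destroy the linear rate. This succeeds because the possible crossing endpoints lie on the two short sides of the $kL \times L$ parallelogram, each of length $L$, so the loss is polynomial in $L$ alone rather than in $n = kL$; sending $k \to \infty$ with $L$ held fixed absorbs it into a $\log(L)/k$ term that tends to $0$. The upper bound is by contrast a direct corollary of the uniform exponential decay already established in this section. This dual use of the scale $L(p)$ -- upper-bounding $\tau_n$ via exponential decay, lower-bounding it via RSW -- is exactly what yields $\tilde{\xi}(p) \asymp L(p)$, in accordance with the authors' remark preceding the corollary that once the exact rate of decay is known, the comparison is easy.
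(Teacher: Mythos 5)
Your proof is correct and follows essentially the same route as the paper's: upper bound by reducing two-point connection probabilities to crossings of $n\times 2n$ parallelograms and invoking the uniform exponential decay Eq.(\ref{explem2}), lower bound by combining the RSW estimate $\PP_p(\mathcal{C}_H([0,kL]\times[0,L]))\geq C e^{-C'k}$ with a union bound over endpoint pairs, then passing to the $k\to\infty$ limit on both sides. The only cosmetic differences are that you state the path-surgery step (truncation, factor $4$, last visit to $\{a=0\}$) explicitly where the paper leaves it implicit, and you union-bound over the $(L+1)^2$ endpoint pairs of a $kL\times L$ parallelogram rather than the paper's $(kL)^2$ pairs in a $kL\times kL$ rhombus; both polynomial corrections vanish after dividing by $kL$ and letting $k\to\infty$ with $L$ fixed.
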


\begin{proof}
We exploit the previous remark: on one hand $L$ is the exact speed of decaying for crossings of rhombi, and on the other hand $\tilde{\xi}$ was defined to give the optimal bound for point-to-point connections.

More precisely, for any $x \in \partial S_n$ we have
\begin{align*}
\tau_{0,x} = \PP_p(0 \leadsto x) & \leq \PP_p(\mathcal{C}_H([0,n]\times[0,2n]))\\
& \leq C_1^{(2)} e^{- C_2^{(2)} n / L(p)}
\end{align*}
so that $\tau_{k L(p)} \leq C_1^{(2)} e^{- C_2^{(2)} k}$ and
$$- \frac{\log \tau_{k L(p)}}{k L(p)} \geq - \frac{1}{k L(p)} \big(\log C_1^{(2)} - C_2^{(2)} k\big) \xrightarrow[k \to \infty]{} \frac{C_2^{(2)}}{L(p)}.$$
Hence,
$$\frac{1}{\tilde{\xi}(p)} \geq \frac{C_2^{(2)}}{L(p)}$$
and finally $\tilde{\xi}(p) \leq C L(p)$.

Conversely, we know that $\PP_p(\mathcal{C}_H([0,k L(p)] \times [0,k L(p)])) \geq \tilde{C}_1 e^{-\tilde{C}_2 k}$ for some $\tilde{C}_i>0$. Consequently,
$$\tau_{k L(p)} \geq \frac{1}{(k L(p))^2} \PP_p(\mathcal{C}_H([0,k L(p)] \times [0,k L(p)])) \geq \frac{1}{(k L(p))^2} \tilde{C}_1 e^{-\tilde{C}_2 k},$$
which implies
$$- \frac{\log \tau_{k L(p)}}{k L(p)} \leq - \frac{1}{k L(p)} \big(\log \tilde{C}_1 - 2 \log (k L(p)) - \tilde{C}_2 k\big) \xrightarrow[k \to \infty]{} \frac{\tilde{C}_2}{L(p)},$$
whence the conclusion: $\tilde{\xi}(p) \geq C' L(p)$.
\end{proof}

\subsection{Further estimates, critical exponents for $\chi$ and $\xi$\label{xi}}

\subsubsection*{Estimates from critical percolation}

We start by stating some estimates that we will need. These estimates were originally derived for critical percolation (see e.g. \cite{Ke2,Ke3}), but for exactly the same reasons they also hold for near-critical percolation on scales $n \leq L(p)$:

\begin{lemma}
Uniformly in $p$, $\PPP$ between $\PP_p$ and $\PP_{1-p}$ and $n \leq L(p)$, we have
\begin{enumerate}[1.]
\item $\EEE \big[|x \in S_n : x \leadsto \partial S_n| \big] \asymp n^2 \pi_1(n)$. \label{it2}

\item For any $t \geq 0$,
$$\sum_{x \in S_n} \|x\|_{\infty}^t \PPP\big(0 \leadsto x\big) \asymp \sum_{x \in S_n} \|x\|_{\infty}^t \PPP\big(0 \leadsto^{S_n} x\big) \asymp n^{t+2} \pi_1^2(n).$$ \label{it1}
\end{enumerate}
\end{lemma}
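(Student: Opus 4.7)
Both items are moments of the percolation cluster at the origin and reduce, via quasi-multiplicativity from Section \ref{sec_separation}, to dyadic sums involving $\pi_1$. The key building block is the two-point estimate
\begin{equation*}
\PPP(0 \leadsto x) \asymp \PPP(0 \leadsto^{S_n} x) \asymp \pi_1(\|x\|_\infty)^2, \qquad 1 \leq \|x\|_\infty \leq n,
\end{equation*}
which I would establish first. Its upper bound follows from BK/Reimer: any self-avoiding black path from $0$ to $x$ must exit $S_{\|x\|_\infty/3}(0)$ and then enter $S_{\|x\|_\infty/3}(x)$, so the one-arms from $0$ and from $x$ to distance $\|x\|_\infty/3$ occur disjointly, and Theorem \ref{armnear} (applied to $\PPP$ and to its translate by $x$) turns their product into $\pi_1(\|x\|_\infty)^2$. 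The matching lower bound is a standard FKG/RSW gluing: ask for a black one-arm from $0$ to $\partial S_{\|x\|_\infty/4}(0)$, a black one-arm from $x$ to $\partial S_{\|x\|_\infty/4}(x)$, and a connecting black path in a surrounding parallelogram of diameter $\asymp \|x\|_\infty$, whose probability is bounded below universally by RSW. Since the whole construction lies inside $S_n$ whenever $\|x\|_\infty \leq n$, the same lower bound passes to $\PPP(0 \leadsto^{S_n} x)$, while the corresponding upper bound is trivial.

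Summing the two-point estimate over dyadic shells $\|x\|_\infty \asymp 2^k$ gives
\begin{equation*}
\sum_{x \in S_n} \|x\|_\infty^t \PPP(0 \leadsto x) \asymp \sum_{k=0}^{\lfloor \log_2 n\rfloor} 2^{k(t+2)} \pi_1(2^k)^2,
\end{equation*}
and quasi-multiplicativity $\pi_1(2^k)^2 \asymp \pi_1(n)^2/\pi_1(2^k|n)^2$ combined with an a-priori bound $\pi_1(r|N) \geq c(r/N)^{1/2}$ (discussed below) makes each summand at most $(2^k/n)^{t+1}$ times $n^{t+2}\pi_1(n)^2$. The resulting geometric series is dominated by its top term and yields the announced upper bound in item 2; the matching lower bound comes from restricting the sum to $x$ with $\|x\|_\infty \in [n/4, n/2]$.

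For item 1, write $\EEE[|\{x \in S_n : x \leadsto \partial S_n\}|] = \sum_x \PPP(x \leadsto \partial S_n)$. The lower bound is obtained by keeping only $x \in S_{n/2}$, for which the extension of Theorem \ref{armnear} to non-concentric annuli gives $\PPP(x \leadsto \partial S_n) \asymp \pi_1(n)$ and there are $\asymp n^2$ such sites. For the upper bound, write $d_x := d(x,\partial S_n)$, use $\PPP(x \leadsto \partial S_n) \leq \PPP(x \leadsto \partial S_{d_x}(x)) \asymp \pi_1(d_x)$, and split $S_n$ into the layers $L_k := \{x : d_x \in [2^{k-1}, 2^k)\}$, which satisfy $|L_k| \lesssim n\cdot 2^k$; the same sub-unit bound then gives
\begin{equation*}
\sum_x \PPP(x \leadsto \partial S_n) \lesssim n \sum_{k=0}^{\lfloor \log_2 n\rfloor} 2^k \pi_1(2^k) \lesssim n^2 \pi_1(n).
\end{equation*}

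The single non-routine point in all of this is the sub-unit bound $\pi_1(r|N) \geq c(r/N)^{1/2}$, uniform in $\PPP$ for $r \leq N \leq L(p)$ --- without a sub-unit exponent, the inner dyadic scales would dominate both sums and the estimates would fail. I would obtain it from BK applied to two disjoint one-arms in the half-plane, $\pi_2^{hp}(r|N) \leq \pi_1(r|N)^2$, together with the universal half-plane two-arm estimate $\pi_2^{hp}(r|N) \asymp r/N$ from Theorem \ref{armuniv} (transferred from $\PP_{1/2}$ to $\PPP$ by the half-plane analog of Theorem \ref{armnear}). Everything else is standard bookkeeping with the arm-event machinery developed earlier in the paper.
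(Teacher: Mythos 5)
Your plan is correct, and it follows essentially the paper's own strategy: both arguments reduce the two moments to dyadic sums of one-arm probabilities via quasi-multiplicativity, use an a-priori sub-unit one-arm bound to control the inner scales, transfer everything between $\PP_{1/2}$ and general $\PPP$ with Theorem~\ref{armnear}, and obtain lower bounds from a single FKG/RSW gluing in an annulus of radius comparable to $n$. The one genuine point of departure is your derivation of the key bound $\pi_1(r|N) \geq c\,(r/N)^{1/2}$. The paper quotes the block version of van den Berg--Kesten's (3.15), an external reference; you instead combine BK, $\pi_2^{hp}(r|N) \leq \pi_1(r|N)^2$, with the universal half-plane exponent $\beta_2 = 1$ of Theorem~\ref{armuniv}. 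This is a legitimate alternative that has the advantage of staying entirely inside the machinery already developed in the paper, at the (modest) cost of requiring the annulus version of the half-plane two-arm estimate and its validity for general $\PPP$ on scales $\leq L(p)$ --- both available here since the proof of Theorem~\ref{armuniv} is RSW-based. Isolating the two-point estimate $\PPP(0\leadsto x) \asymp \PPP(0\leadsto^{S_n} x) \asymp \pi_1(\|x\|_\infty)^2$ as a separate ingredient is also a cleaner organization than the paper's inline computation (though you should restrict its lower-bound half to, say, $\|x\|_\infty \leq n/2$ so the gluing construction genuinely fits inside $S_n$; as you already only use it there, this costs nothing). Finally note that the paper's lower-bound gluing in item 2 is done with a circuit in $S_{2n/3,n}$ and one-arm events pointing outward from both $0$ and $x$, which avoids having to ``connect two arm tips'' --- this is slightly more robust than the loose phrase ``a connecting black path in a surrounding parallelogram,'' and if you write the argument out you will want a circuit-type construction rather than a single crossing.
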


Note that item \ref{it1}. implies in particular for $t=0$ that
$$\EEE \big[|x \in S_n : x \leadsto 0| \big] \asymp \EEE \big[|x \in S_n : x \leadsto^{S_n} 0| \big] \asymp n^2 \pi_1^2(n).$$

\begin{proof}
We will have use for the fact that we can take $\alpha=1/2$ for $j=1$ in Eq.(\ref{pi1}) (actually any $\alpha < 1$ would be enough for our purpose): for any integers $n < N$,
\begin{equation} \label{pi3}
\pi_1(n|N) \geq C (n/N)^{1/2}.
\end{equation}
This can be proved like (3.15) of \cite{BK}: just use blocks of size $n$ instead of individual sites to obtain that $\frac{N}{n} \pi_1^2(n|N)$ is bounded below by a constant.

\smallskip

\noindent \textbf{Proof of item 1.} We will use that
\begin{equation}
\EEE \big[|x \in S_n : x \leadsto 0| \big] = \sum_{x \in S_n} \PPP(x \leadsto \partial S_n).
\end{equation}

For the lower bound, it suffices to note that for any $x \in S_n$,
\begin{equation}
\PPP(x \leadsto \partial S_n) \geq \PPP(x \leadsto \partial S_{2n}(x)) = \PPP^x(0 \leadsto \partial S_{2n})
\end{equation}
(where $\PPP^x$ is the measure $\PPP$ translated by $x$), and that
\begin{equation}
\PPP^x(0 \leadsto \partial S_{2n}) \geq C_1 \PPP^x(0 \leadsto \partial S_n) \geq C_2 \pi_1(n)
\end{equation}
by extendability and Theorem \ref{armnear} for one arm.

For the upper bound, we sum over concentric rhombi around $0$:
\begin{align*}
\sum_{x \in S_n} \PPP(x \leadsto \partial S_n) & \leq \sum_{x \in S_n} \PPP(x \leadsto \partial S_{d(x,\partial S_n)}(x))\\
& \leq \sum_{j=1}^n C_1 n \PPP(0 \leadsto \partial S_j)
\end{align*}
using that there are at most $C_1 n$ sites at distance $j$ from $\partial S_n$. By Theorem \ref{armnear}, this last sum is at most
\begin{align*}
C_2 n \sum_{j=1}^n \pi_1(j) & \leq C_2 n \pi_1(n) \sum_{j=1}^n \frac{\pi_1(j)}{\pi_1(n)}\\
& \leq C_3 n \pi_1(n) \sum_{j=1}^n \pi_1(j|n)^{-1}
\end{align*}
by quasi-multiplicativity. Now Eq.(\ref{pi3}) says that $\pi_1(j|n) \geq (j/n)^{1/2}$, so that
$$\sum_{j=1}^n \pi_1(j|n)^{-1} \leq \sum_{j=1}^n (j/n)^{-1/2}= n^{1/2} \sum_{j=1}^n j^{-1/2} \leq C_4 n,$$
which gives the desired upper bound.

\smallskip

\noindent \textbf{Proof of item 2.}

Since
$$\sum_{x \in S_n} \|x\|_{\infty}^t \PPP\big(0 \leadsto^{S_n} x\big) \leq \sum_{x \in S_n} \|x\|_{\infty}^t \PPP\big(0 \leadsto x\big),$$
it suffices to prove the desired lower bound for the left-hand side, and the upper bound for the right-hand side.

Consider the lower bound first. We note (see Figure \ref{critical_estimate}) that if $0$ is connected to $\partial S_n$ and if there exists a black circuit in $S_{2n/3,n}$ (which occurs with probability at least $\delta_6^4$ by RSW), then any $x \in S_{n/3,2n/3}$ connected to $\partial S_{2n}(x)$ will be connected to $0$ in $S_n$. Using the FKG inequality, we thus get for such an $x$:
$$\PPP(0 \leadsto^{S_n} x) \geq \delta_6^4 \PPP(0 \leadsto \partial S_n) \PPP(x \leadsto \partial S_{2n}(x))$$
which is at least (still using extendability and Theorem \ref{armnear}) $C_1 \pi_1^2(n)$. Consequently,
\begin{align*}
\sum_{x \in S_n} \|x\|_{\infty}^t \PPP\big(0 \leadsto^{S_n} x\big) & \geq \sum_{x \in S_{n/3,2n/3}} \|x\|_{\infty}^t C_1 \pi_1^2(n)\\
& \geq C_2 n^2 (n/3)^t \pi_1^2(n).
\end{align*}

\begin{figure}
\begin{center}
\includegraphics[width=8cm]{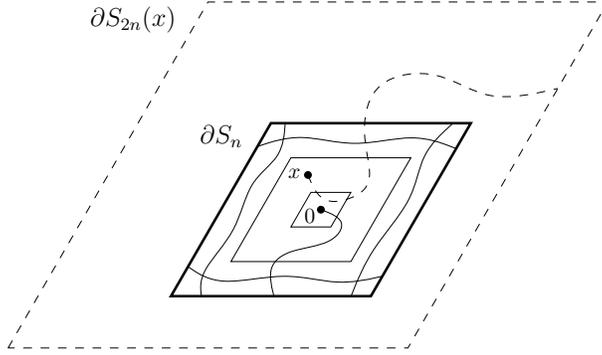}
\caption{With this construction, any site $x$ in $S_{n/3,2n/3}$ connected to a site at distance $2n$ is also connected to $0$ in $S_n$.}
\label{critical_estimate}
\end{center}
\end{figure}

Let us turn to the upper bound. We take a logarithmic division of $S_n$: define $k=k(n)$ so that $2^k < n \leq 2^{k+1}$, we have
\begin{equation}
\sum_{x \in S_n} \|x\|_{\infty}^t \PPP\big(0 \leadsto x\big) \leq C_1 + \sum_{j=3}^{k+1} \sum_{x \in S_{2^{j-1},2^j}} \|x\|_{\infty}^t \PPP\big(0 \leadsto x\big).
\end{equation}
Now for $x \in S_{2^{j-1},2^j}$, take the two boxes $S_{2^{j-2}}(0)$ and $S_{2^{j-2}}(x)$: since they are disjoint,
\begin{equation}
\PPP\big(0 \leadsto x\big) \leq \PPP\big(0 \leadsto S_{2^{j-2}}(0)\big) \PPP\big(x \leadsto S_{2^{j-2}}(x)\big),
\end{equation}
which is at most $C_2 \pi_1^2(2^{j-1})$ using the same arguments as before. Our sum is thus less than (since $|S_{2^{j-1},2^j}| \leq C_3 2^{2j}$)
\begin{equation}
\sum_{j=3}^{k+1} C_3 2^{2j} \times (2^j)^t \times (C_2 \pi_1^2(2^{j+1})) \leq C_4 2^{(2+t) k} \pi_1^2(2^k) \times \Bigg[ \sum_{j=3}^{k+1} 2^{(2+t) (j-k)} \frac{\pi_1^2(2^{j-1})}{\pi_1^2(2^k)} \Bigg].
\end{equation}
Now $2^{(2+t) k} \pi_1^2(2^k) \leq C_5 n^{2+t} \pi_1^2(n)$, and this yields the desired result, using as previously $\frac{\pi_1(2^j)}{\pi_1(2^k)} \leq C_6 \pi_1(2^j|2^k)^{-1} \leq C_6 2^{-(j-k)/2}$:
\begin{align*}
\sum_{j=3}^{k+1} 2^{(2+t) (j-k)} \frac{\pi_1^2(2^{j-1})}{\pi_1^2(2^k)} & \leq C_6 \sum_{j=3}^{k+1} 2^{(2+t) (j-k)} 2^{- (j-k)}\\
& \leq C_7 \sum_{l=0}^{k-3} 2^{-(1+t) l},
\end{align*}
and this sum is bounded by $\sum_{l=0}^{\infty} 2^{-(1+t) l} < \infty$.
\end{proof}

\subsubsection*{Main estimate}

The following lemma will allow us to link directly $\chi$ and $\xi$ with $L$. Roughly speaking, it relies on the fact that the sites at distance much larger than $L(p)$ from the origin have a negligible contribution, due to the exponential decay property, so that the sites in $S_{L(p)}$ produce a positive fraction of the total sum:

\begin{lemma} For any $t \geq 0$, we have
\begin{equation}
\sum_x \|x\|_{\infty}^t \PP_p\big(0 \leadsto x , |C(0)|<\infty\big) \asymp L(p)^{t+2} \pi_1^2(L(p))
\end{equation}
uniformly in $p$.
\end{lemma}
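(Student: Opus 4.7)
The plan is to split the sum at scale $L(p)$ and treat the two pieces separately. On $x \in S_{L(p)}$ the preceding lemma will give the correct order directly, and the tail $\|x\|_\infty > L(p)$ will be controlled using exponential decay beyond $L(p)$, yielding a contribution of at most $C L(p)^{t+2}\pi_1^2(L(p))$.

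For the inner portion, I would first use $\PP_p(0\leadsto x, |C(0)|<\infty) \leq \PP_p(0 \leadsto x)$ together with item 2 of the preceding lemma at $n = L(p)$ to obtain
$$\sum_{x \in S_{L(p)}} \|x\|_\infty^t \PP_p(0 \leadsto x, |C(0)|<\infty) \leq C L(p)^{t+2}\pi_1^2(L(p)).$$
For the matching lower bound I would distinguish two subcases. If $p \leq 1/2$ then $\PP_p(|C(0)|<\infty) = 1$ and the lower bound in the preceding lemma transfers verbatim. If $p > 1/2$, I would combine the construction from the proof of the preceding lemma --- giving $\PP_p(0 \leadsto^{S_{L(p)}} x) \gtrsim \pi_1^2(L(p))$ for $x$ in the middle annulus $S_{L(p)/3, 2L(p)/3}$ --- with the independent event that there is a white $\ast$-circuit in $S_{L(p), 2L(p)}$, whose probability is bounded below uniformly in $p$ by the RSW estimates for white crossings on scales $\leq L(p) = L(1-p)$. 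The two events use disjoint sets of sites, hence are independent, and together they force $0 \leadsto x$ and $C(0) \subseteq S_{L(p)}$ finite.

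For the tail I would prove the pointwise estimate
$$\PP_p(0 \leadsto x, |C(0)|<\infty) \leq C \pi_1^2(L(p)) \, e^{-c \|x\|_\infty/L(p)} \qquad (\|x\|_\infty > L(p)),$$
after which a geometric summation of $\|x\|_\infty^t e^{-c\|x\|_\infty/L(p)}$ over $\|x\|_\infty > L(p)$ yields the desired $O(L(p)^{t+2}\pi_1^2(L(p)))$ tail contribution. To obtain this pointwise bound I view the event as being witnessed by three pieces that rely on disjoint sites: a black arm from $0$ to $\partial S_{L(p)/3}$, a black arm from $x$ to $\partial S_{L(p)/3}(x)$, and (when $p>1/2$) a white $\ast$-circuit around $0$ of diameter at least $\|x\|_\infty$, which is forced by $|C(0)|<\infty$. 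A Reimer-type disjoint-occurrence bound, combined with Theorem \ref{armnear} applied to the two single-arm factors, yields the prefactor $\pi_1^2(L(p))$. The exponential factor would come from Lemma \ref{explem}: for $p\leq 1/2$ it applies to the arm extensions beyond $S_{L(p)}$ (via dyadic BK and quasi-multiplicativity, giving $\PP_p(0 \leadsto \partial S_n) \lesssim \pi_1(L(p)) e^{-cn/L(p)}$ for $n \geq L(p)$); for $p > 1/2$ the arms themselves do not decay, so the exponential factor must instead be extracted from the enclosing white circuit by applying Lemma \ref{explem} at parameter $1-p$, using $L(1-p) = L(p)$ and a geometric sum over dyadic annuli of size $\geq \|x\|_\infty/2$.

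The main obstacle is the super-critical case $p > 1/2$ in the tail: since $\PP_p(0 \leadsto \partial S_n)$ does \emph{not} vanish as $n \to \infty$, the required exponential decay cannot come from the black arms themselves and must be read off from the dual $\ast$-circuit witnessing finiteness of $C(0)$. The key conceptual step will be organizing the witness of $\{0\leadsto x, |C(0)|<\infty\}$ as a disjoint occurrence of a short-scale two-arm event (contributing the $\pi_1^2(L(p))$ prefactor via Theorem \ref{armnear}) and a large-scale white circuit event (contributing the exponential decay via Lemma \ref{explem} applied at $1-p$).
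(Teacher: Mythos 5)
Your proposal is correct and follows essentially the same strategy as the paper: the inner sum over $S_{L(p)}$ is handled by item 2 of the preceding lemma, the lower bound is forced by a white circuit in the annulus $S_{L(p),2L(p)}$ together with independence of the inner connection, and the tail is controlled by peeling off the two single-arm factors (giving $\pi_1^2(L(p))$ via Theorem \ref{armnear}) from a long-range event whose probability decays exponentially -- a sub-critical crossing when $p<1/2$, a super-critical white circuit separated off by FKG when $p>1/2$, exactly as in the paper. The only organizational difference is that the paper tiles the plane with translates of $S_{L(p)}$ and bounds each box's expected cluster count via item 1 of the preceding lemma, whereas you propose a pointwise estimate $\PP_p(0\leadsto x, |C(0)|<\infty)\lesssim\pi_1^2(L(p))\,e^{-c\|x\|_\infty/L(p)}$ and sum site by site; both lead to the same convergent geometric series, and FKG would suffice in place of the Reimer-type bound you invoke.
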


\begin{proof}

\noindent \textbf{Lower bound.}
The lower bound is a direct consequence of item \ref{it1}. above: indeed,
\begin{align*}
\sum_x \|x\|_{\infty}^t \PP_p\big(0 \leadsto x , |C(0)|<\infty\big) & \\
& \hspace{-2cm} \geq \PP_p(\text{$\exists$ white circuit in $S_{L,2L}$}) \sum_{x \in S_L} \|x\|_{\infty}^t \PP_p\big(0 \leadsto^{S_L} x\big) \\
& \hspace{-2cm} \geq \delta_4^4 \sum_{x \in S_L} \|x\|_{\infty}^t \PP_p\big(0 \leadsto^{S_L} x\big)
\end{align*}
by RSW, and item \ref{it1}. gives
$$\sum_{x \in S_L} \|x\|_{\infty}^t \PP_p\big(0 \leadsto^{S_L} x\big) \geq C L^{t+2} \pi_1^2(L).$$

\noindent \textbf{Upper bound.}
To get the upper bound, we cover the plane by translating $S_L$: we consider the family of rhombi $S_L(2 n_1 L, 2 n_2 L)$, for any two integers $n_1$ and $n_2$. By isolating the contribution of $S_L$, we get:
\begin{align*}
\sum_x \|x\|_{\infty}^t \PP_p\big(0 \leadsto x , |C(0)|<\infty\big) & \\
& \hspace{-4cm} \leq \sum_{x \in S_L} \|x\|_{\infty}^t \PP_p\big(0 \leadsto x , |C(0)|<\infty\big) \\
& \hspace{-3cm} + \sum_{(n_1,n_2) \neq (0,0)} \sum_{x \in S_L(2 n_1 L, 2 n_2 L)} \|x\|_{\infty}^t \PP_p\big(0 \leadsto x , |C(0)|<\infty\big).
\end{align*}

\begin{figure}
\begin{center}
\includegraphics[width=10cm]{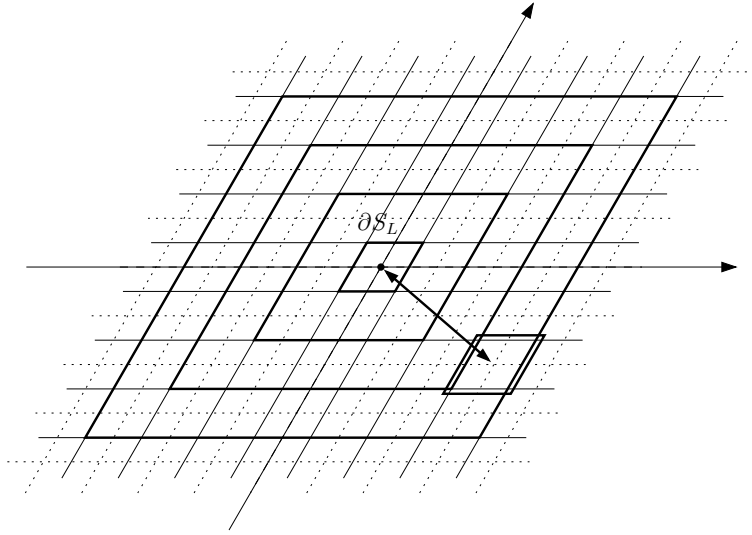}
\caption{For the upper bound, we cover the plane with rhombi of size $2L$ and sum their different contributions.}
\label{sum_rhombi}
\end{center}
\end{figure}

Using item \ref{it1}. above, we see that the rhombus $S_L$ gives a contribution
$$\sum_{x \in S_L} \|x\|_{\infty}^t \PP_p\big(0 \leadsto x , |C(0)|<\infty\big) \leq C L^{t+2} \pi_1^2(L),$$
which is of the right order of magnitude.

We now prove that each small rhombus outside of $S_L$ at distance $k L$ gives a contribution of order $\pi_1(L) \times L^t \times \EE_p \big[|x \in S_L : x \leadsto \partial S_L| \big] \asymp L^{t+2} \pi_1^2(L)$ (using item \ref{it2}.), multiplied by some quantity which decays exponentially fast in $k$ and will thus produce a series of finite sum. More precisely, if we regroup the rhombi into concentric annuli around $S_L$, we get that the previous summation is at most
\begin{align*}
\sum_{k=1}^{\infty} \sum_{\substack{(n_1,n_2)\\ \|(n_1,n_2)\|_{\infty}=k}} \sum_{x \in S_L(2 n_1 L, 2 n_2 L)} \|x\|_{\infty}^t \PP_p\big(0 \leadsto x , |C(0)|<\infty\big) & \\
& \hspace{-8cm} \leq \sum_{k=1}^{\infty} \sum_{\substack{(n_1,n_2)\\ \|(n_1,n_2)\|_{\infty}=k}} \sum_{x \in S_L(2 n_1 L, 2 n_2 L)} [(2k+1)L]^t \PP_p\big(0 \leadsto x , |C(0)|<\infty\big) & \\
& \hspace{-8cm} \leq \sum_{k=1}^{\infty} \sum_{\substack{(n_1,n_2)\\ \|(n_1,n_2)\|_{\infty}=k}} C' k^t L^t \: \EE_p \big[|C(0) \cap S_L(2 n_1 L, 2 n_2 L)| ; |C(0)|<\infty\big].
\end{align*}

Now we have to distinguish between the sub-critical and the super-critical cases: we are going to prove that in both cases,
$$\EE_p \big[|C(0) \cap S_L(2 n_1 L, 2 n_2 L)| ; |C(0)|<\infty\big] \leq C_1 L^2 \pi^2_1(L) e^{- C_2 k}$$
for some constants $C_1, C_2 >0$. When $p<1/2$, we will use that
\begin{equation}
\PP_p(\partial S_L \leadsto \partial S_{k L}) \leq C_3 e^{- C_4 k},
\end{equation}
which is a direct consequence of the exponential decay property Eq.(\ref{explem2}) for ``longer'' parallelograms. When $p>1/2$, we have an analog result, which can be deduced from the sub-critical case just like in the discrete case (just replace sites by translations of $S_L$):
\begin{align*}
\PP_p(\partial S_L \leadsto \partial S_{k L} , |C(0)| < \infty ) & \\
& \hspace{-4cm} \leq \PP_p(\text{$\exists$ white circuit surrounding a site on $\partial S_L$ and a site on $\partial S_{k L}$}) \\
& \hspace{-4cm} \leq C_5 e^{- C_6 k}.
\end{align*}

Assume first that $p<1/2$. By independence, we have ($\|(n_1,n_2)\|_{\infty} = k$)
\begin{align*}
\EE_p \big[|C(0) \cap S_L(2 n_1 L, 2 n_2 L)| ; |C(0)|<\infty\big] & \\
& \hspace{-5cm} \leq \PP_p (0 \leadsto \partial S_L) \EE_p \big[|x \in S_L(2 n_1 L, 2 n_2 L) : x \leadsto \partial S_L(2 n_1 L, 2 n_2 L)| \big] \\
& \hspace{-4cm} \times \PP_p (\partial S_L \leadsto \partial S_{(2k-1) L}) \\
& \hspace{-5cm} \leq \pi_1(L) \times (C L^2 \pi_1(L)) \times C'_3 e^{- C'_4 k}.
\end{align*}

If $p>1/2$, we write similarly (here we use FKG to separate the existence of a white circuit (decreasing) from the other terms (increasing), and then independence of the remaining terms)
\begin{align*}
\EE_p \big[|C(0) \cap S_L(2 n_1 L, 2 n_2 L)| ; |C(0)|<\infty\big] & \\
& \hspace{-6cm} \leq \PP_p (0 \leadsto \partial S_L) \EE_p \big[|x \in S_L(2 n_1 L, 2 n_2 L) : x \leadsto \partial S_L(2 n_1 L, 2 n_2 L)| \big] \\
& \hspace{-5.5cm} \times \PP_p(\text{$\exists$ white circuit surrounding a site on $\partial S_L$ and a site on $\partial S_{(2k-1) L}$}) \\
& \hspace{-6cm} \leq \pi_1(L) \times (C L^2 \pi_1(L)) \times C'_5 e^{- C'_6 k}.
\end{align*}

Since there are at most $C'' k$ rhombi at distance $k$ for some constant $C''$, the previous summation is in both cases less than
$$\sum_{k=1}^{\infty} C'' k \times C' k^t L^t \times C_1 L^2 \pi^2_1(L) e^{- C_2 k} \leq C''' \bigg(\sum_{k=1}^{\infty} k^{t+1} e^{- C_2 k}\bigg) L^{t+2} \pi^2_1(L),$$
which yields the desired upper bound, as $\sum_{k=1}^{\infty} k^{t+1} e^{- C_2 k} < \infty$.

\end{proof}

\subsubsection*{Critical exponents for $\chi$ and $\xi$}

The previous lemma reads for $t=0$:
\begin{proposition} We have
\begin{equation}
\chi(p) = \EE_p \big[|C(0)| ; |C(0)|<\infty\big] \asymp L(p)^2 \pi_1^2(L(p)).
\end{equation}
\end{proposition}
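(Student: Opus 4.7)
The plan is to observe that this proposition is simply the specialization $t=0$ of the preceding lemma, once $\chi(p)$ is rewritten as a sum of connection probabilities. The only genuine step is the identification of $\chi(p)$ with $\sum_x \PP_p(0 \leadsto x, |C(0)| < \infty)$, after which the asymptotic follows immediately.

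First, I would use linearity of expectation (Fubini / Tonelli for the non-negative integrand $\mathbbm{1}_{0 \leadsto x} \mathbbm{1}_{|C(0)| < \infty}$) to write
\begin{equation*}
\chi(p) = \EE_p\Big[ \sum_x \mathbbm{1}_{0 \leadsto x} \mathbbm{1}_{|C(0)| < \infty} \Big] = \sum_x \PP_p\big(0 \leadsto x, |C(0)| < \infty\big).
\end{equation*}
The indicator $\mathbbm{1}_{|C(0)| < \infty}$ can be pulled inside the sum because if $|C(0)| < \infty$, the sum simply counts the (finite) cluster of $0$, and if $|C(0)| = \infty$, both sides vanish on that event.

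Next, I would apply the previous lemma with $t = 0$ (so that the weight $\|x\|_\infty^t$ becomes $1$), giving
\begin{equation*}
\sum_x \PP_p\big(0 \leadsto x, |C(0)| < \infty\big) \asymp L(p)^{0+2} \pi_1^2(L(p)) = L(p)^2 \pi_1^2(L(p))
\end{equation*}
uniformly in $p$. Combining the two displays yields the claim.

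There is no real obstacle: the whole content has been packed into the preceding lemma, whose proof handled both the sub-critical lower bound (via an interior RSW construction producing a white circuit in $S_{L,2L}$ and the item \ref{it1}-type estimate on $\sum_x \PP_p(0 \leadsto^{S_L} x)$) and the upper bound (via a covering of the plane by translates of $S_L$, with the contribution of each far rhombus controlled by the uniform exponential decay of Lemma \ref{explem} together with $\EE_p[|x \in S_L : x \leadsto \partial S_L|] \asymp L^2 \pi_1(L)$). The one small thing to be slightly careful about in writing the chain of equalities is the interchange of sum and expectation, but this is immediate since we are integrating a non-negative quantity.
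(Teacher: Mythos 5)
Your proposal is correct and takes essentially the same route as the paper: the paper also obtains this proposition simply by specializing the preceding lemma to $t=0$, after the elementary identification $\chi(p) = \sum_x \PP_p(0 \leadsto x,\, |C(0)|<\infty)$ via linearity of expectation. Nothing more is needed, and your care about the sum--expectation interchange (justified by non-negativity) is appropriate though the paper does not spell it out.
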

In other words, ``$\chi(p) \asymp \chi^{\text{near}}(p)$''. It provides the critical exponent for $\chi$:
\begin{equation}
\chi(p) \approx L(p)^2 \big[L(p)^{-5/48}\big]^2 \approx \big[|p-1/2|^{-4/3}\big]^{86/48} \approx |p-1/2|^{-43/18}.
\end{equation}

Recall that $\xi$ was defined via the formula
$$\xi(p) = \Bigg[ \frac{1}{\EE_p \big[|C(0)| ; |C(0)|<\infty\big]} \sum_x \|x\|_{\infty}^2 \PP_p\big(0 \leadsto x , |C(0)|<\infty\big) \Bigg]^{1/2}.$$
Using the last proposition and the lemma for $t=2$, we get
\begin{equation}
\xi(p) \asymp \Bigg[ \frac{L(p)^4 \pi_1^2(L(p))}{L(p)^2 \pi_1^2(L(p))} \Bigg]^{1/2} = L(p).
\end{equation}
We thus obtain the following proposition, announced in Section \ref{different}:
\begin{proposition}
We have
\begin{equation}
\xi(p) \asymp L(p).
\end{equation}
\end{proposition}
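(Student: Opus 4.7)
The plan is to simply plug the preceding lemma into the defining formula for $\xi(p)$, since all the heavy lifting is already encoded in that lemma.

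First I would apply the preceding lemma with $t=0$ (which is exactly the content of the proposition on $\chi(p)$ stated just above) to obtain the denominator
\begin{equation*}
\EE_p \big[|C(0)| ; |C(0)|<\infty\big] = \sum_x \PP_p\big(0 \leadsto x , |C(0)|<\infty\big) \asymp L(p)^2 \pi_1^2(L(p)),
\end{equation*}
uniformly in $p$. Next, applying the same lemma with $t=2$, one gets for the numerator
\begin{equation*}
\sum_x \|x\|_{\infty}^2 \PP_p\big(0 \leadsto x , |C(0)|<\infty\big) \asymp L(p)^4 \pi_1^2(L(p)),
\end{equation*}
again uniformly in $p$. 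Taking the ratio, the factor $\pi_1^2(L(p))$ cancels between numerator and denominator, so
\begin{equation*}
\xi(p)^2 \asymp \frac{L(p)^4 \pi_1^2(L(p))}{L(p)^2 \pi_1^2(L(p))} = L(p)^2,
\end{equation*}
and taking square roots yields $\xi(p) \asymp L(p)$.

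There is no genuine obstacle at this stage: the whole difficulty has already been absorbed into the moment lemma (whose proof relies on RSW, exponential decay beyond scale $L(p)$, and the critical-percolation-style estimates $\sum_{x \in S_n} \|x\|_\infty^t \PPP(0 \leadsto x) \asymp n^{t+2} \pi_1^2(n)$, combined via a cover of the plane by translates of $S_{L(p)}$). The only point worth underlining is that the cancellation of $\pi_1^2(L(p))$ is crucial: it is what makes $\xi$ behave like $L$ rather than like a power of $L$ weighted by a $1$-arm probability, and it is the reason why no additional a-priori control on $\pi_1(L(p))$ is required here. From this, combined with the already established $L(p) \approx |p-1/2|^{-4/3}$, one immediately recovers the critical exponent $\nu = 4/3$ for $\xi$.
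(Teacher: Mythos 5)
Your proof is correct and follows exactly the same route as the paper: apply the moment lemma with $t=0$ and $t=2$ to the numerator and denominator in the definition of $\xi(p)$, cancel $\pi_1^2(L(p))$, and take square roots. Nothing further is needed.
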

This implies in particular that
\begin{equation}
\xi(p) \approx |p-1/2|^{-4/3}.
\end{equation}

\section{Concluding remarks} \label{sec_remarks}

\subsection{Other lattices}

Most of the results presented here (the separation of arms, the theorem concerning arm events on a scale $L(p)$, the ``universal'' arm exponents, the relations between the different characteristic functions, etc.) come from RSW considerations or the exponential decay property, and remain true on other regular lattices like the square lattice. The triangular lattice has a property of self-duality which makes life easier, in general we have to consider the original lattice together with the matching lattice (obtained by ``filling'' each face with a complete graph): instead of black or white connections, we thus talk about primal and dual connections. We can also handle bond percolation in this way. We refer the reader to the original paper of Kesten \cite{Ke4} for more details, where results are proved in this more general setting. The only obstruction to get the critical exponents is actually the derivation of the arm exponents at the critical point $p=p_c$ (besides only two exponents are needed, for $1$ arm and $4$ alternating arms).

Consider site percolation on $\mathbb{Z}^2$ for instance. We know that $0 < \alpha_j,\alpha'_j,\beta_j < \infty$ for any $j \geq 1$. Hence the a-priori estimate
$$\mathbb{P}_{p_c}(0 \leadsto^{4,\sigma_4} \partial S_N) \geq N^{-2+\alpha}$$
for some $\alpha >0$, coming from the $5$-arm exponent, remains true: $\alpha_4 < 2$ (and in the same way $\alpha_6 > 2$). Combined with Proposition \ref{L_exp}, this leads to the weaker but nonetheless interesting statement
\begin{equation}
L(p) \leq |p - p_c|^{-A}
\end{equation}
for some $A >0$. Hence $\nu < \infty$, and then $\gamma < \infty$ (if these exponents exist). Using $\alpha_1 < \infty$, we also get $\beta < \infty$.

If we use a RSW construction in a box, we can make appear $3$-arm sites on the lowest crossing and deduce that $\alpha_1 \leq 1/3$. Here are rigorous bounds for the critical exponents in two dimensions:
$$\begin{array}{c|c}
\quad \text{triangular lattice} \quad & \quad \text{general rigorous bounds} \quad\\[2mm]
\hline\\
\quad \beta = 5/36 \quad & \quad 0 < \beta <1 \quad\\[1mm]
\quad \gamma = 43/18 \quad & \quad 8/5 \leq \gamma < \infty \quad\\[1mm]
\quad \nu = 4/3 \quad & \quad 1 < \nu < \infty \quad\\[1mm]
\end{array}$$
For more details, the reader can consult \cite{Ke4} and the references therein.

\subsection{Some related issues}

For the sake of completeness, let us just mention finally that the way the correlation length $L$ was defined also allows to use directly the compactness results of \cite{AB}. Indeed, the a-priori estimates on arm events coming from RSW considerations are exactly the hypothesis (H1) of this paper. This hypothesis entails that the curve cannot cross too many times any annulus, and thus cannot be too ``intricate'': this is Theorem 1, asserting the existence of Hölder parametrizations with high probability.

This regularity property then implies tightness, using (a version of) Arzela-Ascoli's theorem for continuous functions on a compact subset of the plane. We can thus show in this way the existence of scaling limits for near-critical percolation interfaces.

As a conclusion, let us also mention that the techniques presented here are important to study various models related to the critical regime, for instance Incipient Infinite Clusters \cite{CCD,J}, Dynamical Percolation \cite{ScSt}, Gradient Percolation \cite{N1}\ldots

\vfill

\section*{Acknowledgements}

I would like to thank W. Werner for suggesting me to look at these questions. He also had an important role in reading various intermediate versions of this paper. I also enjoyed many enlightening discussions with various people, including V. Beffara, J. van den Berg, F. Camia, L. Chayes, C. Garban and O. Schramm.

\newpage

\end{document}